\newtheorem{thm}{Theorem}[section] 
\newtheorem{pro}[thm]{Proposition}  
\newtheorem{cor}[thm]{Corollary}    
\newtheorem{lem}[thm]{Lemma}        
\theoremstyle{definition}           
\newtheorem{rem}[thm]{Remark}       %%%%% the counter [thm] is optional 
\newcommand{\NI}{\noindent}
\newcommand{\bea}{\begin{eqnarray}}
\newcommand{\eea}{\end{eqnarray}}
\newcommand{\dsp}{\displaystyle}
\def \b #1 {\bf #1}
\newcommand{\IR}{\mathbb{R}}
\newcommand{\IM}{\mathbb{M}}
\newcommand{\IE}{\mathbb{E}}
\newcommand{\IC}{\mathbb{C}}
\newcommand{\ID}{\mathbb{D}}
\newcommand{\IT}{\mathbb{T}}
\newcommand{\IZ}{\mathbb{Z}}
\newcommand{\IP}{\mathbb{P}}
\newcommand{\cal}{\mathcal}
\newcommand{\clk}{{\cal K}}
\newcommand{\cla}{{\cal A}}
\newcommand{\clm}{{\cal M}}
\newcommand{\cli}{{\cal I}}
\newcommand{\cls}{{\cal S}}
\newcommand{\clf}{{\cal F}}
\newcommand{\clh}{{\cal H}}
\newcommand{\clp}{{\cal P}}
\newcommand{\clo}{{\cal O}}
\newcommand{\clb}{{\cal B}}
\newcommand{\clj}{{\cal J}}
\newcommand{\cln}{{\cal N}}
\newcommand{\cld}{{\cal D}}
\newcommand{\clc}{{\cal C}}
\newcommand{\raro}{\rightarrow}
\newcommand{\vsp}{\vskip 1em}
\newcommand{\ul}{\underline}
\newcommand{\be}{\begin{equation}}
\newcommand{\ee}{\end{equation}}
\newcommand{\ben}{\begin{eqnarray*}}
\newcommand{\een}{\end{eqnarray*}}
\begin{document}

\title{Isomorphism theorem for Kolmogorov states of $\IM=\dsp{\otimes_{n \in \IZ}}\!M^{(n)}_d(\IC)$}

\author{ Anilesh Mohari }

\address{ The Institute of Mathematical Sciences, CIT Campus, Taramani, Chennai-600113 }

\email{anilesh@imsc.res.in}

\keywords{Uniformly hyperfinite factors, Kolmogorov's property, Mackey's imprimitivity system, CAR algebra, Quantum Spin Chain, Simple $C^*$ algebra, Tomita-Takesaki theory, norm one projection }

\subjclass{46L}

\thanks{ The author gratefully acknowledge and thanks the reviewers for their constructive comments which helped him to include the finer details of the proof in order to make 
the present paper more accessible. }

\begin{abstract}
We consider the translation dynamics on the $C^*$-algebra $\IM =\otimes_{n \in \IZ}\!M^{(n)}(\IC)$ of two sided infinite tensor product of $d$ dimensional matrices 
$\!M^{(n)}(\IC)=\!M_d(\IC)$ over the field of complex numbers $\IC$ and its restriction
to the maximal abelian $C^*$ sub-algebra $\ID^e = \otimes_{n \in \IZ }\!D_e^{(n)}(\IC)$ of
$\IM$, where each $\!D_e^{(n)}(\IC)=\!D_d(\IC)$ is the algebra of $d$ dimensional diagonal matrices with respect to an orthonormal basis $e=(e_i)$ of $\IC^d$. We prove that any two translation invariant Kolmogorov pure states of $\IM$ give unitarily equivalent dynamics in their Gelfand-Naimark-Segal spaces. Furthermore, for a class of 
Kolmogorov pure states of $\IM$ satisfying some additional invariant, we prove Kolmogorov states give isomorphic translation dynamics if their restrictions to the maximal abelian $C^*$ sub-algebra $\ID^e$ of $\IM$ are isomorphic. 

\vsp 
On the other hand, we prove that the translation dynamics with two infinite tensor product translation invariant faithful states of $\IM$ are isomorphic if and only if their mean entropies are equal.           
\end{abstract}

\maketitle 

\section{ Introduction }

\vsp 
A stationary finite Markov chain gives a translation invariant Markov state on a two sided classical spin chain. The set of translation invariant Markov states is a closed subset in the set of all translation invariant states on the two sided classical spin chain. One celebrated result in ergodic theory says that two such translation invariant states with positive Kolmogorov-Sinai dynamical [CFS] entropies are isomorphic [Or70] if and only if their dynamical entropies are equal. This classification program was primarily formulated [CFS] with a motivation to classify the symbolic dynamics [Si] associated with automorphic systems of classical Hamiltonian dynamics with Kolmogorov property. Though the set of ergodic states is dense in the set of two sided translation invariant states, all ergodic states need not be Markov states. In other words, all symbolic dynamics are not associated with stationary Markov chains [Or73] and a complete classification of automorphic systems remains incomplete in the most general mathematical set up of classical dynamical systems. Nevertheless these results found profound use in classical ergodic theory in special situations of paramount importance and remains an active area of research over the last few decades due to its diverse applications to other areas of mathematics [CFS].    

\vsp 
In this paper, we set a program aiming towards a classification of translation invariant states of the two sided quantum spin chain motivated to understand properties of translation invariant factor and pure states that appear naturally as temperature and ground states of quantum mechanical Hamiltonian systems [BRII,Sim]. Though our results are similar in spirit with results in classical ergodic theory, our technique and motivations are quite different. Before we specialize to two sided quantum spin chain, we recall some standard terminologies and notations used in operator algebras [Ta2]. We will also recall the classical situation of our present problem in some details in the following for a comparison with its quantum counter part. 

\vsp 
Let $\clb$ be a $C^*$-algebra over the field of complex numbers $\IC$. A linear functional $\omega:\clb \raro \IC$ is called a {\it state} on $\clb$ if it is {\it positive} i.e. $\omega(x^*x) \ge 0$ for all $x \in \clb$ and {\it unital} i.e. $\omega(I)=1$, where $I$ is the unit element of $\clb$. The convex set $\clb^*_{+,1}$ of states on $\clb$ is compact in weak$^*$ topology of the dual Banach space $\clb^*$ of $\clb$. A state $\omega$ of $\clb$ is called {\it pure} if the state can not be expressed as a convex combination of two different states i.e. if $\omega$ is an extremal element in $\clb^*_{+,1}$.  

\vsp 
Let $\clb_1,\clb_2$ be two unital $C^*$-algebras [BR,Ta2] over the field of complex numbers $\IC$. A unital linear map $\pi: \clb_1 \raro \clb_2$ is called {\it homomorphism} if 
\be 
\pi(x)^*=\pi(x^*)\;\;\mbox{and}\;\; \pi(xy)=\pi(x)\pi(y)
\ee 
for all $x,y \in \clb_1$. An injective homomorphism $\beta:\clb_1 \raro \clb_2$ is called 
{\it endomorphism}. For an unital $C^*$ algebra $\clb$, a linear bijective map $\theta: \clb \raro \clb$ is called {\it automorphism} if the map is a homomorphism. 

\vsp 
For an unital $C^*$ algebra $\clb$, a state $\omega$ of $\clb$ is called {\it invariant} 
for an automorphism $\theta: \clb \raro \clb$ if $\omega = \omega \theta$. A triplet $(\clb,\theta,\omega)$ is called a unital $C^*$-{\it dynamical system} if $\clb$ is a unital $C^*$-algebra and $\theta:\clb \raro \clb$ is an automorphism preserving a state $\omega$ of $\clb$. For a given automorphism $\theta$ on a unital $C^*$-algebra $\clb$, the set of invariant states 
$$\cls^{\theta}=\{ \omega \in \clb^*_{+,1}: \omega = \omega  \theta \}$$ 
of $\theta$ is a non empty compact set in the weak$^*$ topology of the dual Banach space $\clb^*$. An extremal element in the convex set $\cls^{\theta}$ is called {\it ergodic } state for $\theta$. An invariant state $\omega$ of $\theta$ is ergodic if and only if 
\be 
{1 \over 2n+1} \sum_{-n \le k \le n} \omega(y\theta^k(x)z) \raro \omega(yz)\omega(x)
\ee
as $n \raro \infty$ for all $x,y,z \in \clb$. 

\vsp
Let $(\clh_{\omega},\pi_{\omega},\zeta_{\omega})$ be the Gelfand-Naimark-Segal (GNS) space associated with a state $\omega$ of $\clb$, where $\pi_{\omega}:\clb \raro \clb(\clh_{\omega})$ is a $*$-representation of $\clb$ and $\zeta_{\omega}$ is the cyclic vector for $\pi(\cla)$ in $\clh_{\omega}$ such that 
$$\omega(x)= \langle \zeta_{\omega},\pi_{\omega}(x)\zeta_{\omega} \rangle$$
Let $\pi_{\omega}(\clb)'$ be the {\it commutant} of $\pi_{\omega}(\clb)$ i.e. $\pi_{\omega}(\clb)'=\{x \in \clb(\clh_{\omega}): xy=yx,\;\forall y \in \pi_{\omega}(\clb) \}$ and $\pi_{\omega}(\clb)''$ be the {\it double commutant} of $\pi_{\omega}(\clb)$ i.e.
$\pi_{\omega}(\clb)''= \{ x \in \clb(\clh_{\omega}): xy=yx\;\;\forall y \in \pi_{\omega}(\clb)' \}$. By a celebrated theorem of von-Neumann, $\pi_{\omega}(\clb)''$ is the weak$^*$ completion of $\pi_{\omega}(\clb)$ in $\clb(\clh_{\omega})$ and it admits a pre-dual Banach space. The pre-dual Banach space is often called {\it normal functional} on $\pi_{\omega}(\clb)''$. A state $\omega$ of $\clb$ is pure if and only if $\pi_{\omega}(\clb)''=\clb(\clh_{\omega})$, the algebra of all bounded operators on $\clh_{\omega}$.

\vsp 
Given a unital $C^*$-dynamical system $(\clb,\theta,\omega)$, we have a unitary operator $S_{\omega}:\clh_{\omega} \raro \clh_{\omega}$ extending the following inner product preserving map
\be 
S_{\omega} \pi_{\omega}(x)\zeta_{\omega}=\pi_{\omega}(\theta(x))\zeta_{\omega},\;x \in \clb
\ee 
and an automorphism $\Theta_{\omega}:\pi_{\omega}(\clb)'' \raro \pi_{\omega}(\clb)''$, defined by 
\be 
\Theta_{\omega}(X)=S_{\omega}XS^*_{\omega},\;X \in \pi_{\omega}(\clb)''
\ee 
Thus we have  
$$\Theta_{\omega}(\pi_{\omega}(x)) = \pi_{\omega}(\theta(x)),\;x \in \clb$$
Furthermore, $\omega$ is ergodic for $\theta$ if and only if 
$$\{ f: S_{\omega}f=f,\; f \in \clh_{\omega} \} = \{z \zeta_{\omega}: z \in \IC \}$$ 
Though there is no direct meaning in the non commutative framework for `an individual ergodic theorem' [Pa] of G. D. Birkhoff for automorphism $\Theta_{\omega}$ on the von-Neumann algebra $\pi_{\omega}(\clb)''$ with a normal invariant vector state given by the unit vector $\zeta_{\omega}$ on $\pi_{\omega}(\clb)''$, it has a non commutative generalization in a more general sense [La]. 

\vsp 
An invariant state $\omega$ for $\theta$ is called {\it strongly mixing} if
\be 
\omega(x\theta^n(y)) \raro \omega(x)\omega(y)
\ee
as $|n| \raro \infty $ for all $x,y \in \clb$. A strongly mixing state is obviously ergodic, however the converse is false. A simple application of Riemann-Lebesgue lemma says that absolute continuous spectrum of $S_{\omega}$ in the orthogonal complement of invariant vector in $\clh_{\omega}$ of $S_{\omega}$ is sufficient for strong mixing property [Pa]. It is not known yet, whether converse is true. In other words, no simple criteria on $\omega$ is known yet for strongly mixing. 

\vsp 
In case, $\clb$ is a unital commutative $C^*$-algebra, then $\clb$ is isomorphic to $C(X)$, where $C(X)$ is the algebra of complex valued continuous functions on a compact Hausdorff space $X$. An automorphism $\theta$ on $C(X)$ determines a unique one to one and onto continuous point map $\gamma_{\theta}:X \raro X$ such that  $\theta(f)=f \circ \gamma_{\theta}$, for all $f \in C(X)$. A state $\omega$ on $\clb \equiv C(X)$ is determined uniquely by a regular probability measure $\mu_{\omega}$ on $X$ by $\omega(f)=\int f d \mu_{\omega}$ and its associated GNS space $\clh_{\omega}=L^2(X,\mu_{\omega})$ with representation $\pi_{\omega}(h)f=hf$ for all $f \in C(X)$ with $\pi_{\omega}(X)''=L^{\infty}(X,\clf_X,\mu_{\omega})$, where $\clf_X$ is the Borel $\sigma$-field of $X$. Thus ergodic and strongly mixing properties introduced in the non commutative framework of $C^*$ algebras are in harmony with its classical counter parts and coincide once one restricts them to commutative $C^*$-algebras. 

\vsp 
Two unital $C^*$-dynamical systems $(\clb_1,\theta_1,\omega_1)$ and $(\clb_2,\theta_2,\omega_2)$ are said to be {\it isomorphic } if there exists a $C^*$ automorphism $\beta:\clb_1 \raro \clb_2$ such that 
\be 
\theta_2  \beta = \beta   \theta_1,\;\;\omega_2  \beta=\omega_1
\ee 
on $\clb_1$. It is clear that ergodic and strong mixing properties remain covariant with respect to the isomorphism. One of the central problem in classical ergodic theory is aimed to classify classical dynamical system of automorphisms upto the isomorphism. We replace automorphisms $(\theta_k:k=1,2)$ by unital endomorphisms in these definitions to include more general $C^*$-dynamical systems of $*$-endomorphisms. 

\vsp
A state $\omega$ of $\clb$ is called {\it factor} if $\pi_{\omega}(\clb)''$ is a factor i.e. if the centre $\pi_{\omega}(\clb)'' \bigcap \pi_{\omega}(\clb)'$ is equal to $\{ z \pi_{\omega}(I):z \in \IC \}$. An automorphism of $\clb$ takes a factor state to another factor state. Apart from ergodic and strong mixing properties, factor property is also an invariant under the isomorphism of two dynamics. For a commutative $C^*$-algebra $\clb$, the centre of $\pi_{\omega}(\clb)''$ is itself and thus $\omega$ is a factor state if and only if $\clh_{\omega}$ is one dimensional. In other words, $\omega$ is a Dirac measure on a point of $X$, where $\clb \equiv C(X)$. Such a state plays no interest in classical dynamical system since invariance property for the automorphism ensures that $\gamma_{\theta}$ has a fixed point in $X$.   

\vsp 
We introduce now one more invariant in the general set up of $C^*$-dynamical system. 
For a family of $C^*$ sub-algebras $(\clb_i:i \in \cli)$ of $\clb$, we use the 
notation $\vee_{i \in \cli} \clb_i $ for the $C^*$ algebra generated by the family 
$(\clb_i: i \in \cli)$. For a $C^*$ sub-algebra $\clb_0$ of $\clb$, we set $\clb'_0 
= \{ x \in \clb: xy=yx,\; \forall y \in \clb_0 \}$ and $\clb_0''= \{ x \in \clb: xy=yx,\; \forall y \in \clb_0' \}$. A $C^*$-dynamical system $(\clb,\theta,\omega)$ is said to have {\it Kolmogorov property} if there exists a $C^*$ sub-algebra $\clb_0$ of $\clb$ such that the following hold:

\vsp 
\NI (a) $\clb_0''=\clb_0,\;\;\theta^{-1}(\clb_0) \subseteq \clb_0$;

\vsp 
\NI (b) $\vee_{n \in \IZ} \theta^n(\clb_0) = \clb$; 

\vsp
\NI (c) $\bigcap_{n \in \IZ} \theta^{-n}(\clb_0)=\{z I: z \in \IC \}.$ 

\vsp 
\NI (d) For each $n \in \IZ$, let $F^{\omega}_{n]}$ be the projection $[\pi_{\omega}(\theta^n(\clb_0))\zeta_{\omega}]$ in $\clh_{\omega}$. Then  
$$F^{\omega}_{n]}\;\;\downarrow \;\;|\zeta_{\omega} \rangle \langle \zeta_{\omega}|$$
in strong operator topology as $n \raro -\infty$. In short, such an element $\omega \in \cls^{\theta}(\clb)$ is called {\it Kolmogorov state}. When a fixed state $\omega \in \cls^{\theta}(\clb)$ is under consideration, we omit the superscript $\omega$ 
in the notation $F^{\omega}_{n]}$ and denote by $F_{n]}$ for each $n \in \IZ$. 

\vsp 
The relations (a)-(c) are state independent. In particular, for any translation invariant state $\omega$ of $\clb$, properties (a) and (b) ensure that 
$$F_{n]} \uparrow I_{\clh_{\omega}}$$ as $n \uparrow \infty$. Furthermore, $F_{n]} \downarrow F_{-\infty]}$ as $n \downarrow -\infty$ in strong operator topology for some projection $F_{-\infty]} \ge |\zeta_{\omega} \rangle \langle \zeta_{\omega}|$. However, in general $F_{-\infty]}$ need not be equal to $|\zeta_{\omega} \rangle \langle \zeta_{\omega}|$ even when (c) is true. Thus the property (d) is crucial to determine Kolmogorov property of the state $\omega$. It is clear that Kolmogorov property is an invariant for the dynamics $(\clb,\theta,\omega)$. 

\vsp 
For a Kolmogorov state $\omega$ and any $x,y \in \clb_0$, we also have 
$$|\omega(x\theta^n(y)|$$
$$=|<x^*\zeta_{\omega}, F_{n]} \theta^n(y) \zeta_{\omega}>|$$
$$\le ||F_{n]}x^*\zeta_{\omega}||\;||\theta^n(y)\zeta_{\omega}||$$
$$\le ||F_{n]}x^*\zeta_{\omega} || \; ||y||$$ 
$$ \raro 0$$
as $n \raro -\infty$ once $\omega(x)=0$. Thus by linear property of the map $\theta$, strong mixing property (5) holds for all $x,y \in \clb_0$. Going along the same line of the proof, we also verify (5) for all $x,y \in \theta^{-m}(\clb_0),\; m \ge 1$. Since
$\cup_{m \ge 1} \theta^{-m}(\clb_0)$ is norm dense in $\clb$, a standard density argument gives (5) for all $x,y \in \clb$. In other words, Kolmogorov states are 
strongly mixing. By the argument used above, for each $x \in \clb$ we have
\be 
\mbox{sup}_{\{y:||y|| \le 1 \}} |\omega(x \theta^n(y))-\omega(x)\omega(y)| \raro 0
\ee
as $n \raro -\infty$.       

\vsp 
The $C^*$ dynamical systems $(\clb,\theta,\omega)$ is said to have {\it backward Kolmogorov property } if $(\clb,\theta^{-1},\omega)$ admits Kolmogorov property.  
Clearly the backward Kolmogorov property is an invariant as well. A theorem of 
Rokhlin-Sinai [Pa] says that the Kolmogorov property for classical dynamical systems 
is equivalent to strictly positive Kolmogorov-Sinai dynamical entropy. Since Kolmogorov-Sinai dynamical entropies are equal for $(\clb,\theta,\omega)$ and $(\clb,\theta^{-1},\omega)$, Kolmogorov property is time-reversible. However, in non commutative framework, such a time reversal property [Mo2] for a Kolmogorov state is not clear even though relations (a)-(c) hold for $(\clb,\theta^{-1},\omega)$ with $\theta$ and $\clb_0'$ replacing $\theta^{-1}$ and $\clb_0$ respectively.  

\vsp 
A vector subspace $\cli$ of a $C^*$-algebra $\clb$ is called {\it ideal} or {\it two sided ideal } of $\clb$ if 

\NI (a) $\cli$ is closed under conjugation i.e. $x^* \in \cli$ if $x \in \cli$;

\NI (b) $xy,yx \in \cli$ for all $x \in \cli$ and $y \in \clb$. 

\vsp 
A $C^*$ algebra $\clb$ is called {\it simple} if $\clb$ has no proper ideal i.e. other then $\clb$ or $\{0\}$. 

\vsp 
For a simple $C^*$ algebra $\clb$, any non trivial $*$-homomorphism $\beta:\clb \raro \clb$ is injective since the null space $\cln=\{x \in \clb_1: \beta(x)=0 \}$ is a two sided ideal. Thus $x \raro ||\beta(x)||$ is a $C^*$ norm on $\clb$. Since $C^*$ norm is unique on a $C^*$ algebra with a given involution by Gelfand spectral theorem [BR1], we get 
$$||\beta(x)||=||x||$$ 
for all $x \in \clb$. In particular, a homomorphism $\beta:\clb \raro \clb$ is an automorphism for a simple $C^*$ algebra $\clb$ if the homomorphism $\beta$ is onto.     

\vsp 
In the following, we will investigate this abstract notion of Kolmogorov property in more details in the context of one lattice dimension two sided quantum spin chains studied in a series of papers [Mo1],[Mo2] and [Mo4]. We also find its relation with the well known Kolmogorov property in classical dynamical systems [Pa].

\vsp 
In the following, we describe $C^*$ algebraic set up valid for quantum spin chain 
[BR vol-II,Ru] and find its relation to classical spin chain [Pa] in details. Let $\IM=\otimes_{n \in \IZ} \!M^{(n)}_d(\IC)$ be the $C^*$ -completion of the infinite tensor product of the algebra $\!M_d(\IC)$ of $d$ by $d$ matrices over the field of complex numbers. Let $Q$ be a matrix in $\!M_d(\IC)$. By $Q^{(n)}$ we denote the element $...\otimes I_d \otimes I_d \otimes I_d \otimes Q \otimes I_d \otimes I_d \otimes I_d \otimes ... $, where $Q$ appears in the $n$-th component in the tensor product and $I_d$ is the identity matrix of $\!M_d(\IC)$. Given a subset $\Lambda$ of $\IZ$, $\IM_{\Lambda}$ is defined to be the $C^*$-sub-algebra of $\IM$ generated by elements $Q^{(n)}$ with $Q \in \!M_d(\IC)$, $n \in \Lambda$. The $C^*$ $\IM$ being the inductive limit of increasing matrix algebras, it is a simple $C^*$-algebra [ChE],[SS]. 

\vsp 
We also set $$\IM_{loc}= \bigcup_{\Lambda:|\Lambda| < \infty } \IM_{\Lambda},$$
where $|\Lambda|$ is the cardinality of $\Lambda$. An automorphism $\beta$ on $\IM$ is called {\it local } if $\beta(\IM_{loc}) \subseteq \IM_{loc}$. Right translation $\theta$ is a local automorphism of $\IM$ defined by $\theta(Q^{(n)})=Q^{(n+1)}$. We also simplified notations $\IM_{R}=\IM_{[1,\infty)}$ and $\IM_{L}=\IM_{(-\infty,0]}$. Thus restriction of $\theta$ ( $\theta^{-1}$ ), $\theta_R$ (and $\theta_L$) is a unital $*$-endomorphisms on $\IM_R$ ($\IM_L$). 

\vsp 
We say a state $\omega$ of $\IM$ is {\it translation invariant} if $\omega  \theta = \omega$ on $\IM$. The restriction of $\omega$ to $\IM_{\Lambda}$ is denoted by $\omega_{\Lambda}$. We also set $\omega_{R}=\omega_{[1,\infty)}$ and $\omega_{L}=\omega_{(-\infty,0]}$. In such a case $(\IM_R,\theta_R,\omega_{R})$ and $(\IM_L,\theta_L,\omega_{L})$ are two unital dynamical systems of $*$-endomorphisms. 
In this paper, we are interested to deal with $C^*$ dynamical systems $(\IM,\theta,\omega)$ and its restriction to sub-algebras $(\IM_R,\theta_R,\omega_R)$. If $(\IM_R,\theta_R,\omega_R)$ and $(\IM_R,\theta_R,\omega'_R)$ are isomorphic with an intertwining automorphism $\beta_R:\IM_R \raro \IM_R$, i.e. $\theta_R  \beta_R = \beta_R  \theta_R$ and $\omega'_R=\omega_R  \beta$, then $(\IM,\theta,\omega)$ and $(\IM,\theta,\omega')$ are isomorphic with intertwining automorphism $\beta:\IM \raro \IM$, where $\beta$ is the inductive limit automorphism of $\beta_R:\IM_R \raro \IM_R$ determined by the universal property of inductive limit of $C^*$ algebras [Sak]. However the converse question is more delicate even when $\omega$ and $\omega'$ are two pure states of $\IM$ since their restrictions to $\IM_R$ need not be isomorphic [Mo4]. 

\vsp 
We begin with a simple technical result proved at the end of section 2. 

\vsp 
\begin{pro} 
Let $\IM_0$ be a $C^*$ sub-algebra of $\IM$ such that 

\NI (a) $\IM_0''=\IM_0,\;\theta^{-1}(\IM_0) \subseteq \IM_0$;

\NI (b) $\vee_{n \in \IZ} \theta^n(\IM_0) = \IM$;

\NI (c) $\bigcap_{n \in \IZ} \theta^n(\IM_0)=\IC$.

Then there exists an automorphism $\alpha$ on $\IM$ commuting with $\theta$ with $\alpha(\IM_L)= \IM_0$. 
\end{pro}

\vsp 
For a translation invariant state $\omega$ on $\IM$ and the GNS space $(\clh_{\omega},\pi_{\omega},\zeta_{\omega})$ of $(\IM,\omega)$, we set a sequence 
of increasing projections $F_{n]},\;n \in \IZ$ defined by 
\be 
F_{n]} = [\pi_{\omega}(\theta^n(\IM_L))''\zeta_{\omega}]
\ee 
and 
$$F_{-\infty]} = \mbox{lim}_{n \raro -\infty}F_{n]}$$ 

\vsp 
For a given $C^*$-dynamical system $(\IM,\theta,\omega')$ with Kolmogorov property, Proposition 1.1 says that there exists an isomorphic $C^*$-dynamical system 
$(\IM,\theta,\omega)$ with Kolmogorov property, where $\omega = \omega' \alpha$ 
for an automorphism $\alpha$ on $\IM$ commuting with $\theta$ and 
$$F_{-\infty]} = |\zeta_{\omega}\rangle\langle \zeta_{\omega}|$$
Furthermore, it also shows that the Kolmogorov property for the equivalent class is independent of the choice we make for $C^*$ sub-algebra $\IM_0$ of $\IM$.  
Thus without loss of generality while dealing with equivalence class of $C^*$ -dynamical system $(\IM,\theta,\omega)$, we say now onwards that $(\IM,\theta,\omega)$
is Kolmogorov if $F_{-\infty]} = |\zeta_{\omega} \rangle\langle \zeta_{\omega}|$. The argument that we have 
used to prove strongly mixing property of Kolmogorov states, as well gives a proof for factor property of Kolmogorov states by Theorem 2.5 in [Pow]. 
  
\vsp 
Simplest example of a Kolmogorov state is any infinite tensor product state ( in particular, the unique normalized trace ) $\omega = \otimes_{n \in \IZ} \omega_n$, 
where $\omega_n=\omega_{n+1}$ for all $n \in \IZ$. In such a case, we have   
\be 
F_{n]}\pi_{\omega}(x)F_{n]}=\omega(x)F_{n]}
\ee 
for all $x \in \IM_{[n+1,\infty)}$. This clearly shows $\omega$ is Kolmogorov. In particular, any two such tensor product states need not give isomorphic dynamics since the class of states includes both pure and non pure factor states. Thus for isomorphism problem, we need to deal with the class of Kolmogorov states with some additional invariants. In this paper, our analysis aims to deal with isomorphism problem for the classes of translation invariant Kolmogorov states with additional properties.

\vsp 
We also set a family of increasing projections $E_{n]}=[\pi_{\omega}(\theta^n(\IM_R))'\zeta_{\omega}],\; n \in \IZ$ i.e. $E_{n]}$ is the support projection of $\omega$ in $\pi_{\omega}(\theta^n(\IM_R))''$. Thus we have $E_{n]} \le E_{n+1]}$ and $E_{n]} \uparrow I_{\clh_{\omega}}$ as $n \uparrow \infty$. Let $E_{n]} \downarrow E_{-\infty]}$ as $n \downarrow -\infty$ for some projection $E_{-\infty]} \ge |\zeta_{\omega}\rangle\langle\zeta_{\omega}|$. It is clear that 
\be 
F_{n]} \le E_{n]}
\ee 
for all $n \in \IZ$. The inequality (10) is strict if $\omega$ is a non pure factor state [Mo5]. 

\vsp 
By a theorem in [Mo5], a translation invariant pure state $\omega$ of $\IM$ admits {\it Haag duality} property i.e. 
\be 
\pi_{\omega}(\IM_L)''= \pi_{\omega}(\IM_R)'
\ee
Though $\IM_R'=\IM_L$ as $C^*$ sub-algebras of $\IM$, it is a non trivial fact that the equality (11) holds for a translation invariant factor state $\omega$ if and only if the state $\omega$ is pure. In such a case, for each $n \in \IZ$, we have $F_{n]} = E_{n]}$. Thus a translation invariant pure state $\omega$ admits Kolmogorov property if and only if 
$E_{-\infty]}=|\zeta_{\omega}\rangle\langle\zeta_{\omega}|$. Thus our definition of Kolmogorov property for a translation invariant state is an extension of Kolmogorov property for a translation invariant pure state studied in [Mo4]. However, 
for a pure $\omega$, though $F_{n]}=E_{n]}$ for all $n \in \IZ$ [Mo5], the projection $F_{-\infty]}$ may not be equal to $|\zeta_{\omega}\rangle\langle\zeta_{\omega}|$. In the appendix, we have included an example of a pure state that fails to be Kolmogorov.  
Theorem 2.6 in [Mo4] says that the state $\omega$ is pure Kolmogorov if $\omega_R$ ($\omega_L$) is a type-I factor state ( $\pi_{\omega}(\IM_R)''$ is isomorphic to the algebra of bounded operators on a Hilbert space. At this stage, it is not clear whether the converse statement is also true for a pure Kolmogorov state.    

\vsp 
Given a translation invariant state $\omega$ of $\IM$, one has two important numbers: 

\vsp 
\NI (a) Mean entropy $s(\omega)=\mbox{limit}_{\Lambda_n \uparrow \IZ}{1 \over |\Lambda_n|}S_{\omega_{\Lambda_n}}$, where $S_{\omega_{\Lambda_n}}=-tr_{\Lambda}(\rho^{\omega}_{\Lambda}ln \rho^{\omega}_{\Lambda})$ is the von-Neumann entropy 
of the state $\omega_{\Lambda_n}(x)=tr_{\Lambda}(x\rho^{\omega}_{\Lambda})$ and 
$tr_{\Lambda}$ is the trace on $\IM_{\Lambda}$ i.e. von Neumann entropy of the restricted state $\omega$ to local $C^*$ algebra $\IM_{\Lambda_n}$, where $\Lambda_n=\{-n \le k \le n \}$ or more generally a sequence of finite subsets $\Lambda_n$ of $\IZ$ such that $\Lambda_n \uparrow \IZ$ in the sense of Van Hove [Section 6.2.4 in BR2]. It is not known yet whether $s(\omega)$ is an invariant 
for the translation dynamics $(\IM,\theta,\omega)$ i.e. whether $s(\omega)$ can 
be realized intrinsically as a dynamical entropy of $(\IM,\theta,\omega)$ for all translation invariant state $\omega$.    

\vsp 
\NI (b) Connes-St\o rmer dynamical entropy: $h_{CS}(\omega)$ [CS,CNT,OP,St\o2, NS] which is a close candidate for such an invariant for the translation dynamics $(\IM,\theta,\omega)$. It is known that $0 \le h_{CS}(\omega) \le s(\omega)$. In case $\omega$ is a product state then it is known that $h_{CS}(\omega)=s(\omega)$. It is also known that $h_{CS}(\omega)=0$ if $\omega$ is pure. However, no translation invariant state $\omega$ of $\IM$ is known in the literature for which $h_{CS}(\omega) < s(\omega)$. This result in particular shows that there is no automorphism $\beta$ on $\IM$ such that $\beta \theta = \theta^2 \beta$ [CS]. At this point, we warn attentive readers that a different notion of Kolmogorov dynamics is also studied in [NeS].  

\vsp 
At this stage, we may raise few non trivial questions: 

\vsp 
\NI (a) Does the mean entropy give a complete characterisation for the class of infinite tensor product states? In other words, does the equality in mean entropies of two infinite tensor product states give isomorphic translation dynamics of $\IM$? 

\vsp 
If answer for (a) is yes, then we may raise a more general question on isomorphism problem 
for translation invariant factor states of $\IM$: 

\vsp 
\NI (b) Does the mean entropy give a complete characterisation for the class of translation invariant factor states? In other words, given a factor state $\omega \in \cls_{\theta}(\IM)$, do we have an automorphism $\alpha$ on $\IM$ commuting with $\theta$ such that the translation invariant state $\omega \circ \alpha$ is an infinite tensor product state of $\IM$?  

\vsp 
Affirmative answers to both (a) and (b) in particular says that the mean entropy is a complete invariant for the class of factor states. In this paper we include a positive answer to question (a) in Theorem 5.1 which proves that the mean entropy is a complete invariant of translation dynamics for the class of infinite tensor product faithful states.  We also address the problem (b) under some additional restrictions on the class of factor states.  

\vsp 
Let $\Omega=\{1,2,.,d \}$, $\Omega^{\IZ}=\times_{n \in \IZ} \Omega^{(n)}$ be the infinite product set with $\Omega^{(n)}=\Omega$ for each $n \in \IZ$ and $C(\Omega^{\IZ})$ be the algebra of continuous complex valued functions on $\Omega^{\IZ}$. The commutative $C^*$-algebra $C(\Omega^{\IZ})$ is viewed as a $C^*$-sub-algebra of $\IM$ identified with the 
infinite tensor product $\ID^e = \otimes_{n \in \IZ} D_e^{(n)}(\IC)$ of diagonal matrices 
$D_e^{(n)}(\IC) \equiv D_d(\IC)$ of $\!M_d(\IC)$ with respect to an orthonormal basis 
$(e_i)$ for $\IC^d$. For a subset $\Lambda$ of $\IZ$, let $\Omega^{\Lambda} \subseteq \Omega^{\IZ}$ be the cylinder sets supported on $\Lambda$ and $\ID^e_{\Lambda}$ be the 
$C^*$ sub-algebra of $\ID^e$ spanned by the diagonal elements $D^{(n)}_e,\;n \in \Lambda$ 
in $\!M^{(n)}_d(\IC)$ with respect to the orthonormal basis $e=(e_i)$ of $\IC^d$. Thus the isomorphism $C(\Omega^{\IZ}) \equiv \ID^e$ also identifies $C(\Omega^{\Lambda}) \equiv \ID^e_{\Lambda}$.

\vsp 
Let $\cls^{\theta}(\ID^e)$ be the compact convex set of translation invariant states on $\ID^e$. Given an element $\omega_c \in \cls^{\theta}(\ID^e)$, we consider the non empty convex compact set 
\be 
\cls^{\theta}_{\omega_c}(\IM)= \{ \omega \in \cls^{\theta}(\IM),\; \omega_{|}\ID^e=\omega_c \}
\ee   
If $\omega_c$ is an extremal element in the set of translation invariant states of $\ID^e$, i.e. if $\omega_c$ is an ergodic state for $\theta:\ID^e \raro \ID^e$ then $\cls^{\theta}_{\omega_c}(\IM)$ is a convex face in the set of translation invariant state of $\IM$ and thus an extremal element in $\cls^{\theta}_{\omega_c}(\IM)$ is also ergodic. Question that arises at this point: how two extremal elements in $\cls^{\theta}_{\omega_c}(\IM)$ are related? 

\vsp 
In particular, for a probability measure $\mu$ on $\Omega$, we consider 
the Bernoulli measure $\mu_{\infty} = \otimes_{n \in \IZ} \mu^{(n)}$ on 
$C(\Omega^{\IZ})$, where each $\mu^{(n)}=\mu$. The state $\omega_{\mu_{\infty}}(f)=\int f d\mu_{\infty},\; f \in C(\Omega^{\IZ})$ has several Hann-Banach extensions to $\IM$ 
as translation invariant states of $\IM$. As an explicit example, let $\rho$ be a state on $\!M_d(\IC)$ so that $\rho(|e_j\rangle\langle e_j|)=\mu_j$ and $\omega_{\rho} = \otimes_{n \in \IZ} \rho^{(n)}$, where $\rho^{(n)}=\rho$ for all $n \in \IZ$. One trivial possibility is to take $\rho(|e_i\rangle\langle e_j|)=\delta^i_j \mu_j$. 
In such a case $\omega_{\rho}$ is not a pure state of $\!M_d(\IC)$ unless $\mu_j=1$ for some $j \in \Omega$ and associated product state $\omega_{\rho}$ on $\IM$ includes in particular, the unique tracial state on $\IM$ for a suitable choices of $\mu_j$ namely $\mu_j={1 \over d}$ for all $1 \le j \le d$. Furthermore for $d=2$ if $\rho(|e_i\rangle\langle e_j) = \delta^i_j \mu_j$ for some $0 < \mu_j < {1 \over 2}$, then we 
have Pukansky's type-III$_{\lambda}$ factor state $\omega_{\rho} = \otimes_{n \in \IZ} \rho^{(k)}$ (where $\rho^{(n)}=\rho$ for each $n \in \IZ$) for $\lambda=2 \mu_1 \in (0,1)$ [Pow]. More general we can ensure extensions of the Bernoulli measure $\mu_{\infty}$ to a product state of $\IM$ so that its mean entropy takes value of a prescribed number between $0$ and $\sum_j - \mu_jln(\mu_j)$. 

\vsp 
On the other hand we can choose a unit vector $\lambda=(\lambda_j)$ in $\IC^d$ and set pure state $\rho_{\lambda}(|e_i\rangle\langle e_j|)= \bar{\lambda_i} \lambda_j$, where $\mu_j=|\lambda_j|^2$. Associated tensor product state $\omega_{\lambda}=\otimes \rho^{(n)}_{\lambda}$ with each $\rho^{(n)}_{\lambda}=\rho_{\lambda}$ is a pure state 
of $\IM$. A translation invariant state $\omega$ of $\IM$ is said to be {\it pure Bernoulli } if $\omega$ is a pure product state of $\IM$. Such a pure Bernoulli state of $\IM$ is equal to $\omega_{\lambda}$ for some unit vector $\lambda \in \IC$. It is a simple observation that two such pure product states of $\IM$ give isomorphic translation dynamics. 

\vsp 
Thus the set of extremal elements in $\cls^{\theta}_{\mu_{\infty}}(\IM)$ contains both pure and non pure product states. Thus two such extremal elements in $\cls^{\theta}_{\mu_{\infty}}(\IM)$ are in general not related by an automorphism. The question that arises now: does equality in mean entropies of two such extremal elements give isomorphic translation dynamics of $\IM$?  

\vsp
Along the same line, instead of fixing a Bernoulli measure on $\Omega^{\IZ}$, we may fix a more general classical stationary Markov state $\omega_{\mu,p}$ on $\Omega^{\IZ}$ as follows. Let $p:\Omega \times \Omega \raro [0,1]$ be a transition probability matrix which admits a stationary probability measure $\mu:\Omega \raro [0,1]$ i.e. 
$$p^i_j \ge 0,\;\sum_j p^i_j=1$$ 
and 
$$\sum_j \mu_j = 1,\; \sum_i \mu_ip^i_j = \mu_j$$ 
Let $\{X_n:n \in \IZ\}$ be the two sided stationary Markov process or chain defined on a probability space $(\Omega^{\IZ},\clf,\omega_{\mu,p})$, $\clf$ be the $\sigma-$fields generated by the cylinder subsets of $\Omega^{\IZ}$ and $X_n(\zeta)=\zeta_n$ for $\zeta=(\zeta_n) \in \Omega^{\IZ}$ with 
\be 
\omega_{\mu,p}\{\zeta; X_{n+1}(\zeta)=j|X_n(\zeta)=i \} = p^i_j
\ee 
Let $\clf_{n]}$ be the $\sigma-$field generated by $X_k$ upto time $n$ i.e. $\clf_{n]}$ is the minimal $\sigma$-field that makes $X_k: -\infty < k \le n$ measurable. The process $(X_n)$ admits $(\clf_n)$-Markov property and $\omega_{\mu,p}$ is called {\it classical Markov state} on $\Omega^{\IZ}$. Such a Markov process $(X_n)$ is said to have {\it Kolmogorov property} if 
$$\bigcap_{n \in \IZ } \clf_{n]} = \{ \Omega^{\IZ},\emptyset \}$$ 
Such a Markov process is also called {\it Bernoulli shift } if $p^i_j=\mu_j$ for all $i,j \in \Omega$. It is well known [Pa,Or1,Or2] that a Bernoulli shift admits Kolmogorov property and a deeper result in ergodic theory says that the converse is also true i.e. a Markov chain with Kolmogorov property is isomorphic to a Bernoulli shift. A celebrated result in ergodic theory [Or1] says much more giving a complete invariant for such a class of Markov chain in terms of their Kolmogorov-Sinai dynamical entropy $h_{\mu}(\theta)$ [Pa] of the shift. An isomorphism that inter-twins the Kolmogorov shifts with equal positive KS dynamical entropy, need not inter-twin the filtrations generated by the Markov processes. However, an isomorphism between two such classical probability space is always local unlike an isomorphism on $\IM$. But the set of stationary Markov states is a closed proper subset of $\cls^{\theta}(\ID^e)$. Furthermore, all translation invariant states $\omega_c$ on $\ID^e$ are not isomorphic to Markov states $\omega_{\mu,p}$ [Or2].    

\vsp 
In this paper as a first step, we essentially deal with two simpler problems. Let $(\clh_{\omega_c},\pi_{\omega_c},\zeta_{\omega_c})$ be the GNS space associated with a translation invariant state $\omega_c$ of $\ID^e$ and $\clf_{n]}=[\pi_{\omega_c}(\ID^e_{\Lambda_n})\zeta_{\omega_c}]$ be the increasing projections in $\clh_{\omega_c}$, where $\Lambda_n=(-\infty,n],\; n \in \IZ$. For any fixed element $\omega \in \cls^{\theta}_{\omega_c}$, we have 
\be 
\clf_{n]} \le F_{n]} \le E_{n]},\;\; \forall n \in \IZ
\ee
We say $\omega_c$ is a {\it Kolmogorov} state of $\ID^e$ 
if 
$$
\mbox{lim}_{n \raro - \infty} \clf_{n]} = |\zeta_{\omega_c}\rangle\langle\zeta_{\omega_c}|
$$

\vsp 
Thus for a pure Bernoulli state $\omega$ of $\IM$, for each $n$ we have $F_{n]}=E_{n]}=|\zeta_{\omega}\rangle\langle\zeta_{\omega}|$ and thus it admits Kolmogorov property. Furthermore, note also that for a given pure Bernoulli state $\omega$ of $\IM$, the restriction $\omega_c$ of $\omega$ to $\!D^e$ is also Bernoulli in each orthonormal basis $e=(e_i)$ of $\IC^d$ with dynamical entropy with value in the range between $0$ and $ln(d)$. More generally, for any Kolmogorov state $\omega$ of $\IM$, the state $\omega_c = \omega_{|}\ID^e$ is also Kolmogorov by the inequality (14). 

\vsp 
Conversely, for a given Kolmogorov state $\omega_c$ of $\ID^e$, all its extreme points in $\cls^{\theta}_{\omega_c}(\IM)$ are not pure in general though factor states with unequal mean entropies. Nevertheless, this give rises the following valid questions for a Kolmogorov state $\omega_c$ of $\ID^e$: 

\vsp 
\NI (a) Is there an extreme point in the non empty convex compact set $\cls^{\theta}_{\omega_c}(\IM)$ with Kolmogorov property? 

\vsp 
\NI (b) Are all extreme points in $\cls^{\theta}_{\omega_c}(\IM)$ having Kolmogorov property?

\vsp 
\NI (c) Are two extreme points in $\cls^{\theta}_{\omega_c}(\IM)$ with pure Kolmogorov property related by an automorphism of $\IM$ commuting with $\theta$?

\vsp 
A Kolmogorov state of $\IM$ give rises a system of imprimitivity [Mac] for the group $\IZ$ as follows: Let $S:\clh_{\omega} \raro \clh_{\omega}$ be the unitary operator defined by extending 
$$S \pi_{\omega}(x)\zeta_{\omega}=\pi_{\omega}(\theta(x)\zeta_{\omega},\;x \in \IM$$ 
Then we have 
$$S^nF_{m]}(S^*)^n=F_{m+n]},\;m,n \in \IZ$$
and so 
$$S^nF'_{m]}(S^*)^n=F'_{m+n]},\;m,n \in \IZ,$$
where $F'_{n]}=F_{n]}-|\zeta_{\omega}\rangle\langle\zeta_{\omega}|,\;n \in \IZ$. 
Kolmogorov property ensures that $(-\infty,n] \raro F'_{n]}$ is a projection valued spectral measure on $\IZ$ and $(S^n,F_{n]}:n \in \IZ)$ once restricted to $|\zeta_{\omega}\rangle\langle\zeta_{\omega}|^{\perp}=I_{\clh_{\omega}}-|\zeta_{\omega}\rangle\langle\zeta_{\omega}|$ gives a system of imprimitivity for the group $\IZ$. By a theorem of G. W. Mackey [Mac], 
$I_{\clh_{\omega}} - |\zeta_{\omega}\rangle\langle\zeta_{\omega}|$ can be decomposed into copies of $l^2(\IZ)$ and the restriction of $(S^n)$ is isomorphic to direct sum of the standard shift on $l^2(\IZ)$. We call the number of copies that appear in the direct sum decomposition as {\it Mackey  index} for $\omega$. In the next section, we will prove Mackey index is always $\aleph_0$ for a  Kolmogorov state. Furthermore, we prove the following result.

\vsp 
\begin{thm} 
Any two pure Kolmogorov states of $\IM$ give unitarily equivalent translation dynamics i.e. there exists a unitary operator 
$U:\clh_{\omega_1} \raro \clh_{\omega_2}$ so that 
$$U\zeta_{\omega_1}=\zeta_{\omega_2},\;U\Theta_{\omega_1}(X)U^*=\Theta_{\omega_2}(UXU^*)$$ 
for all $X \in \pi_{\omega_1}(\IM_1)''$. 
\end{thm}

\vsp 
In the proof of Theorem 1.2, the simplicity of $\IM_0=\otimes_{n \ge 1}\!M^{(n)}(\IC)$ plays an important role and technique has a ready generalization to inductive limit states of injective endomorphism on a simple infinite dimensional separable $C^*$-algebra $\IM_0$. For a Kolmogorov state $\omega$, though the dynamics $(\IM,\theta,\omega)$ and $(\IM,\theta^2,\omega)$ are unitarily equivalent but they are not isomorphic. This shows that unitary equivalence is indeed weak, which fails to differentiate two dynamics $\theta$ and $\theta^2$. 

\vsp 
In contrast we have the following result on isomorphism for translation invariant pure state. 
 
\begin{thm} 
Let $\omega$ and $\omega'$ be two translation invariant pure states of $\IM$. Let  
$\zeta_{\omega}$ and $\zeta_{\omega'}$ be cyclic vectors for $\pi_{\omega}(\ID^e)''$ 
and $\pi_{\omega'}(\ID^e)''$ in $\clh_{\omega}$ and $\clh_{\omega'}$ respectively. If $(\ID^e,\theta,\omega_c)$ and $(\ID^e,\theta,\omega'_c)$ are isomorphic then $(\IM,\theta,\omega)$ and $(\IM,\theta,\omega')$ are isomorphic.     
\end{thm} 

\vsp 
In the following we reformulate the isomorphism problem of translation dynamics with a closely related problem in quantum dynamical systems with {\it completely positive maps } [St]  on a von-Neumann algebra and deal with isomorphism problem for a class of translation invariant pure states [FNW1,FNW2,BJKW] satisfying hypothesis of Theorem 1.3.    

\vsp 
Let $\clo_d$ be the simple C$^*$ algebra generated by $d$-many orthogonal isometries $(s_i,1 \le i \le d)$ [Cun] i.e. satisfying Cuntz relations
\be 
s^*_is_j=\delta^i_j I,\;\sum_{1 \le i \le d} s_is_i^*=I
\ee 
and 
\be 
\lambda(x)=\sum_is_ixs_i^*
\ee 
be the canonical endomorphism on $\clo_d$ which extends the right shift $\theta_R$ on $\IM_R=\otimes_{n \ge 1}M^{(n)}_d(\mathbb{C})$ and $\psi$ be a $\lambda$-invariant state i.e. $\psi \lambda=\psi$ on $\clo_d$. The universal property of Cuntz algebra gives a natural group action of unitary matrices $U_d(\mathbb{C})$ on $\clo_d$ given by 
\be 
\beta_g(s_i)=\sum_{1 \le j \le d} g^i_js_j,\;1 \le i \le d
\ee 
and $\lambda \beta_g= \beta_g \lambda$ for all $g \in U_d(\IC)$.  

\vsp 
We also consider the following non-empty convex weak$^*$ compact set
$$K_{\omega}=\{\psi\;\mbox{state of}\; \clo_d:\;\psi \lambda=\psi,\;\psi_{|}\mbox{UHF}_d=\omega_R \}$$ 
where we have identified $\mbox{UHF}_d$, the closed $C^*$ algebra generated by $\{s_Is_J^*:|I|=|J| <\infty\}$, with $\IM_R$ by fixing 
an orthonormal basis $(e_i)$ for $\IC^d$ which takes 
$$s_Is_J^* \raro |e_{i_1}\rangle\langle e_{j_1}|\otimes |e_{i_2}\rangle\langle e_{j_2}|\otimes ...\otimes |e_{j_n}\rangle\langle e_{j_n}|\otimes I_d \otimes ...$$ 
and 
$$\beta_g(s_Is_J^*) \raro ..\otimes g^* \otimes g^* (..|e_{i_1}\rangle\langle e_{j_1}|\otimes |e_{i_2}\rangle\langle e_{j_2}|\otimes ..\otimes |e_{j_n}\rangle\langle e_{j_n}|\otimes I_d \otimes ..)g \otimes g \otimes..$$

\vsp 
Let $(\clh_{\psi},\pi_{\psi},\zeta_{\psi})$ be the GNS space of $(\clo_d,\psi)$. We set support projection $P=[\pi_{\psi}(\clo_d)'\zeta_{\psi}]$ for the state $\psi$ in von-Neumann algebra $\pi_{\psi}(\clo_d)''$. 
Then by invariance property $\psi \lambda =\psi$, we have 
$\langle\zeta_{\psi},P\Lambda(I-P)P\zeta_{\psi}\rangle=0$ and thus $\Lambda(P) \ge P$, where $\Lambda(X)=
\sum_{1 \le i \le d} \pi_{\psi}(s_i) X \pi_{\psi}(s_i^*)$ for $X \in \pi_{\psi}(\clo_d)''$. 
We set unital completely positive map [St] $
\tau:\clm \raro \clm$ defined by 
\be
\tau(a)=P\Lambda(PaP)P
\ee   
for $a \in \clm=P\pi_{\psi}(\clo_d)''P$ with faithful normal invariant state $\phi(a)=\langle\zeta_{\psi},a\zeta_{\psi}\rangle$ for $a \in \clm$. 

\vsp 
It is known [BJKW,Mo5] that $\psi$ is a factor state of $\clo_d$ if and only if $(\clm,\tau,\phi)$ is ergodic i.e. no non-trivial invariant element in $\clm$ for the unital completely positive map $\tau$. Lemma 7.4 in [BJKW] proves that any extremal element $\psi$ in $K_{\omega}$ is a factor state if $\omega$ is a factor state and in such a case two extremal elements $\psi,\psi'$ of $K_{\omega}$ admits the relation 
$\psi'=\psi \beta_{zI_d}$ for some $z \in S^1=\{z \in \mathbb{C}: |z|=1 \}$. 
This shows 
that there are one to one relations between these ergodic $C^*$-dynamical systems modulo some gauge symmetry:
$$(C(\Omega^{\IZ}),\theta,\omega_c) \Rightarrow (\IM,\theta,\omega) \Leftarrow (\IM_R,\theta_R,\omega_R) \Leftrightarrow 
(\clo_d,\lambda,\psi) \Leftrightarrow (\clk,\clm,\tau,\phi)$$
and classification of any one of these dynamics are closely related with that of others. However the restriction of a translation invariant state $\omega$ to $\ID^e$ 
for a given orthonormal basis $e=(e_i)$, may not give a classical Markov chain [Na] 
unless embedded into a larger quantum spin chain.   

\vsp 
The paper is organized as follows. Section 2 gives a characterization of a local automorphism of $\IM$ which in particular gives a complete classification of one sided dynamical systems $(\IM_R,\theta_R,\omega_R)$ upto isomorphism and its relation classifying the dynamical system $(\clo_d,\lambda,\psi)$. Section 3 gives the proof 
of Theorem 1.2. Section 4 gives proof of our main result Theorem 1.3. Section 5 is essentially devoted to deal with a class of examples that fits as an application to Theorem 1.3. This section largely follows recent work of [DJ] on Markov state and its associated monic representation of Cuntz algebra. Section 6 includes a quick review of Tomita-Takesaki theory [Ta2, BR] and Takesaki's theorem on conditional expectation [Ta1]. As an application of our main tool developed in section 4, we prove that mean entropy is a complete isomorphism for infinite tensor product faithful state for translation dynamics of $\IM$. Section 7 is some what independent which includes a variation of a result [Pow], needed for the main theorem proved in section 4. The last section includes an appendix which gives a sketch of an example of a translation invariant pure state of $\IM$ which fails to have Kolmogorov property.

\section{Norm one projections} 

\vsp 
Let $\clm$ be a von-Neumann algebra acting on a Hilbert space $\clh$. A unit vector $\zeta$ is called cyclic for $\clm$ in $\clh$ if $[\clm \zeta]=\clh$. It is called separating for $\clm$ if $x\zeta=0$ for some $x \in \clm$ holds if and only if $x=0$.
Let $\clm'$ be the commutant of $\clm$, i.e. $\clm' = \{ x \in \clb(\clh): xy=yx \}$, where
$\clb(\clh)$ is the algebra of all bounded linear operators on $\clh$. An unit vector $\zeta$ is cyclic if and only if $\zeta$ is separating for $\clm'$.  

\vsp 
The closure of the closable operator $S_0:a\zeta \raro a^*\zeta,\;a \in \clm, S$ possesses a polar decomposition $S=\clj \Delta^{1/2}$, where $\clj$ is an anti-unitary and $\Delta$ is a non-negative self-adjoint operator on $\clh$. Tomita's [BR1] theorem says that 
\be 
\Delta^{it} \clm \Delta^{-it}=\clm,\;t \in \IR\;\mbox{and} \clj \clm \clj=\clm'
\ee 
We define the modular automorphism group
$\sigma=(\sigma_t,\;t \in \IT )$ on $\clm$
by
$$\sigma_t(a)=\Delta^{it}a\Delta^{-it}$$ which satisfies the modular relation
$$\omega(a\sigma_{-{i \over 2}}(b))=\omega(\sigma_{{i \over 2}}(b)a)$$
for any two analytic elements $a,b$ for the group of automorphisms $(\sigma_t)$. 
A more useful modular relation used frequently in this paper is given by 
\be 
\omega(\sigma_{-{i \over 2}}(a^*)^* \sigma_{-{i \over 2}}(b^*))=\omega(b^*a)
\ee 
which shows that $\clj a\zeta = \sigma_{-{i \over 2}}(a^*)\zeta$ for an analytic element $a$ for the automorphism group $(\sigma_t)$. The anti-unitary operator $\clj$ and the group of automorphism $\sigma=(\sigma_t,\;t \in \IR)$ are called {\it Tomita's conjugate operator} and {\it modular automorphisms } of the normal vector state $\omega_{\zeta}:x \raro \langle \zeta,x \zeta \rangle$ on $\clm$ respectively. 

\vsp 
A faithful normal state $\omega$ of $\clm$ is called stationary for a group $\alpha=(\alpha_t:\; t \in \IR)$ of automorphisms on $\clm$ if $\omega = \omega \alpha_t$ for all $t \in \IR$. A stationary state $\omega_{\beta}$ for $(\alpha_t)$ is called $\beta$-KMS state ($\beta > 0$) if there exists a function $z \raro f_{a,b}(z)$, analytic on the open strip $0 < Im(z) < \beta$, bounded continuous on the closed strip $0 \le Im(z) \le \beta$ with boundary condition 
\be 
f_{a,b}(t)=\omega_{\beta}(\alpha_t(a)b),\;\;f_{a,b}(t+i\beta)=\omega_{\beta}(\alpha_t(b)a)
\ee
for all $a,b \in \clm$. The faithful normal state $\omega_{\zeta}$ given by the cyclic and separating vector $\zeta$ is a $1 \over 2$-KMS state for the modular automorphisms group $\sigma=(\sigma_t)$. One celebrated theorem of M. Takesaki [Ta2] says that the converse statement is also true: If the normal state $\omega_{\zeta}$ given by cyclic and separating vector $\zeta$ is a $1 \over 2$-KMS state for a group $\alpha=(\alpha_t)$ of automorphisms on $\clm$ then $\alpha_t=\sigma_t$ for all $t \in \IR$.     

\vsp 
Let $\zeta_{\omega_1}$ and $\zeta_{\omega_2}$ be cyclic and separating unit vectors for
standard von-Neumann algebras $\clm_1$ and $\clm_2$ acting on Hilbert spaces $\clh_{\omega_1}$ and $\clh_{\omega_2}$ respectively. Let $\tau:\clm_1 \raro \clm_2$ be a normal unital completely positive map such that $\omega_2 \tau = \omega_1$. Then there exists a unique unital completely positive normal map 
$\tau':\clm_2' \raro \clm_1'$ ([section 8 in [OP] ) satisfying the duality relation 
\be 
\langle b\zeta_{\omega_2},\tau(a)\zeta_{\omega_2} \rangle =  \langle \tau'
(b)\zeta_{\omega_1},a\zeta_{\omega_1} \rangle 
\ee
for all $a \in \clm_1$ and $b \in \clm_2'$. For a proof, we refer 
to section 8 in the monograph [OP] or section 2 in [Mo1]. We set the dual 
unital completely positive map $\tilde{\tau}:\clm_2 \raro \clm_1$ defined by 
\be 
\tilde{\tau}(b) =  \clj_{\omega_1}\tau'(\clj_{\omega_2}b\clj_{\omega_2})\clj_{\omega_1}
\ee 
for all $b \in \clm_2$. In particular, we have $\omega_2 = \omega_1 \tilde{\tau}$. 

\vsp 
If $\cln=\clm_2$ is a von-Neumann sub-algebra of $\clm=\clm_1$ with a faithful normal state $\omega(a) = \langle \zeta_{\omega},a \zeta_{\omega} \rangle$ and 
$i_{\cln}:\cln \raro \clm$ is the inclusion map of $\cln$ into $\clm$. Then the dual of 
$i_{\cln}$ with respect to $\omega$, denoted by $\IE_{\omega}:\clm \raro \cln$ is a {\it norm one projection } i.e. 
\be 
\IE_{\omega}(abc)=a\IE_{\omega}(b)c
\ee 
for all $a,c \in \cln$ and $b \in \clm$ if and only if $\sigma_t^{\omega}(\cln)=\cln$. 
For a proof we refer to [Ta1] and for a local version of this theorem [AC].

\vsp 
We start with an elementary lemma. 

\begin{lem} 
Let $\clb$ be a unital $C^*$-algebra and $\tau:\clb \raro \clb$ be a unital completely positive map preserving a faithful state $\omega$. Then the following holds:

\NI (a) $\clb_{\tau}=\{x \in \clb: \tau(x)=x \}$ and $\clf_{\tau}= \{ x \in \clb: \tau(x^*x)=\tau(x)^*\tau(x),\;\tau(xx^*)=\tau(x)\tau(x)^* \}$
are $C^*$ sub-algebras of $\IM$ and $\clb_{\tau} \subseteq \clf_{\tau}$.

\NI (b) Let $\IE$ be norm-one projection on a $C^*$-sub-algebra $\clb_0 \subseteq \clb$ i.e. A unital completely positive map 
$\IE:\clb \raro \clb_0$ with range equal to $\clb_0$ satisfying bi-module property 
$$\tau(yxz)=y\tau(x)z$$ 
for all $y,z \in \clb_0$ and $x \in \clb$ and $\omega$ be a faithful invariant state for $\IE$. Then for some $x \in \clb$, $\IE(x^*)\IE(x)=\IE(x^*x)$ if and only if $\IE(x)=x$; 

\NI (c) $\clb_{\IE}=\clf_{\IE}$.   
\end{lem}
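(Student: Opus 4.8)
The plan is to prove both parts through a single mechanism: the Kadison--Schwarz inequality for the unital completely positive map $\tau$, combined with faithfulness of the invariant state $\phi$, which upgrades an operator inequality with vanishing $\phi$-expectation into an operator equality.

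For part (a), the set $\cln$ is manifestly a norm-closed self-adjoint linear subspace, since $\tau$ is linear, bounded, and $*$-preserving (so $\tau(x)=x$ forces $\tau(x^*)=x^*$); the only point needing work is closure under products. First I would record the Kadison--Schwarz inequality $\tau(x^*x)\ge\tau(x)^*\tau(x)$. For $x\in\cln$ this reads $\tau(x^*x)-x^*x\ge 0$, and applying $\phi$ together with $\phi\tau=\phi$ gives $\phi(\tau(x^*x)-x^*x)=0$; faithfulness then forces $\tau(x^*x)=x^*x=\tau(x)^*\tau(x)$, i.e. every fixed point lies in the multiplicative domain of $\tau$. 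Since $\tau$ acts multiplicatively on both sides of its multiplicative domain, for $x,y\in\cln$ one obtains $\tau(xy)=\tau(x)\tau(y)=xy$, so $xy\in\cln$ and $\cln$ is a $C^*$-subalgebra.

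For part (b) the forward implication is immediate: if $\tau(x)=x$ then $x\in\clb_0$, hence $x^*$ and $x^*x$ lie in $\clb_0$ where $\tau$ restricts to the identity, and the bimodule property yields $\tau(x^*x)=x^*x=\tau(x^*)\tau(x)$ at once.

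The reverse implication is the substantive step, and I would carry it out via a Pythagorean identity. Put $w:=x-\tau(x)$; since $\tau$ is idempotent with range $\clb_0$, one has $\tau(w)=\tau(x)-\tau(\tau(x))=0$. Expanding $x^*x=\tau(x)^*\tau(x)+\tau(x)^*w+w^*\tau(x)+w^*w$ and applying $\tau$, the two cross terms vanish by the bimodule property (as $\tau(x)\in\clb_0$ and $\tau(w)=0$), leaving
$$\tau(x^*x)=\tau(x)^*\tau(x)+\tau(w^*w).$$
The hypothesis $\tau(x^*x)=\tau(x^*)\tau(x)=\tau(x)^*\tau(x)$ thus forces $\tau(w^*w)=0$; applying $\phi$ and using invariance gives $\phi(w^*w)=\phi(\tau(w^*w))=0$, whence $w=0$ by faithfulness, i.e. $\tau(x)=x$. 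The only genuine obstacle throughout is conceptual rather than computational, namely recognizing that faithfulness of the $\tau$-invariant state $\phi$ is exactly what converts the Kadison--Schwarz defect, a positive element of zero $\phi$-value, into zero; once this is in hand, both the multiplicative-domain step in (a) and the vanishing of $w$ in (b) follow mechanically.
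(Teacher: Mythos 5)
Your part (a) follows the paper's own argument essentially verbatim: the Kadison--Schwarz inequality together with $\phi\circ\tau=\phi$ and faithfulness places every fixed point in the multiplicative domain, and product closure of $\cln$ follows (the paper records both equalities $\tau(x^*x)=\tau(x^*)\tau(x)$ and $\tau(xx^*)=\tau(x)\tau(x^*)$ and then invokes polarization; since $\cln$ is self-adjoint this is the same argument modulo bookkeeping). For part (b), however, you take a genuinely different and in fact cleaner route. The paper reuses the equality case of Kadison--Schwarz established in (a): from $\tau(a^*a)=\tau(a^*)\tau(a)$ it extracts $\tau(a^*)\tau(b)=\tau(a^*b)$ for all $b\in\clb$, then chains the bimodule property with invariance to get $\phi(\tau(a^*)b)=\phi(a^*b)$ for every $b$, and concludes $\tau(a)=a$ by faithfulness (choosing $b=(\tau(a^*)-a^*)^*$). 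You instead decompose $x=\tau(x)+w$ with $\tau(w)=0$ and derive the exact Pythagorean identity $\tau(x^*x)=\tau(x)^*\tau(x)+\tau(w^*w)$ from the bimodule property alone, so the hypothesis forces $\tau(w^*w)=0$ and faithfulness of the invariant state kills $w$. Your version bypasses the multiplicative-domain/equality-case machinery entirely, needing only positivity and the conditional-expectation structure, and it exhibits the Schwarz defect concretely as the positive element $\tau(w^*w)$; the paper's version buys economy by recycling what (a) already established. One small point you should make explicit rather than merely assert: the idempotence of $\tau$ and the fact that $\tau$ restricts to the identity on $\clb_0$ follow from unitality plus the bimodule property, since $1=\tau(1)\in\clb_0$ gives $\tau(y)=\tau(y\cdot 1\cdot 1)=y\,\tau(1)=y$ for all $y\in\clb_0$.
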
 

\vsp 
\begin{proof} 
Though (a) is well known [OP], we include a proof in the following. The map $\tau$ being completely positive (2 positive is enough), we have Kadison inequality [Ka] $\tau(x^*)\tau(x) \le \tau(x^*x)$ for all $x \in \clb$ and equality holds 
for an element $x \in \clb$ if and only if $\tau(x^*)\tau(y)=\tau(x^*y)$ for all $y \in \clb$. This in particular shows that $\clf_{\tau}=\{x \in \clb:\tau(x^*x)=\tau(x^*)\tau(x),\;\tau(x)\tau(x^*)=\tau(xx^*) \}$ is $C^*$ -sub-algebra. We claim by the invariance property that $\omega \tau =\omega$ and faithfulness of $\omega$ and $\clb_{\tau} \subseteq \clf_{\tau}$. We choose $x \in \clb_{\tau}$ and by Kadison inequality we have 
$x^*x=\tau(x^*)\tau(x) \le \tau(x^*x)$ but $\omega(\tau(x^*x)-x^*x)=0$ by the invariance property and thus by faithfulness $x^*x=\tau(x^*)\tau(x)=\tau(x^*x)$. Since $x^* \in \clb_{\tau}$ once $x \in \clb_{\tau}$, we get $\tau(x)\tau(x^*)=\tau(xx^*)=xx^*$. This shows that $\clb_{\tau} \subseteq \clf_{\tau}$ and $\clb_{\tau}$ is an algebra by the polarization identity $4x^*y= \sum_{0 \le k \le 3}i^k(x+i^ky)^*(x+i^ky)$ for any two elements $x,y \in \clb$ as $\clb_{\tau}$ is a $*$-closed linear vector subspace of $\clb$. 

\vsp 
For (b) let $a \in \clb$ with $\IE(a^*a)=\IE(a^*)\IE(a)$. We have by the first part of the proof for (a), $\IE(a^*)\IE(b)=\IE(a^*b)$ for all $b \in \clb$ and thus
$$\omega(\IE(a^*)b))$$
$$=\omega(\IE(\IE(a^*)b))$$
(by the invariance property)
$$=\omega(\IE(a^*)\IE(b))$$
(by bi-module property)
$$=\omega(\IE(a^*b))$$
( by the first part of the argument used to prove (a) )
$$=\omega(a^*b)$$ 
for all $b \in \clb.$
So we have $\omega((\IE(a^*)-a^*)b)=0$ for all $b \in \clb$ and thus 
by faithful property of $\omega$ on $\clb$, we get $\IE(a^*)=a^*$ i.e. $\IE(a)=a$.

\vsp 
By (b), for $x \in \clb$ with $\IE(x^*)\IE(x)=\IE(x^*x)$ implies that $\IE(x)=x$ and thus 
$\cln_{\IE} \subseteq \cln_{\IE}$. Now by (a) we complete the proof for (c). 
\end{proof}

\section{ A characterization of local automorphism on $\IM=\otimes_{n \in \IZ}\!M^{(n)}_d(\IC)$ }

\vsp 
Let $\omega$ be a faithful state on a $C^*$ sub-algebra $\IM$ and $(\clh_{\omega},\pi_{\omega}, \zeta_{\omega})$ be the GNS representation of $(\IM,\omega)$ so that 
$\omega(x)=\langle \zeta_{\omega}, \pi_{\omega}(x) \zeta_{\omega} \rangle$ for all $x \in \IM$. Thus the unit vector $\zeta_{\omega} \in \clh_{\omega}$ is cyclic and separating for von-Neumann algebra $\pi_{\omega}(\cla)''$ that is acting on the Hilbert space $\clh_{\omega}$. Let $\Delta_{\omega}$ and $\clj_{\omega}$ be the modular and conjugate operators on $\clh_{\omega}$ respectively of $\zeta_{\omega}$ as described in (19). The modular automorphisms group $\sigma^{\omega}=(\sigma^{\omega}_t:t \in \IR)$ 
of $\omega$ is defined by 
$$\sigma_t^{\omega}(a)=\Delta^{it}_{\omega}a\Delta^{-it}$$
for all $a \in \pi_{\omega}(\IM)''$. 

\vsp 
We are interested now to deal with faithful states $\omega$ on $\IM=\otimes_{n \in \IZ} \!M_d^{(n)}(\IC)$. For a given faithful state $\omega$, $\cln_{\omega}=\pi_{\omega}(\IM_{\Lambda})''$ is a von-Neumann sub-algebra of $\clm_{\omega}=\pi_{\omega}(\IM)''$. 
In general, $(\sigma^{\omega}_t)$ may not keep $\cln_{\omega}$ invariant. However, for an infinite tensor product state $\omega_{\rho} = \otimes_{n \in \IZ} \rho^{(n)}$ with $\rho^{(n)}=\rho$ for all $n \in \IZ$, $\sigma_t^{\omega_{\rho}}$ preserves $\cln_{\omega}$. 

In such a case, $E_{\omega_{\rho},\Lambda}: \clm_{\omega_{\rho}} \raro \cln_{\omega_{\rho}}$ satisfy bi-module property (24) and furthermore, we have  
\be 
\IE_{\omega_{\rho},\Lambda}(\pi_{\omega_{\rho}}(x))=\pi_{\omega_{\rho}}(\IE_{\Lambda}(x))
\ee
for all $x \in \IM$, where $\IE_{\Lambda}$ is the norm one projection from $\IM$ onto $\ID_{\Lambda}$ with respect to the unique normalised trace $\omega_0$ of $\IM$. One can write explicitly 
$$\IE_{\Lambda} = \otimes_{n \in \IZ} \IE^{(n)}_{\Lambda},$$
where $\IE^{(n)}_{\Lambda}$ is the normalised trace $tr_0$ on $\IM^{(n)}_d(\IC)$ 
if $n \notin \Lambda$, otherwise $\IE^{(n)}_{\Lambda}$ is the identity map on $\!M^{(n)}_d(\IC)$ if $n \in \Lambda$.

\begin{lem} 
Let $\IM=\otimes_{k \in \IZ} \!M^{(k)}_d$ and $\IM_{\Lambda}$ be the local $C^*$ algebra associated with a subset $\Lambda$ of $\IZ$. Then there exists a norm projection $\IE_{\Lambda}:\IM \raro \IM_{\Lambda}$ 
preserving unique tracial state of $\IM$ and $\IE_{\Lambda}$ commutes with the group of automorphisms $\{\beta_g:g \in \otimes_{k \in \IZ} U_d(\IC) \}$, where $g=\otimes_n g_n$ with all $g_n=I_d$ except finitely many $n \in \IZ$. 
\end{lem}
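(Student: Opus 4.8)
The plan is to realize $\IE_{\Lambda}$ as the partial trace (right slice map) attached to the factorization of $\clb$ determined by $\Lambda$, and then to read off both required properties from the corresponding factorizations of the unique trace and of the gauge automorphisms. Write $\Lambda^c=\IZ\setminus\Lambda$. Since each factor $M_d^{(k)}$ is finite dimensional, $\clb$ is a nuclear $C^*$-algebra and its $C^*$ inductive limit structure yields a canonical identification $\clb\cong\clb_{\Lambda}\ot\clb_{\Lambda^c}$ in which the unique tracial state $\tau_0$ of $\clb$ factorizes as $\tau_0=\tau_{\Lambda}\ot\tau_{\Lambda^c}$, with $\tau_{\Lambda}$ and $\tau_{\Lambda^c}$ the unique traces of the UHF algebras $\clb_{\Lambda}$ and $\clb_{\Lambda^c}$. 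First I would define $\IE_{\Lambda}:=\mathrm{id}_{\clb_{\Lambda}}\ot\tau_{\Lambda^c}$ on the local algebra $\clb_{loc}$ by $\IE_{\Lambda}(a\ot b)=\tau_{\Lambda^c}(b)\,a$ for local $a\in\clb_{\Lambda}$ and $b\in\clb_{\Lambda^c}$, and then extend it by continuity to all of $\clb$.

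Next I would check that the extension is a norm-one projection onto $\clb_{\Lambda}$. Because $\tau_{\Lambda^c}$ is a state, the slice map $\mathrm{id}\ot\tau_{\Lambda^c}$ is unital and completely positive, hence contractive; it restricts to the identity on $\clb_{\Lambda}\ot I$, so it is a conditional expectation with range exactly $\clb_{\Lambda}$ (the range is contained in $\clb_{\Lambda}$ by contractivity together with density of $\clb_{loc}$, and contains $\clb_{\Lambda}$ since the map fixes it pointwise). Trace preservation is then immediate on elementary tensors, $\tau_0(\IE_{\Lambda}(a\ot b))=\tau_{\Lambda}(a)\tau_{\Lambda^c}(b)=\tau_0(a\ot b)$, and extends by linearity and continuity to $\tau_0\circ\IE_{\Lambda}=\tau_0$.

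For the gauge covariance I would use that every $\beta_g$ with $g=(g_k)_{k\in\IZ}\in\ot_{\IZ}SU_d(\IC)$ acts factorwise, so it respects the factorization and decomposes as $\beta_g=\beta_g^{\Lambda}\ot\beta_g^{\Lambda^c}$ with $\beta_g^{\Lambda}\in\mathrm{Aut}(\clb_{\Lambda})$ and $\beta_g^{\Lambda^c}\in\mathrm{Aut}(\clb_{\Lambda^c})$. Since $\clb_{\Lambda^c}$ is a UHF algebra with a unique tracial state, $\tau_{\Lambda^c}\circ\beta_g^{\Lambda^c}$ is again a tracial state and hence equals $\tau_{\Lambda^c}$; equivalently this holds because $\beta_g^{\Lambda^c}$ is a product of conjugations and conjugations preserve the trace. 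Evaluating both composites on an elementary tensor then gives $\IE_{\Lambda}(\beta_g(a\ot b))=\tau_{\Lambda^c}(\beta_g^{\Lambda^c}(b))\,\beta_g^{\Lambda}(a)=\tau_{\Lambda^c}(b)\,\beta_g^{\Lambda}(a)=\beta_g(\IE_{\Lambda}(a\ot b))$, and density of $\clb_{loc}$ together with continuity upgrades this to $\IE_{\Lambda}\circ\beta_g=\beta_g\circ\IE_{\Lambda}$ on all of $\clb$.

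The only genuinely technical point, and the main obstacle, is the well-definedness of the slice map at the $C^*$ level for an arbitrary, possibly infinite, index set $\Lambda$: one must verify that $\mathrm{id}_{\clb_{\Lambda}}\ot\tau_{\Lambda^c}$ extends from $\clb_{loc}$ to a completely positive contraction on the tensor product $\clb_{\Lambda}\ot\clb_{\Lambda^c}$ (unique by nuclearity), and that its range lands inside the $C^*$-subalgebra $\clb_{\Lambda}$ rather than merely in its weak closure. This is exactly the content of the slice-map construction for nuclear $C^*$-algebras, and the range containment follows once one records that $\IE_{\Lambda}$ is contractive with $\IE_{\Lambda}(\clb_{loc})\seq\clb_{\Lambda}$ and that $\clb_{loc}$ is dense in $\clb$.
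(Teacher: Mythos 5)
Your proof is correct, but it takes a genuinely different route from the paper's. You construct $\IE_{\Lambda}$ concretely as the slice map (partial trace) $\mathrm{id}_{\clb_{\Lambda}}\ot\tau_{\Lambda^c}$ attached to the factorization $\clb\cong\clb_{\Lambda}\ot\clb_{\Lambda^c}$, verify everything on elementary tensors in $\clb_{loc}$, and extend by density; the paper instead obtains $\IE_{\Lambda}$ abstractly, as the unique map satisfying the trace duality $tr(z\,\IE_{\Lambda}(x))=tr(zx)$ for $z\in\clb_{\Lambda}$, invoking Takesaki's theorem on the existence of conditional expectations (the modular group of the trace being trivial, it preserves $\clb_{\Lambda}$), with the KMS-dual construction offered as an alternative, and then derives both the bimodule property and the commutation $\beta_g\IE_{\Lambda}=\IE_{\Lambda}\beta_g$ purely from that defining relation together with trace invariance under $\beta_g$ and $\beta_g(\clb_{\Lambda})=\clb_{\Lambda}$. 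Your approach buys self-containedness: the slice-map construction for the (nuclear, in fact unique) tensor norm is standard and elementary, and it exhibits $\IE_{\Lambda}$ explicitly, with the gauge covariance reduced to uniqueness of the tracial state on $\clb_{\Lambda^c}$; the one technical point you rightly flag, well-definedness of $\mathrm{id}\ot\tau_{\Lambda^c}$ on the completed tensor product with range in the norm-closed subalgebra $\clb_{\Lambda}$, is handled correctly by contractivity plus density of $\clb_{loc}$ (and does not actually need nuclearity, since the slice map exists on the minimal tensor product of arbitrary $C^*$-algebras). The paper's duality approach buys generality and uniqueness: it characterizes $\IE_{\Lambda}$ intrinsically via the faithful trace without choosing a tensor factorization, so the same one-line computation yields covariance under any trace-preserving automorphism leaving $\clb_{\Lambda}$ invariant, which is how the lemma gets reused later (e.g., in Lemma 4.1 and Appendix A). Your decomposition $\beta_g=\beta_g^{\Lambda}\ot\beta_g^{\Lambda^c}$ is legitimate since the gauge automorphisms act factorwise, so the two arguments agree in substance on covariance; they simply locate the work in different places.
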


\begin{proof} 
There exists a unique completely positive map $\IE_{\Lambda}:\IM \raro \IM_{\Lambda}$ satisfying 
\be 
tr(z\IE_{\Lambda}(x))=tr(zx)
\ee
for all $x \in \IM$ and $z \in \IM_{\Lambda}$. It follows trivially by a theorem of M. Takesaki [Ta1] since modular group being trivial preserves $\IM_{\Lambda}$. For an indirect proof, we can use duality argument used in [AC] to describe $\IE_{\Lambda}$ as the $KMS$-dual map of the inclusion map $i_{\Lambda}:z \raro z$ of $\IM_{\Lambda}$ in $\IM$. The modular group being trivial we get the simplified relation (26). That $\IE_{\Lambda}(z)=z$ for 
$z \in \IM_{\Lambda}$ is obvious by the faithful property of normalised trace. We may 
verify the bi-module property (24) directly: for all $y,z \in \IM_{\Lambda}$ and $x \in \IM$  
$$tr(z\IE_{\Lambda}(x)y)$$
$$=tr(yz\IE_{\Lambda}(x))$$
$$=tr(yzx)$$
$$=tr(zxy)$$
$$=tr(\IE_{\Lambda}(zx)y)$$
This shows that $\IE_{\Lambda}(zx)=z\IE_{\Lambda}(x)$ for all $z \in \IM_{\Lambda}$ and $x \in \IM$. By taking adjoint, we also get $\IE_{\Lambda}(xy)=\IE_{\Lambda}(x)y$ for all $y \in \IM_{\Lambda}$ and $x \in \IM$. Thus we arrive at the bi-module relation (24).  

\vsp 
Since $\beta_g(\IM_{\Lambda})=\IM_{\Lambda}$ for all $\Lambda$ for $g \in \otimes_{k \in \IZ} U_d(\IC)$, for all $z \in \IM_{\Lambda}$ and $x \in \IM$ we get 
$$tr(z \IE_{\Lambda} \beta_g(x))$$
$$=tr(z \beta_g(x))=tr(\beta_{g^{-1}}(z)x)$$
$$=tr(\beta_{g^{-1}}(z)\IE_{\Lambda}(x))$$
$$=tr(z \beta_g \IE_{\Lambda}(x))$$
So we get $\beta_g \IE_{\Lambda} = \IE_{\Lambda} \beta_g$ for all $g \in \otimes_{k \in \IZ}U_d(\IC)$. 
\end{proof} 

\vsp 
\begin{lem} 
Let $\beta$ be an $*$-automorphism on $\IM$ so that $\beta \theta=\theta \beta$. Then $\beta(\IM_R)=\IM_R$ if and only if $\beta=\beta_g$ for some $g \in SU_d(\IC)$.  
\end{lem}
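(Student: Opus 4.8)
The plan is to prove both implications, with the forward one routine and the reverse one carrying all the content. Throughout, $\beta_g$ for a single $g \in SU_d(\IC)$ denotes the product automorphism $\beta_g=\otimes_{j\in\IZ}\mathrm{Ad}(g)$ acting by the same conjugation $Q^{(j)}\mapsto (gQg^*)^{(j)}$ on every tensor factor. For the easy direction, if $\beta=\beta_g$ then $\beta$ acts factor-wise by one and the same map, so it commutes with the shift $\theta$ and fixes every local block $\clb_\Lambda$ setwise; in particular $\beta(\clb_R)=\clb_R$.

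For the reverse direction I would assume $\beta\theta=\theta\beta$ and $\beta(\clb_R)=\clb_R$ and first reduce to a site-wise action. Since $\clb_{[m,\infty)}=\theta^{m-1}(\clb_R)$, covariance gives $\beta(\clb_{[m,\infty)})=\theta^{m-1}\beta(\clb_R)=\clb_{[m,\infty)}$ for every $m\in\IZ$, so $\beta$ preserves all right half-lines. To reach the left tails I would use relative commutants: because the right-tail UHF algebra $\clb_{[m+1,\infty)}$ is simple, hence has trivial center, its relative commutant in $\clb$ is exactly $\clb_{(-\infty,m]}$ (the commutation theorem for a tensor factor with simple complement). An automorphism that preserves a subalgebra setwise preserves its relative commutant setwise, so $\beta(\clb_{(-\infty,m]})=\clb_{(-\infty,m]}$ for all $m$ as well.

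Next I would intersect. The trace-preserving conditional expectations $\IE_{[m,\infty)}$ and $\IE_{(-\infty,m]}$ of Lemma~3.2 trace out disjoint collections of factors, so they commute and their composite is $\IE_{\{m\}}$; consequently any $x$ lying in both $\clb_{[m,\infty)}$ and $\clb_{(-\infty,m]}$ satisfies $x=\IE_{[m,\infty)}\IE_{(-\infty,m]}(x)=\IE_{\{m\}}(x)$, giving $\clb_{[m,\infty)}\cap\clb_{(-\infty,m]}=\clb_{\{m\}}=M^{(m)}_d(\IC)$. Since $\beta$ fixes both intersectands, $\beta(M^{(m)}_d(\IC))=M^{(m)}_d(\IC)$ for every $m$. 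As every automorphism of $M_d(\IC)$ is inner, write $\beta$ on $M^{(m)}_d(\IC)$ as $\mathrm{Ad}(v_m^{(m)})$ for unitaries $v_m\in M_d(\IC)$. Applying $\beta\theta=\theta\beta$ to $Q^{(m)}$ yields $(v_mQv_m^*)^{(m+1)}=(v_{m+1}Qv_{m+1}^*)^{(m+1)}$ for all $Q$, so $v_{m+1}^*v_m$ is central and $\mathrm{Ad}(v_m)$ is independent of $m$. Rescaling by a phase to fix the determinant, I may take a common $g\in SU_d(\IC)$ with $\beta=\mathrm{Ad}(g^{(m)})$ on each site. Then $\beta$ and $\beta_g$ agree on every $M^{(m)}_d(\IC)$, hence on the dense $*$-subalgebra $\clb_{loc}$ they generate, and by continuity $\beta=\beta_g$ on all of $\clb$.

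The main obstacle is the locality step, namely forcing the global hypotheses to confine $\beta$ to each single-site algebra. This rests on the relative-commutant identity for the half-line algebras, whose validity hinges on the triviality of the center of the infinite UHF tails; this is the one place where the infinite tensor-product structure is genuinely used, and it must be invoked for every $m\in\IZ$ so that both the right and left filtrations are preserved. Once site-preservation is established, the shift-covariance pins the local unitaries to a single $g$ and the remaining extension by density is routine.
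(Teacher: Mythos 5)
Your proof is correct and follows essentially the same route as the paper: both arguments rest on the relative commutant identity $\clb_R'\cap\clb=\clb_L$ (which the paper proves via the tracial conditional expectations of Lemma 3.2, and which you invoke as the tensor commutation theorem), deduce that $\beta$ preserves both half-line algebras, isolate the single-site factor (your intersection $\clb_{[m,\infty)}\cap\clb_{(-\infty,m]}=M_d^{(m)}$ is just the commutant-of-the-join step $(\theta(\clb_R)\vee\clb_L)'=M_d^{(1)}$ in the paper, rewritten via De Morgan), and finish by innerness of automorphisms of $M_d(\IC)$ plus shift covariance to pin down a single $g\in SU_d(\IC)$. Your write-up is somewhat more explicit at the last step (the unitaries $v_m$, centrality of $v_{m+1}^*v_m$, and the determinant normalization), but this only fills in details the paper leaves implicit.
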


\vsp 
\begin{proof} We claim that $\IM_R'=\{x \in \IM:xy=yx,\;y \in \IM_R\}$ is equal to $\IM_L$. For a proof we use conditional expectation $\IE_{\Lambda}$. Let $\Lambda_n$ be the subset $\{k:-n \le k \le n \}$ of $\IZ$ for each $n \ge 1$. We fix any $x \in \IM_R'$ and note that the element $\IE_{\Lambda_n}(x) \in \IM_{\Lambda_n}$ and 
$$y\IE_{\Lambda_n}(x)$$
$$=\IE_{\Lambda_n}(yx)$$
$$=\IE_{\Lambda_n}(xy)$$
$$=\IE_{\Lambda_n}(x)y$$
for all $y \in \IM_{R_n} \subset \IM_R$, where $R_n=\{k: 1 \le k \le n \}$.  
Thus $\IE_{\Lambda_n}(x) \in \IM_{L_n}=\IM_{\Lambda_n} \bigcap \IM_{R_n}'$, 
where $L_n=\{-n \le k \le 0 \}$. Since $||x-\IE_{\Lambda_n}(x)|| \raro 0$ as $n \raro \infty$, we conclude that $x \in \IM_{L}$.   

\vsp 
Since $\theta$ commutes with $\beta$, we get $\beta(\theta(\IM_R))=\theta(\beta(\IM_R))=
\theta(\IM_R)$. $\beta(\IM_L)=\beta(\IM_R')=\beta(\IM_R)'=\IM_R'=\IM_L$. This shows that
$\beta(\theta(\IM_R)\vee \IM_L)=\theta(\IM_R) \vee \IM_L$. Taking commutant we get
$\beta(M^{(1)}_d(\IC))=M^{(1)}_d(\IC)$. Thus $\beta=\beta_g$ for some $g$ on  $M_d^{(1)}(\IC)$.
Since $\beta$ commutes with $\theta$, we get $\beta=\beta_g$ on $\IM$.  
\end{proof}

\vsp 
\begin{lem}
Let $\omega$ and $\omega'$ be two translation invariant states of $\IM$. Two dynamical systems of endomorphisms $(\IM_R,\theta_R,\omega_R)$ and $(\IM_R,\theta_R,\omega'_R)$ are isomorphic if and only if
there exists a $g \in U_d$ such that $\omega'=\omega  \beta_g$.
\end{lem}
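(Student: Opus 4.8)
The plan is to establish the two implications separately, treating the reverse implication as a quick verification and concentrating the work on the forward implication, whose core is an extension-and-rigidity argument. Throughout I would use that $\beta_g$ acts by the same conjugation $\mathrm{Ad}(g)$ on every tensor factor, so that it preserves $\clb_R$ and commutes with the shift, $\theta\beta_g = \beta_g\theta$.

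For the reverse implication, suppose $\omega' = \omega \circ \beta_g$ for some $g \in U_d$. Putting $\beta := \beta_g|_{\clb_R}$ gives an automorphism of $\clb_R$ with $\theta_R \beta = \beta \theta_R$, and restricting the global identity $\omega' = \omega \beta_g$ to $\clb_R$ (using $\beta_g(\clb_R)=\clb_R$) yields $\omega'_R = \omega_R \beta$. Thus $(\clb_R,\theta_R,\omega_R)$ and $(\clb_R,\theta_R,\omega'_R)$ are isomorphic with intertwining automorphism $\beta$, as required.

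For the forward implication, assume an automorphism $\beta:\clb_R \raro \clb_R$ with $\theta_R \beta = \beta \theta_R$ and $\omega'_R = \omega_R \beta$. The first step is to promote $\beta$ to a global automorphism. Since $\theta^{-n}(\clb_R) = \clb_{[1-n,\infty)}$ is increasing with dense union in $\clb$, I would set $\beta^*|_{\theta^{-n}(\clb_R)} := \theta^{-n}\beta\theta^{n}$; the intertwining relation $\theta_R\beta = \beta\theta_R$ makes these definitions agree on overlaps, so they assemble into the inductive-limit automorphism $\beta^*$ of $\clb$ already invoked in the introduction. By construction $\theta\beta^* = \beta^*\theta$, and $\beta^*(\clb_R) = \beta(\clb_R) = \clb_R$ because $\beta$ is onto; hence Lemma 3.3 applies and forces $\beta^* = \beta_g$ for some unitary $g$, which lies in $U_d$. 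It then remains to lift the equality from $\clb_R$ to all of $\clb$: both $\omega'$ and $\omega\beta_g$ are translation-invariant states (the latter because $\beta_g$ commutes with $\theta$), and they agree on $\clb_R$ since $\omega'_R = \omega_R\beta = (\omega\beta_g)|_{\clb_R}$. But a translation-invariant state is determined by its restriction to $\clb_R$: for a local $x \in \clb_{[-m,n]}$ one has $\theta^{m+1}(x) \in \clb_R$, whence $\omega(x) = \omega(\theta^{m+1}(x))$ is read off from $\clb_R$, and density of $\clb_{loc}$ finishes the matter. Therefore $\omega' = \omega\beta_g$.

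I expect the only genuinely delicate point to be the first step of the forward direction, namely checking that the local prescriptions $\theta^{-n}\beta\theta^{n}$ are mutually compatible and thereby define a bona fide automorphism of all of $\clb$; once that global $\theta$-commuting automorphism is in hand, the application of Lemma 3.3 and the translation-invariance rigidity of states are routine.
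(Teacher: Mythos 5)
Your proof is correct and follows essentially the same route as the paper: you extend the intertwining automorphism of $\clb_R$ to a $\theta$-commuting automorphism of $\clb$ via the inductive-limit construction, invoke Lemma 3.3 to identify it with some $\beta_g$, and lift $\omega'_R=\omega_R\beta$ to $\omega'=\omega\beta_g$ using the fact that a translation-invariant state is determined by its restriction to $\clb_R$. The only difference is that you spell out the compatibility of the local prescriptions $\theta^{-n}\beta\theta^{n}$ and the translation-invariance argument, both of which the paper leaves implicit in its appeal to the universal property of the inductive limit.
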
 

\vsp
\begin{proof} 
Let $\alpha_R$ be a $C^*$-isomorphism on $\IM_R$ such that $\theta_R \alpha_R =\alpha_R \theta_R$ and $\omega'=\omega \alpha_R$. Commuting property clearly shows that $\alpha_R$ takes $C^*$-subalgebra $\theta(\IM_R)$ to itself. Thus we get an automorphism $\beta:\IM \raro \IM$, being the inductive limit of endomorphism on $C^*$-algebra $\IM_R \raro^{\theta_R} \IM_R$, such that
$\beta_{|}\IM_R=\alpha_R$ and $\omega'=\omega \beta$. Now by Lemma 3.2 we complete the 
proof.
\end{proof} 

\vsp 
\begin{cor} 
Mean entropy $s(\omega)$ of a translation invariant state $\omega$ is a dynamical invariant for the dynamics $(\IM_R,\theta_R,\omega_R)$. 
\end{cor}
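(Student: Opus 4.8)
The plan is to reduce the statement to Lemma 3.4 together with the unitary invariance of the von Neumann entropy. First I would record that for a translation invariant state $\omega$ the mean entropy is already intrinsic to the one sided datum $(\clb_R,\theta_R,\omega_R)$. Using translation invariance and subadditivity of the von Neumann entropy, the sequence $n \mapsto S_{\omega_{[1,n]}}$ is subadditive, namely $S_{\omega_{[1,n+m]}} \le S_{\omega_{[1,n]}} + S_{\omega_{[1,m]}}$, so by Fekete's lemma the limit $\lim_n \tfrac1n S_{\omega_{[1,n]}}$ exists and, by the standard theory of mean entropy for lattice systems, coincides with $s(\omega)$ computed along the Van Hove sequence $\Lambda_n \uparrow \IZ$. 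In particular $s(\omega)$ is a function of the restricted states $\omega_{[1,n]}$ of $\clb_R$ alone, hence a well defined quantity attached to $(\clb_R,\theta_R,\omega_R)$.

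Next I would invoke Lemma 3.4: if $(\clb_R,\theta_R,\omega_R)$ and $(\clb_R,\theta_R,\omega'_R)$ are isomorphic, then there is $g \in U_d$ with $\omega' = \omega \circ \beta_g$. The crucial structural observation is that $\beta_g$ is \emph{local}. Since $\mathrm{Ad}(g)=\mathrm{Ad}(zg)$ for any phase $z$, one may normalise $g$ to lie in $SU_d(\IC)$, so $\beta_g$ is the product automorphism acting by $\mathrm{Ad}(g)$ at each site and, exactly as in the proof of Lemma 3.2, it preserves every local algebra. In particular $\beta_g(\clb_{[1,n]})=\clb_{[1,n]}$ and, under the identification $\clb_{[1,n]}\cong M_{d^n}(\IC)$, the restriction $\beta_g|_{\clb_{[1,n]}}$ is the inner automorphism $\mathrm{Ad}(u_n)$ given by conjugation with the unitary $u_n=g^{\otimes n}$.

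I would then conclude the entropy equality block by block. The density matrix of $\omega'_{[1,n]}=\omega_{[1,n]}\circ \mathrm{Ad}(u_n)$ is $u_n^{*}\rho_n u_n$, where $\rho_n$ is the density matrix of $\omega_{[1,n]}$; being unitarily conjugate, these two matrices are isospectral, so $S_{\omega'_{[1,n]}}=S_{\omega_{[1,n]}}$ for every $n$. Dividing by $n$ and passing to the limit gives $s(\omega')=s(\omega)$. Equivalently, $\beta_g$ being local, this is precisely the remark following Theorem 2.3 that a local isomorphism preserves mean entropy. Hence $s$ is constant on isomorphism classes of one sided dynamics, which is the assertion.

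The routine input is the unitary invariance of $S(\rho)=-\mathrm{tr}(\rho\log\rho)$, which is immediate once one knows that $\beta_g$ acts innerly on each finite block. I expect the only point requiring genuine care to be the first step, namely the reduction to one sided boxes: one must check that the limit along $[1,n]$ agrees with the Van Hove definition so that $s(\omega)$ depends on $\omega_R$ only, and it is there that subadditivity and translation invariance are really used.
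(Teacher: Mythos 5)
Your proof is correct and follows the paper's own route exactly: the paper's entire argument is ``since $s(\omega)=s(\omega\beta_g)$ for any $g\in SU_d(\IC)$, the result follows by Lemma 3.4,'' and you have simply supplied the details it leaves implicit, namely that $\beta_g$ acts on each block $\clb_{[1,n]}$ as $\mathrm{Ad}(g^{\otimes n})$ so the block entropies are unitarily invariant, plus the (correct and worthwhile) preliminary check that the one-sided limit along $[1,n]$ agrees with the Van Hove definition, making $s(\omega)$ a function of $\omega_R$ alone.
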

\vsp 
\begin{proof} 
Since $s(\omega)=s(\omega \beta_g)$ for any $g \in SU_d(\IC)$, result follows by 
Lemma 3.3. 
\end{proof} 

\vsp 
\begin{lem}
Let $\beta:\IM \raro \IM$ be an automorphism such that $\omega'=\omega \beta$ on $\IM$ and $\beta  \theta =\theta  \beta$. If $\beta(\IM_R) \subseteq \theta^{k'}(\IM_R)$ for some $k' \in \mathbb{Z}$ then $\beta = \theta^k  \beta_g$ for some $g \in U_d(\IC)$ and 
$k' \le k$.
\end{lem} 

\vsp 
\begin{proof} 
We choose $k$ to be the largest possible value for $k'$ satisfying $\beta(\IM_R) \subseteq 
\theta^{k'}(\IM_R)$. We claim that $\beta(\IM_R)=\theta^k(\IM_R)$. Without loss of generality we assume that $k=0$, otherwise we reset $\beta$ to $\theta^{-k}\beta$ as new automorphism. Thus we have an automorphism $\beta$ on $\IM$ such that $\beta(\IM_R) \subseteq \IM_R$ and $\beta(\IM_R)$ is not a subset of $\theta(\IM_R)$. We need to show $\beta(\IM_R)=\IM_R$. For a $C^*$ sub-algebra $\cla$ of $\IM_R$, we write in the following $\cla'=\{x \in \IM_R: xy=yx,\;y \in \cla \}$ 
and $\cla_1 \vee \cla_2$ is used to denote $C^*$-algebra generated by $\cla_1$ and $\cla_2$. We consider the sub-algebra 
$$\IM^{(1)} = \beta(\IM_R) \bigcap \theta(\IM_R)' \subseteq M^{(1)}_d(\IC)$$ 
We claim that 
$$\beta(\IM_R) \bigcap \theta(\IM_R)' \neq \{zI^{(1)}_d:\;z \in \IC \}$$ 
Since otherwise we would have $\beta(\IM_R)' \vee \theta(\IM_R) = \IM_R$ and so 
$$(\beta(\IM_R)' \vee \theta(\IM_R)) \bigcap \theta(\IM_R)'=M_d^{(1)}(\IC)$$
However 
$$(\beta(\IM_R)'\vee \theta(\IM_R)) \bigcap \theta(\IM_R)' \supseteq \beta(\IM_R)' \bigcap \theta(\IM_R)'$$
$\beta(\IM_R) \vee \theta(\IM_R) \subseteq M_d^{(1)}(\IC)'= \theta(\IM_R)$ which says that $\beta(\IM_R) \subseteq \theta(\IM_R)$. This brings a contradiction and proves our claim.     

\vsp 
We also claim that $\IM^{(1)}$ is a factor. To that end we compute commutant of $\IM^{(1)}$ which is equal to $\beta(\IM'_R) \vee \theta(\IM_R)$ and any element in the centre of $\IM^{(1)}$ is also an element in $\beta(\IM'_R)$. Since $\IM_R$ is a factor, we conclude
that $\IM^{(1)}$ is a sub-factor of $M^{(1)}(\IC)$. 

\vsp 
Going along the same line, now we set sub-factor 
$\IM^{(n)}) = \beta(\IM_R) \bigcap \theta^n(\IM_R)' \subseteq M^{(1)}_d(\IC) \otimes M^{(2)}_d(\IC)..\otimes M^{(n)}_d(\IC)$. $M^{(n)}$ is a sub-factor of $\IM^{(n+1)}$ and we 
may write $\IM^{(n)}=\otimes_{1 \le k \le n} N^{(k)},$ 
where $N^{(k)}$ are sub-factors of $M^{(k)}(\IC)$ for each $k \ge 1$. 
We claim that $\vee _{n \ge 1} \IM^{(n)} = \beta(\IM_R)$. That $\vee_{n \ge 1} \IM^{(n)} \subseteq \beta(\IM_R)$ obvious. For the reserves inclusion, we fix any element $x \in \beta(\IM_R)$ and consider 
the sequence of elements $\IE_{\Lambda_n}(x):n \ge 1$, where $\Lambda_n= L \bigcup \{ 1,2,..,n \} \subset \IZ$. Since $\IE_{\Lambda}(\beta(\IM_R))= \beta(\IM_R) \bigcap \IM_{\Lambda}$ for any $\Lambda \subseteq \IZ$, we get 
$\IE_{\Lambda_n}(x) \in \beta(\IM_R) \bigcap \IM_{\Lambda_n} = \beta(\IM_R) \bigcap \theta^n(\IM_{\Lambda_R})'$. Since $||\IE_n(x)-x|| \raro 0$ as $n \raro \infty$, 
$x \in \vee_{n \ge 1} \IM^{(n)}$. Thus we deduce that  
$$\beta(\IM_R) = \otimes_{k \in R}N^{(k)}$$   
Furthermore, since $\theta(\beta(\IM_R))=\beta(\theta(\IM_R))$, we get 
$\theta(N^{(k)}) \subseteq N^{(k+1)}$. 

\vsp 
However the inductive limit $C^*$ algebra of $\beta(\IM_R) \raro^{\theta} \beta(\IM_R)$ is $\beta(\IM)$ equal to $\IM$. Thus we get $N^{(k)}=M^{(k)}(\IC)$ for all $k \ge 1$. This shows in particular $\beta(\IM_R)=\IM_R$. Thus result follows by Lemma 3.2. 
\end{proof}

\vsp 
\begin{thm}
Let $\beta$ be an $*$-automorphism on $\IM$ such that $\beta \theta=\theta \beta$. Then 
$\beta(\IM_{loc}) \subseteq \IM_{loc}$ if and only if $\beta= \theta^k \beta_g$ for some
$k \in \mathbb{Z}$ and $g \in U_d(\IC)$.  
\end{thm}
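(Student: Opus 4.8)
The plan is to prove the two implications separately, with all of the structural depth packaged into Lemma 3.6, whose hypothesis I verify by a covariance argument.

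For the easy direction, suppose $\beta = \theta^k \beta_g$. Since $\beta_g$ acts as conjugation by $g$ on each tensor factor it satisfies $\beta_g(\clb_\Lambda) = \clb_\Lambda$ for every finite $\Lambda$, while $\theta^k(\clb_\Lambda) = \clb_{\Lambda + k}$. Hence $\beta(\clb_\Lambda) = \clb_{\Lambda+k} \subseteq \clb_{loc}$, so $\beta(\clb_{loc}) \subseteq \clb_{loc}$; moreover both factors commute with $\theta$, so $\beta\theta = \theta\beta$. This requires only routine verification.

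For the converse, assume $\beta\theta = \theta\beta$ and $\beta(\clb_{loc}) \subseteq \clb_{loc}$. First I would localize the image of a single site. The algebra $M^{(0)}_d(\IC) = \clb_{\{0\}}$ is finite dimensional, so each of its finitely many matrix units is carried by $\beta$ into some $\clb_{\Lambda}$ with $|\Lambda| < \infty$; taking the finite union of these supports and enlarging to a symmetric interval yields an $m \ge 0$ with $\beta(M^{(0)}_d(\IC)) \subseteq \clb_{[-m,m]}$. Next I invoke covariance: since $M^{(j)}_d(\IC) = \theta^j(M^{(0)}_d(\IC))$ and $\beta\theta^j = \theta^j\beta$, I get $\beta(M^{(j)}_d(\IC)) = \theta^j(\beta(M^{(0)}_d(\IC))) \subseteq \clb_{[j-m,\,j+m]}$ for every $j$. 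For $j \ge 1$ all these subalgebras lie in $\clb_{[1-m,\infty)}$, and since $\clb_R$ is generated as a $C^*$-algebra by $\{M^{(j)}_d(\IC) : j \ge 1\}$ and $\beta$ is an isometric $*$-homomorphism, it follows that $\beta(\clb_R) \subseteq \clb_{[1-m,\infty)} = \theta^{-m}(\clb_R)$. This is precisely the hypothesis of Lemma 3.6 with $k' = -m$ (the state in that lemma plays no role in its conclusion, so one may take $\omega$ to be the canonical trace and $\omega'=\omega\beta$), which delivers $\beta = \theta^k \beta_g$ for some $k \in \IZ$ and $g \in U_d(\IC)$.

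The main obstacle, and the step I would treat most carefully, is the passage from the single bound $\beta(M^{(0)}_d(\IC)) \subseteq \clb_{[-m,m]}$ to the containment of $\beta(\clb_R)$ in a one-sided tail algebra $\theta^{-m}(\clb_R)$. The point is that the finite width $m$ propagates \emph{uniformly} across all sites precisely because $\beta$ intertwines the shift, so that the supports do not accumulate as $j \to \infty$; this uniform bound is what forces the image to sit inside a shifted copy of $\clb_R$ rather than merely inside $\clb_{loc}$. Once this localization is secured, the delicate sub-factor decomposition is already carried out inside Lemma 3.6, and no further structural analysis is needed.
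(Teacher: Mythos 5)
Your proposal is correct and follows essentially the same route as the paper: localize the image of a single site inside some $\clb_{[-m,m]}$ using finite-dimensionality, propagate the bound by shift-covariance to obtain $\beta(\clb_R)\subseteq\theta^{-m}(\clb_R)$, and then invoke Lemma 3.6. You merely spell out two points the paper leaves implicit --- the matrix-unit argument localizing $\beta(M^{(0)}_d(\IC))$ and the observation that the states $\omega,\omega'$ in Lemma 3.6 play no role in its conclusion --- both of which are accurate.
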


\vsp 
\begin{proof} 
If part is trivial. For `only if' part we fix an local automorphism $\beta$ on $\IM$.  Since it preserves local $*$-algebra, we have $\beta(\IM_{\{1\}}) \subseteq \IM_{[k,l]}$ for some $k \le l$ in $\mathbb{Z}$. Hence $\beta(\IM_R) \subseteq \IM_{[k,\infty)}$ by commuting property of $\theta$ with $\beta$. Now we use Lemma 3.5 to complete the proof. 
\end{proof}

\vsp 
\begin{thm} 
An automorphism $\beta:\clo_d \raro \clo_d$ commutes with canonical endomorphism $\lambda$ if and only if $\beta =\beta_g$ for some $g \in SU_d(\IC)$ on $\mbox{UHF}_d$. Furthermore, the following holds:

\NI (a) $\beta$ commutes also with $(\beta_z:z \in S^1)$ if and only if $\beta=\beta_g$ for some $g \in SU_d(\IC)$; 

\NI (b) Let $(\clo_d,\lambda,\psi)$ and $(\clo_d,\lambda,\psi')$ be two ergodic semi-group of endomorphisms so that their restrictions to $\mbox{UHF}_d$ be also ergodic. 
Then they are isomorphic if and only if there exists a $g \in U_d(\IC)$ such that 
$\psi'= \psi  \beta_g$.  
\end{thm}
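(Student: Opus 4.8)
The plan is to handle the main equivalence first and then treat the two refinements (a) and (b) as corollaries of the methods already assembled in Section 3. For the principal claim, the nontrivial direction is to show that an automorphism $\beta:\clo_d \raro \clo_d$ commuting with $\lambda$ must restrict on $\mbox{UHF}_d$ to some $\beta_g$ with $g \in SU_d(\IC)$. First I would observe that $\lambda$ extends the right shift $\theta_R$ under the identification of $\mbox{UHF}_d$ with $\clb_R$, so the relation $\beta\lambda = \lambda\beta$ forces $\beta$ to commute with $\theta_R$ on $\mbox{UHF}_d$. The key structural input is that $\beta$ must preserve the local filtration: since $\lambda(\mbox{UHF}_d)=\theta_R(\clb_R)$ sits inside $\clb_R$ with the tower $\lambda^n(\mbox{UHF}_d)=\theta_R^n(\clb_R)$ decreasing to $\IC$, commutation with $\lambda$ pins down how $\beta$ acts on each relative commutant. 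Concretely, I would feed $\beta_R = \beta_{|}\clb_R$ into Lemma 3.6: because $\beta_R$ commutes with $\theta_R$ and automatically satisfies $\beta_R(\clb_R)\subseteq\clb_R$ (as $\beta$ is an automorphism of $\clo_d$ fixing $\mbox{UHF}_d$ setwise, being generated by the isometries whose products span $\mbox{UHF}_d$), Lemma 3.6 yields $\beta_R = \theta_R^k \circ \beta_g$ for some $g \in U_d(\IC)$ and $k \in \IZ$. The constraint $\beta_R(\clb_R)=\clb_R$ (not merely inside $\theta_R(\clb_R)$) forces $k=0$, so $\beta = \beta_g$ on $\mbox{UHF}_d$, giving the stated equivalence.

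For part (a), once we know $\beta = \beta_g$ on $\mbox{UHF}_d$, the extra hypothesis that $\beta$ commutes with the gauge action $(\beta_z : z \in S^1)$ should rigidify the phase. The plan is to use the fact that the fixed-point algebra of the gauge action is exactly $\mbox{UHF}_d$, while $\beta$ acts on the degree-one spectral subspace spanned by the $s_i$. Commutation with $\beta_z$ means $\beta$ preserves each spectral subspace, so $\beta(s_i)=\sum_j h^i_j s_j$ for a matrix $h$; compatibility with $\lambda$ and with the already-determined action $\beta_g$ on $\mbox{UHF}_d$ forces $h=g$ up to the requirement $\det$-normalization, yielding $g \in SU_d(\IC)$. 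Here I would lean on the universal property of $\clo_d$ recorded in the excerpt to conclude that the matrix action determines $\beta$ globally.

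For part (b), the strategy is to convert the isomorphism statement for the $C^*$-dynamical systems $(\clo_d,\lambda,\psi)$ into the established picture via the chain of correspondences displayed in the introduction, which ties $(\clo_d,\lambda,\psi)$ to $(\clb_R,\theta_R,\omega_R)$. An isomorphism of the two ergodic systems gives an automorphism commuting with $\lambda$ and intertwining the states, so by the main part of the theorem it restricts to some $\beta_g$ on $\mbox{UHF}_d$, and the intertwining of states reads $\psi' = \psi\circ\beta_g$; conversely $\beta_g$ commutes with $\lambda$ by the relation $\lambda\beta_g=\beta_g\lambda$ noted after equation (2), so $\psi'=\psi\circ\beta_g$ produces the required isomorphism. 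Lemma 3.4 is the classical-side analogue I would invoke to match the $\mbox{UHF}_d$ restriction $\omega_R$ back to the Cuntz state $\psi$, using the cited result from [BJKW] that extremal elements of $K_\omega$ differ only by a gauge $\beta_{zI_d}$.

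The main obstacle I anticipate is the rigidity step inside the main equivalence: showing that commutation with $\lambda$ actually forces $\beta(\clb_R)=\clb_R$ and not merely $\beta(\clb_R)\subseteq\theta_R^{k}(\clb_R)$ for some shift, i.e. that no genuine shift component can hide in $\beta$. The delicate point is that an automorphism of $\clo_d$ need not preserve $\mbox{UHF}_d$ a priori, so I would first have to argue—likely again through the gauge-invariance characterization of $\mbox{UHF}_d$ together with the $\lambda$-commutation—that $\beta(\mbox{UHF}_d)=\mbox{UHF}_d$ before Lemma 3.6 becomes applicable. Pinning this down cleanly, rather than the subsequent bookkeeping with relative commutants, is where the real work lies.
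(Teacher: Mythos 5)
Your skeleton matches the paper's (reduce to $\mbox{UHF}_d$, gauge rigidity for (a), the [BJKW] gauge-uniqueness for (b)), but the central step is genuinely missing, not merely deferred. Everything hinges on feeding $\beta_R$ into Lemma 3.6, which presupposes $\beta(\mbox{UHF}_d)=\mbox{UHF}_d$; your parenthetical justification (``$\beta$ fixes $\mbox{UHF}_d$ setwise, being generated by the isometries whose products span $\mbox{UHF}_d$'') is circular, since $\beta$ moves the isometries themselves and a priori $\beta(s_is_j^*)$ need not lie in $\mbox{UHF}_d$ at all. You then concede this is ``where the real work lies,'' but the fallback you propose --- the gauge-invariance characterization of $\mbox{UHF}_d$ --- cannot work for the main equivalence, because commutation with the gauge action $(\beta_z:z \in S^1)$ is exactly the \emph{additional} hypothesis of part (a); in the main claim you only have $\beta \lambda = \lambda \beta$. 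The paper closes the gap with a short relative-commutant computation that your sketch gestures at but never performs: by the Cuntz relations and $\clo_d \bigcap \clo_d' = \IC$ one gets $\lambda(\clo_d)' \bigcap \clo_d = \mbox{span}\{s_is_j^*:1 \le i,j \le d\}$, and more generally $\lambda^n(\clo_d)' \bigcap \clo_d = \mbox{span}\{s_Is_J^*:|I|=|J|=n\} \cong \!M_{d^n}(\IC)$. Since $\beta$ is onto and commutes with $\lambda$, $\beta(\lambda^n(\clo_d))=\lambda^n(\clo_d)$, so $\beta$ preserves each of these finite-dimensional relative commutants, whose union is dense in $\mbox{UHF}_d$. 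This yields at one stroke $\beta(\mbox{UHF}_d)=\mbox{UHF}_d$ \emph{and} $\beta(\clb_{[1,n]})=\clb_{[1,n]}$ for every $n$, i.e. $\beta(\clb_R)=\clb_R$ exactly; the shift exponent $k$ in your Lemma 3.6 detour never arises, and the Lemma 3.3/3.4 machinery gives $\beta=\beta_g$ on $\mbox{UHF}_d$ directly.

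Two secondary points. In (a) your spectral-subspace argument is too quick: the degree-one spectral subspace of the gauge action is not $\mbox{span}\{s_i\}$ --- generic degree-one elements have the form $\sum_i x_is_i$ with $x_i \in \mbox{UHF}_d$ --- so commutation with $(\beta_z)$ alone does not force $\beta(s_i)=\sum_j h^i_js_j$ with scalar $h$. The paper instead passes to $\beta'=\beta_{g^{-1}} \circ \beta$, which commutes with $\lambda$ and is the identity on the fixed-point algebra $\mbox{UHF}_d$, and invokes Theorem 4.2 of [BE] to conclude $\beta'=\beta_w$ for some $w \in S^1$. In (b), your sentence ``the intertwining of states reads $\psi'=\psi \circ \beta_g$'' skips a step: the main part only gives $\beta=\beta_{g'}$ on $\mbox{UHF}_d$, not on $\clo_d$, so $\psi'$ and $\psi \circ \beta_{g'}$ merely agree on $\mbox{UHF}_d$; one still needs Proposition 7.6 of [BJKW] (this is where the ergodicity hypotheses enter, via factoriality/extremality in $K_{\omega}$) to produce $w \in S^1$ with $\psi'=\psi \beta_{g'}\beta_w$, after which $g=wg'$ finishes. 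You cite the right [BJKW] fact, so (b) is repairable, but as written the deduction is incomplete.
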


\begin{proof} 
We claim that $\lambda(\clo_d)'=\{s_is_j^*:1 \le i,j \le d \}$. That $s_is_j^* \in \lambda(\clo_d)'$ is obvious by Cuntz relation. For 
converse we take an element $x \in \lambda(\clo_d)'$ i.e. $x\lambda(y)=\lambda(y)x$ for all $y \in \clo_d$. Then by Cuntz relation $s^*_ixs_jy=ys_i^*xs_j$ i.e. $s^*_ixs_j \in \clo'_d$. Since 
we have $\clo_d \bigcap \clo_d'=\IC$, we conclude that $s^*_ixs_j =\mu^i_jI$ for some $\mu^i_j \in \IC$. Thus $x = \sum \mu^i_js_is_j^*$. Similarly $\lambda^n(\clo_d)' = \mbox{span of} \{ s_Is_J^*:|I|=|J|=n \}$. So for an automorphism $\beta$ on $\clo_d$ which commutes with $\lambda$ we have $\beta(\lambda^n(\clo_d))=\lambda^n(\clo_d)$  and $\beta(\lambda^n(\clo_d)')=\lambda^n(\clo_d)'$ for all $n \ge 1$. Thus we have in 
particular $\beta(\mbox{UHF}_d)=\mbox{UHF}_d$ and $\beta=\beta_g$ for some $g \in SU_d(\IC)$ 
on $\mbox{UHF}_d$. So we have an automorphism $\beta'=\beta_{g^{-1}}  \beta$ on $\clo_d$ commuting with $\lambda$ and it is the identity map on its restriction to $\mbox{UHF}_d$. 

\vsp 
We claim that $\beta'=\beta_w$ for some $w \in S^1$ if $\beta$ and so $\beta'$ commutes with 
$\{\beta_z:z \in S^1)$. This follows by a simple application of Theorem 4.2 in [BE] since $\mbox{UHF}_d$ is the fixed point $C^*$-sub-algebra of $\clo_d$ of the compact abelian group action $(\beta_z:z \in S^1)$ [Cun]. This completes the proof of (a).      

\vsp 
For (b) let $\beta$ be an automorphism that commutes with $\lambda$ and $\psi=\psi' \beta$. Let $\omega$ and $\omega'$ be the restrictions of $\psi$ and $\psi'$ respectively to $\mbox{UHF}_d$. Then we have $\omega = \omega' \beta_{g'}$ on $\mbox{UHF}_d$ for some $g' \in SU_d(\IC)$ by first part of the statement of present corollary. Now we apply Proposition 7.6 in [BJKW] ( Proposition 2.6 in [Mo5] ) for existence of $w \in S^1$ such that $\psi'= \psi \beta_{g'} \beta_w$. We take $g= g' wI_d$ to complete the proof.        
\end{proof} 

\vsp 
We end this section with a proof of Proposition 1.1. 

\vsp 
\begin{proof} 
\vsp 
Let $\IM_0$ be a $C^*$-algebra of $\IM$ such that 
$\IM_0''=\IM_0$ and $\theta^{-1}(\IM_0) \subseteq \IM_0$. Then the mutually commuting family of $C^*$-algebras $\IM^{(n)} = \theta^n(\IM_0) \bigcap \theta^{n-1}(\IM_0)'$ are isomorphic copies of $\IM^0$ with $\IM^{(n)}= \theta^n(\IM^{(0)})$. 
We claim that 
$$\vee_{n \in \IZ} \IM^{(n)} = \IM$$ 
For a proof, let $x$ be an element in the commutant of $\vee_{n \in \IZ} \IM^{(n)}$. Let $\IE_0$ be the conditional
expectation onto $\IM_0$ with respect to the normalized trace on $\IM$. Then $\IE_0(x) \in \IM_0$ and for any $y \in \IM^{(0)}$ we have 
$$\IE_0(x)y$$
$$=\IE_0(xy)$$
$$=\IE_0(yx)$$
$$=y\IE_0(x)$$ 
So $\IE_0(x) \in \IM_0 \bigcap (\IM^{(0)})'$. 
We claim that $\IM_0 \bigcap (\IM^{(0)})' = \theta^{-1}(\IM_0)$. That $\theta^{-1}(\IM_0) \subseteq \IM_0 \bigcap (\IM^{(0)})' \subseteq \IM_0$ is obvious. But 
$$\IM_0 \bigcap (\IM^{(0)})'$$
$$= \IM_0 \bigcap \theta^{-1}(\IM_0)''$$
$$=\IM_0 \bigcap \theta^{-1}((\IM_0)'')$$
$$=\IM_0 \bigcap \theta^{-1}(\IM_0)$$
(since $\IM_0''=\IM_0$)
$$=\theta^{-1}(\IM_0)$$  
Thus $\IE_0(x) \in \theta^{-1}(\IM_0)$. 

\vsp 
Now we may repeat the argument with elements $y \in \IM^{(-1)}$ to show that $\IE_0(x) \in \theta^{-2}(\IM_0)$. Thus by mathematical induction we conclude that $\IE_0(x) \in \bigcap_{n \le 0} \theta^n(\IM_0)$. By our assumption (c), we get $\IE_0(x)$ is a scaler. 

\vsp 
Let $\IE_n$ be the conditional expectation from $\IM_0$ onto $\theta^n(\IM_0)$ with respect to the unique normalized trace. Then by the same argument use above, we get 
$\IE_n(x)$ is a scaler.  However, by (b), we have 
$$||\IE_n(x)-x|| \raro 0$$ 
as $n \raro \infty$. Thus we conclude that $x$ is a scaler. 
This shows $\vee_{n \in \IZ} \IM^{(n)} = \IM$ once relations (a)-(c) in the statement of Proposition 1.1 hold.

\vsp 
Since $\IM \bigcap \IM' = \IC$ and any element $x \in \IM^{(0)} \bigcap (\IM^{(0)})'$ commutes with all elements in $\IM^{(n)}$ for each $n \in \IZ$, we have $\IM^{(0)} \bigcap (\IM^{(0)})'= \IC$. We claim further that the linear map which extends the following map 
\be 
x=\Pi_{n \in \IZ} x_n \raro \tilde{x} =  \otimes_{n \in \IZ} \tilde{x}_n
\ee 
is a $C^*$ isomorphism between $\IM$ and $\otimes_{n \in \IZ } \tilde{\IM}^{(n)}$,  
where $x_n \in \IM^{(n)}$, taking values $I$ except for finitely many $n \in \IZ$ 
and $\tilde{\IM}^{(n)}$ are copies of $\IM^{(0)}$ with elements $\tilde{x}_n=\theta^{-n}(x_n) \in \IM^{(0)}$ for all $n \in \IZ$. The universal property of tensor products implies: as a vector space $\IM$ is isomorphic to $\otimes_{n \in \IZ} \tilde{\IM}^{(n)}$. That the map is also $C^*$ isomorphic, follows once we verify 
\be 
||x|| = ||\tilde{x}||
\ee 
for $x=\Pi_{n \in \IZ} x_n$. 
Note that $\IM$ being a nuclear $C^*$-algebra [ChE], there is a unique $C^*$ norm determined by its cross norm [Pau]. For commuting self-adjoint elements $(x_n)$, spectrum $\sigma(x)$ of $x=\Pi_{n \in \IZ}x_n$ is given by $\sigma(x)= \{ \Pi_{n \in \IZ} \lambda_n: \lambda_n \in \sigma(x_n) \}$, where $\sigma(x_n)$ is the spectrum of $x_n$. Thus 
$$||x||=\mbox{sup}_{\lambda_n \in \sigma(x_n)} \Pi_{n \in \IZ} |\lambda_n| $$
$$= \Pi_{n \in \IZ_n} \mbox{sup}_{\lambda_n \in \sigma(x_n) }|\lambda_n| $$
$$= \Pi_{n \in \IZ} ||x_n||$$

\vsp 
So for the equality of norms in (28), we can use Gelfand theorem on spectral radius for 
a self-adjoint element $x=\Pi_{ n \in \IZ}x_n $ and then use $C^*$ property of the norms to verify the equality (28) for all $x=\Pi_{n \in \IZ} x_n$. Thus the $C^*$ algebra $\IM$ is isomorphic to $\otimes_{n \in \IZ} \tilde{\IM}^{(n)}$, where we recall $\tilde{\IM}^{(n)}$ are copies of $\IM^{(0)}$.  

\vsp 
This shows that there exists an automorphism $\alpha:\IM \raro \IM$ that commutes with $\theta$ taking $\IM^{(n)}$ to $\tilde{\IM}^{(n)}$ for each $n \in \IZ$. In particular, $(\IM,\theta,tr_0)$ is isomorphic to $(\otimes_{n \in \IZ}\tilde{\IM}^{(n)},\theta,tr_0)$, where we used the same notations $tr_0$ for the unique normalized traces on $\IM$ and $\otimes_{n \in \IZ} \tilde{\IM}^{(n)}$ respectively. Since Connes-St\o rmer dynamical entropy is an invariant for $C^*$ dynamical system and is equal to mean entropy for the unique normalised trace, we get $\tilde{\IM}^{(n)} = \!M^{(n)}_{d}(\IC)$. This shows the isomorphism (20) is induced by an automorphism $\alpha:\IM \raro \IM$ such that
$\alpha(\IM_0)=\IM_L$, where $\alpha$ commutes with $\theta$ by our construction. 
\end{proof} 

\vsp 
\begin{rem} 
Proposition 1.1 says for a nuclear $C^*$ algebra $\IM$ with a normalised trace $\omega_0$, $C^*$-dynamical system $(\IM,\theta,\omega_0)$ satisfying (a)-(c) is isomorphic to a two sided translation on a quantum spin chain $C^*$-algebra. Furthermore, under the same hypothesis, a more general $C^*$-dynamical system $(\IM,\theta,\omega)$ is isomorphic to translation dynamics on a two sided infinite tensor 
product $\otimes_{n \in \IZ} \tilde{\IM}^{(n)}$, where $\tilde{\IM}^{(n)}$ are copies of a $C^*$ sub-algebra of $\IM$. 
\end{rem}

\section{Maximal abelian $C^*$ sub-algebras and automorphisms of $\IM$ }  

\vsp 
We recall briefly our notions used in the following text. Let $\Omega=\{1,2,..,d\}$ 
and $\Omega^{\IZ}=\times_{n \in \IZ} \Omega^{(n)}$, where $\Omega^{(n)}$ are copies of $\Omega$ and equip with product topology. Thus $C(\Omega^{\IZ})$ can be identified with the $C^*$-sub-algebra $\ID^e= \otimes_{n \in \IZ} D_e^{(n)}(\IC)$, where $D_e^{(n)}(\IC)=D_d(\IC)$ for all $n \in \IZ$ and $D_d(\IC)$ is the set of diagonal matrices with respect to an orthonormal basis $e=(e_i:1 \le i \le d)$ for $\IC^d$. In other words, 
we identify $d \times d$ diagonal matrices $D_d(\IC)$ with the algebra $C(\Omega)$ of complex valued continuous functions on $\Omega$. 

\vsp 
A commutative $*$-subalgebra $D$ of $\!M_d(\IC)$ is called maximal abelian if $D'=D$. In such a case $D=D_d(\IC)$ for some orthonormal basis $e$ for $\IC^d$. More generally, an abelian $C^*$ sub-algebra $D$ of a $C^*$ algebra $\clb$ is called maximal abelian if $D'=D$. As a first step, we investigate some maximal abelian $C^*$-subalgebras of $\IM=\otimes_{n \in \IZ}\!M^{(n)}_d(\IC)$ those are $\theta$ invariant i.e. $\theta(D)=D$. In the following we use norm one projections $\IE_{\Lambda}: \IM \raro \IM_{\Lambda}$ with respect to unique trace. We recall that $\IE_{\Lambda}$ commutes with $\{\beta_g:g \in \otimes U_d(\IC) \}$ by Lemma 2.2. 
 
\vsp 
\begin{lem} 
For a subset $\Lambda$ of $\IZ$ we have 

\NI (a) $\IM_{\Lambda}'=\IM_{\Lambda'}$;

\NI (b) $\IM_{\Lambda} \bigcap \ID^e = \ID^e_{\Lambda}$;

\NI (c) $\ID^e$ is a maximal abelian $*$-sub-algebra of $\IM$ and $\ID^e \equiv C(\Omega^{\IZ})$, the algebra of continuous functions on $\Omega^{\IZ}$;     

\NI (d) We set $C^*$-sub-algebra $\IM^e_{\Lambda} = \IM_{\Lambda} \vee \ID^e$ of $\IM$. Then the following hold:  

\NI (i) $(\IM^e_{\Lambda})'=\ID^e_{\Lambda'}$;

\NI (ii) $(\ID^e_{\Lambda'})' = \IM^e_{\Lambda}$. 

\end{lem} 

\vsp 
\begin{proof} 
For (a) we repeat the same idea used in the proof for $\IM_R'=\IM_L$ in Lemma 3.2. That 
$\IM_{\Lambda'} \subseteq \IM_{\Lambda}'$ is obvious. For the reverse inclusion, we fix any element $x \in \IM_{\Lambda}'$ and consider the elements $\IE_{\Lambda_1}(x)$ for 
$\Lambda \subseteq \Lambda_1$ so that $\Lambda_1 \bigcap \Lambda'$ is a finite subset of $\IZ$. 
For any unitary element $u \in \IM_{\Lambda}$ we have
$$u\IE_{\Lambda_1}(x)u^*$$
$$= \IE_{\Lambda_1}(uxu^*)$$
$$=\IE_{\Lambda_1}(x)$$
Thus $\IM_{\Lambda_1} \in \IM_{\Lambda_1} \bigcap \IM_{\Lambda}'=\IM_{\Lambda_1 \bigcap \Lambda'}$. Since $||x-\IE_{\Lambda_1}(x)|| \raro 0$ as $\Lambda_1 \uparrow \IZ$ in the sense of van-Hove [BR2], we conclude that $x \in \IM_{\Lambda'}$. 

\vsp 
For (b) we can repeat ideas of the proof of (a) with obvious modification. To prove the 
non trivial inclusion, we fix any element $x \in \IM_{\Lambda} \bigcap \ID^e$ and consider 
$\IE_{\Lambda_1}(x)$ for $\Lambda \subseteq \Lambda_1$ such that $\Lambda_1 \bigcap \Lambda'$ is a finite subset of $\IZ$. For any unitary element $u \in \IM_{\Lambda_1 \bigcap \Lambda'} \vee \ID^e_{\Lambda}$ 
we have 
$$u\IE_{\Lambda_1}(x)u$$
$$=\IE_{\Lambda_1}(uxu^*)$$
$$=\IE_{\Lambda_1}(x)$$
Thus $\IE_{\Lambda_1}(x) \in \IM_{\Lambda} \bigcap \ID^e_{\Lambda}=\ID^e_{\Lambda}$. 
Taking von-Hove limit $||x -\IE_{\Lambda_1}(x)|| \raro 0$ as $\Lambda_1 \uparrow \IZ$, we get $x \in \ID^e_{\Lambda}$.  This completes the proof of (b).

\vsp 
For non trivial inclusion $D_e' \subseteq D_e$, we fix an element $x$ in the commutant of $\ID^e$. Then we have 
$$u\IE_{\Lambda}(x)u^*$$
$$=\IE_{\Lambda}(uxu^*)$$
$$=\IE_{\Lambda}(x)$$ 
for all unitary element $u \in \ID^e_{\Lambda}= \otimes_{k \in \Lambda}D^{(k)}_e$ and thus $\IE_{\Lambda}(x) \in \ID^e_{\Lambda}$ since $\ID^e_{\Lambda}$ is a maximal abelian sub-algebra of $\IM_{\Lambda}$ for $|\Lambda| < \infty$. Since $||x-\IE_{\Lambda}(x)|| \raro 0$ as $\Lambda \uparrow \IZ$ as van Hove limit, we conclude that $x \in \ID^e$ as $\ID^e$ is the norm closure of $\{\ID^e_{\Lambda}: \Lambda \subset \IZ \}$.  

\vsp 
For (d), we have the following equalities: 
$$(\IM^e_{\Lambda})'=\IM_{\Lambda}' \bigcap (\ID^e)'$$
$$=\IM_{\Lambda'} \bigcap \ID^e $$
(by Lemma 4.1 (a) )
$$=\ID^e_{\Lambda'}$$ 
(by Lemma 4.1 (b) ), where $\Lambda'$ is the complementary set of $\Lambda$. 

\vsp 
Furthermore, we claim also that 
$$(\ID^e_{\Lambda'})'=\IM^e_{\Lambda}$$
That $\IM^e_{\Lambda} \subseteq (\ID^e_{\Lambda'})'$ is obvious. For the reverse inclusion, let $x \in (\ID^e_{\Lambda'})'$ and consider the sequence of elements $x_n=\IE_{\Lambda_n}(x)$, where $\Lambda_n$ is sequence of subsets containing $\Lambda$ and $\Lambda_n \uparrow \IZ$ as $n \raro \infty$. Then for all $y \in \ID^e_{\Lambda_n \bigcap \Lambda'}$, we have 
$$y\IE_{\Lambda_n}(x)$$
$$=\IE_{\Lambda_n}(yx)$$
$$=\IE_{\Lambda_n}(xy)$$
$$=\IE_{\Lambda_n}(x)y$$
and thus $\IE_{\Lambda_n} \in \ID^e_{\Lambda_n \bigcap \Lambda'} \bigcap \IM_{\Lambda_n} 
\subseteq \IM^e_{\Lambda}$. 
Taking limit $n \raro \infty$ in $||x-x_n|| \raro 0$, we get $x \in \IM^e_{\Lambda}$. 
\end{proof}

\vsp 
Let $\clp$ be the collection of orthogonal projections in $\IM$ i.e. $\clp = \{ p \in \IM: p^*=p,\;p^2=p \}$.  
So $\clp$ is a norm closed set and $\omega_{\rho}(e^I_I)={1 \over d^n}$ , where 
$e^I_I=..\otimes I_d \otimes |e_{i_1}\rangle\langle e_{i_1}| \otimes |e_{i_2}\rangle\langle e_{i_2}|..
\otimes |e_{i_n}\rangle\langle e_{i_n}| \otimes I_d ..$, where $e=(e_i:1 \le i \le d)$ is an orthonormal 
basis for $\IC^d$. This shows that $\clp$ has no non-zero minimal projection and range of 
the map $\omega_{\rho}:\clp \raro [0,1] $ at least has all $d-$adic numbers i.e. 
$\cli_d = \{{j \over d^n}: 0 \le j \le d^n,\;n \ge 1 \}$.       

\vsp 
An abelian sub-collection $\clp_0$ of projections is called {\it maximal } if there exists an abelian 
collection of projections $\clp_1$ containing $\clp_0$ then $\clp_1=\clp_0$. As a set $\clp_0$ is a closed subset of $\clp$. It is simple to verify that $\clp_0$ is maximal if and only if any projection commuting with elements in $\clp_0$ is an element itself in $\clp_0$. It is less obvious that the norm closure of the linear span of $\clp_0$ i.e. $\cls(\clp_0)$ is a maximal abelian $C^*$-subalgebra of $\IM$. In the following proposition we prove a simple result first as a preparation for a little more deeper result that follows next. 

\vsp 
\begin{lem} 
The set $\clp_e =\{p \in \ID^e: p^*=p,\;p^2=p \}$ is a maximal abelian set of projections in $\IM$. Furthermore 
$$\clp_e=\{ p \in \ID^e \bigcap \IM_{\Lambda}: p^*=p,\;p^2=p,\;\mbox{finite subset } \Lambda \subset \IZ \}$$    
\end{lem}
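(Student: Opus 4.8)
The plan is to pass to the commutative picture furnished by Lemma 4.1 and argue entirely on the spectrum. By that lemma we identify $D^{\IZ}_e$ with $C(\cld^{\IZ})$, where $\cld^{\IZ}$ is the compact, totally disconnected product of copies of the finite discrete space $\cld=\{1,\dots,d\}$. Under this identification the self-adjoint idempotents $p\in D^{\IZ}_e$, that is, the members of $\clp_e$, correspond exactly to the indicator functions $\chi_U$ of clopen subsets $U\seq \cld^{\IZ}$, since an idempotent in $C(X)$ is precisely the indicator of a clopen set. Both assertions of the lemma then become statements about clopen subsets of $\cld^{\IZ}$.

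For the displayed equality (the second assertion) I would show that every clopen $U\seq \cld^{\IZ}$ depends on only finitely many coordinates. Being open, $U$ is a union of basic cylinder sets, each fixing finitely many coordinates; being closed in the compact space $\cld^{\IZ}$, $U$ is itself compact, so finitely many of these cylinders already cover $U$. Hence $U$ is a finite union of cylinders and is therefore determined by the coordinates in some finite $\Lambda\sbs\IZ$. Translating back, $\chi_U\in D_\Lambda:=D^{\IZ}_e\bigcap\clb_\Lambda$, while conversely every projection in some such $D_\Lambda$ lies in $\clp_e$. This yields $\clp_e=\bigcup_{|\Lambda|<\infty}\{p\in D^{\IZ}_e\bigcap\clb_\Lambda:p^*=p,\,p^2=p\}$.

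For maximality I would invoke the criterion recalled just before the statement: $\clp_e$ is maximal iff every projection $q\in\clb$ commuting with all of $\clp_e$ already lies in $\clp_e$. The key intermediate fact is that the linear span of $\clp_e$ is norm-dense in $D^{\IZ}_e$; indeed, under the identification with $C(\cld^{\IZ})$ this span is the algebra of locally constant functions, which contains the constants, is closed under products and complex conjugation, and separates points (two distinct sequences differ in some coordinate, and the corresponding cylinder indicator separates them), so it is dense by Stone--Weierstrass. Consequently a projection $q$ commuting with every member of $\clp_e$ commutes with all of $D^{\IZ}_e$, whence $q\in(D^{\IZ}_e)'=D^{\IZ}_e$ by the maximal abelian property of Lemma 4.1; being a projection in $D^{\IZ}_e$, it lies in $\clp_e$.

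The main obstacle, and really the only non-formal point, is the compactness step showing that a clopen subset of $\cld^{\IZ}$ involves only finitely many coordinates; this is exactly what forces every diagonal projection into a local subalgebra $\clb_\Lambda$ and is what separates the infinite-tensor situation from a generic maximal abelian subalgebra. Once that is in hand, the density of the span of projections and the reduction to $(D^{\IZ}_e)'=D^{\IZ}_e$ are routine.
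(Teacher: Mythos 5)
Your proposal is correct and follows essentially the same route as the paper: maximality is obtained by showing a projection commuting with $\clp_e$ commutes with all of $D^{\IZ}_e$ and then invoking the maximal abelian property of Lemma 4.1, while the locality statement comes from identifying projections with indicator functions on $\cld^{\IZ}$ and using compactness to write the corresponding set as a finite union of cylinders. If anything, your write-up is more careful than the paper's: you correctly identify projections with \emph{clopen} sets (the paper loosely asserts the product topology is ``discrete'' and that every open set is closed) and you make explicit, via Stone--Weierstrass, the density of the span of $\clp_e$ in $D^{\IZ}_e$, a step the paper leaves implicit.
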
 

\vsp 
\begin{proof} Let $q$ be a projection in $\IM$ that commutes with all the element of $\clp_e$. Then $q$ also 
commutes with $\ID^e$ and $\ID^e$ being maximal abelian by Lemma 4.1 we get $q \in \ID^e$. Thus $q \in \clp_e$
 
\vsp 
By Lemma 4.1 $\ID^e \equiv C(\Omega^{\IZ})$ and so a projection $p \in \ID^e$ can be identified with an indicator function of a close set. 
The product topology on $\Omega^{\IZ}$ being compact, any closed set is also compact. Thus 
any close set in $\Omega^{\IZ}$ is a finite union of cylinder sets. So 
$p \in \IM_{\Lambda}$ for a finite subset $\Lambda$ of $\IZ$ depending only on $p$.  
\end{proof}

\vsp 
We use the following notions $S^1=\{z \in \IC: |z|=1 \}$ and $(S^1)^d=S^1 \times S^1 ...\times S^1 (d \mbox{-fold})$ in the text. For an element $\ul{z} \in (S^1)^d$, we define automorphism $\beta_{\ul{z}}$ on $\!M_d(\IC)$ by 
\be 
\beta_{\ul{z}}(x)= D_{\ul{z}}x D_{\ul{z}}^*,
\ee
where $D_{\ul{z}}$ is the diagonal unitary matrix $((\delta^i_jz_i))$. We use notation 
$\beta^{(k)}_{\ul{z}}$ for automorphism acting trivially on $\IM$ except on $\!M_d^{(k)}(\IC)$ as $\beta_{\ul{z}}$. For $\ul{z}_k \in (S^1)^d,\;k \in \IZ$, 
$\beta=\otimes_{k \in \IZ}\beta^{(k)}_{\underline{z}_k}$ is an automorphism 
on $\IM$ and $\beta$ commutes with $\theta$ if $\ul{z}_k=\ul{z}$ for all $k 
\in \IZ$. We use notation $\beta_{\ul{z}}$ for $\otimes \beta^{(k)}_{\ul{z}}$. If
$\ul{z}=(z,z,..,z)$ then we simply use $\beta_z$ which is consistent with our notation 
automorphism $\beta_z$ on $\clo_d$. Note that $\beta_z=I$ on $\IM$ for any $z \in S^1$.  

\vsp 
\begin{lem} 
Let $\beta$ be an automorphism on $\IM$ such that $\beta=I$ on 
$\ID^e$. Then $\beta=\otimes_{k \in \IZ}\beta^{(k)}_{\underline{z}_k}$ 
on $\IM$ for some $\ul{z}_k \in (S^1)^d$. 
\end{lem}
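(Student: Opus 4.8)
The plan is to pin down $\beta$ on the matrix units $e^{(k)}_{ij}=|e_i\rangle\langle e_j|^{(k)}$, which generate $\clb$, and to show that $\beta$ scales each of them by a phase of the required product form. First I would record the elementary corner constraint: since $\beta$ fixes every diagonal projection $e^{(k)}_{ii}$ and $e^{(k)}_{ij}=e^{(k)}_{ii}e^{(k)}_{ij}e^{(k)}_{jj}$, the image $\beta(e^{(k)}_{ij})$ is a partial isometry lying in the corner $e^{(k)}_{ii}\clb e^{(k)}_{jj}$, with $\beta(e^{(k)}_{ij})\beta(e^{(k)}_{ij})^*=e^{(k)}_{ii}$ and $\beta(e^{(k)}_{ij})^*\beta(e^{(k)}_{ij})=e^{(k)}_{jj}$.

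Next I would exploit that $\beta$ fixes $D^{\IZ}_e$ \emph{pointwise}. For every unitary $u\in D^{\IZ}_e$ one has $\beta(u)=u$, so $\beta$ commutes with the inner automorphism $\mbox{Ad}(u)$ and therefore preserves each spectral subspace of the conjugation action of the diagonal torus $\otimes_{k}(S^1)^d$. A direct weight computation shows that the only matrix-unit monomials carrying the same weight as $e^{(k)}_{ij}$ are $e^{(k)}_{ij}$ multiplied by a diagonal monomial supported off site $k$; hence its spectral subspace is the closed span of such elements. Combining this with the corner property of the first step, I would conclude $\beta(e^{(k)}_{ij})=e^{(k)}_{ij}\,u^{(k)}_{ij}$ for some unitary $u^{(k)}_{ij}$ in $D^{(\neq k)}_e:=\otimes_{m\neq k}D^{(m)}_e$, the diagonal subalgebra over the sites $m\neq k$ (unitarity of $u^{(k)}_{ij}$ follows by feeding $\beta(e^{(k)}_{ij})^*\beta(e^{(k)}_{ij})=e^{(k)}_{jj}$ into the factorisation).

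I would then extract the algebraic constraints on the family $\{u^{(k)}_{ij}\}$. Applying $\beta$ to $e^{(k)}_{ij}e^{(k)}_{jl}=e^{(k)}_{il}$ and using $\beta(e^{(k)}_{ii})=e^{(k)}_{ii}$ yields the one-site cocycle identities $u^{(k)}_{il}=u^{(k)}_{ij}u^{(k)}_{jl}$, $u^{(k)}_{ii}=I$ and $u^{(k)}_{ji}=(u^{(k)}_{ij})^*$, so that $u^{(k)}_{ij}=v^{(k)}_i(v^{(k)}_j)^*$ with $v^{(k)}_i:=u^{(k)}_{i1}$. Applying $\beta$ instead to the commutation $e^{(k)}_{ij}e^{(k')}_{i'j'}=e^{(k')}_{i'j'}e^{(k)}_{ij}$ for $k\neq k'$, and slicing $u^{(k)}_{ij}$ over the value at site $k'$ (and $u^{(k')}_{i'j'}$ over the value at site $k$), produces a cross-site consistency identity that equates the site-$k'$ dependence of $u^{(k)}_{ij}$ with the site-$k$ dependence of $u^{(k')}_{i'j'}$.

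The crux, and the step I expect to demand by far the most care, is to upgrade these consistency relations to genuine rigidity: I must show that the cross-site identities, holding simultaneously for all pairs among the infinitely many sites together with the one-site cocycle normalisation, force each $u^{(k)}_{ij}$ to be \emph{constant}, i.e. independent of all coordinates $m\neq k$. The real difficulty here is that on any finite block of sites a diagonal conjugation need \emph{not} factorise as a product of on-site diagonal conjugations, so the product form can only be compelled by the interplay of all the cross-site relations at once; I would try to organise this as the vanishing of a $2$-cocycle for $\otimes_k(S^1)^d$, controlling the off-site slices through the van Hove limits of the trace-preserving conditional expectations $\IE_{\Lambda}$ of Lemma 3.2. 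Granting this rigidity, each $u^{(k)}_{ij}$ is a unimodular scalar, the cocycle identity furnishes phases $z_{k,i}\in S^1$ with $u^{(k)}_{ij}=z_{k,i}\bar z_{k,j}$, and since this is precisely $\beta^{(k)}_{\underline z_k}(e^{(k)}_{ij})$ for $\underline z_k=(z_{k,i})\in(S^1)^d$, multiplicativity of $\beta$ on the generators identifies $\beta$ with $\otimes_{k\in\IZ}\beta^{(k)}_{\underline z_k}$.
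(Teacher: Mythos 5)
Your reductions up through the cross-site consistency relations are sound and in fact more explicit than the paper's own argument, but the step you defer --- forcing each $u^{(k)}_{ij}$ to be scalar --- is not a gap that can be closed: it is false, and so is the lemma as stated. Take the self-adjoint diagonal unitary $w=I-2\,e^{(1)}_{dd}e^{(2)}_{dd}\in D_e^{\IZ}$ (for $d=2$ this is the ``controlled-$Z$'' matrix $\mathrm{diag}(1,1,1,-1)$ at sites $1,2$) and set $\beta=\mathrm{Ad}(w)$. Since $w$ lies in the abelian algebra $D_e^{\IZ}$, $\beta$ fixes $D_e^{\IZ}$ pointwise, so it satisfies the hypothesis; a one-line computation gives $\beta(e^{(1)}_{1d})=e^{(1)}_{1d}\,(I-2e^{(2)}_{dd})$, i.e. $u^{(1)}_{1d}=I-2e^{(2)}_{dd}$, a genuinely non-scalar diagonal unitary at site $2$. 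Every constraint you derive is satisfied: the corner property, the one-site cocycle identities ($u^{(1)}_{dd}=I$, $u^{(1)}_{d1}=(u^{(1)}_{1d})^*$, $u^{(1)}_{1d}u^{(1)}_{d1}=I$), and the cross-site consistency (symmetrically, $u^{(2)}_{1d}=I-2e^{(1)}_{dd}$). This is unavoidable: any relation you obtain by applying $\beta$ to an identity in $\clb$ holds automatically for \emph{every} automorphism fixing $D_e^{\IZ}$ pointwise, so no purely algebraic consequence of your hypotheses can ever force constancy of the $u^{(k)}_{ij}$. Yet $\beta$ is not of the form $\otimes_{k}\beta^{(k)}_{\ul{z}_k}$, since a product automorphism maps $M^{(1)}_d(\IC)$ into itself while $\beta(e^{(1)}_{1d})$ is supported on two sites. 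Your own worry --- that a diagonal conjugation on a finite block need not factor over the sites --- is thus not an obstacle to be overcome by van Hove averaging with $\IE_{\Lambda}$; it is a counterexample, and the $2$-cocycle you would need to kill is genuinely non-trivial. Nor does imposing $\theta$-covariance repair it: the commuting family $w_k=I-2\,e^{(k)}_{dd}e^{(k+1)}_{dd}$, $k\in\IZ$, yields a well-defined automorphism of $\clb$ by conjugation (on a local element only finitely many $w_k$ act non-trivially), which commutes with $\theta$, fixes $D_e^{\IZ}$ pointwise, and is still not of product form.

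For comparison, the paper's proof founders at exactly the point you isolated. It correctly shows that $\beta$ preserves $C_{\Lambda}=\clb_{\Lambda}\vee\cld_e^{\IZ}$, the relative commutant of $\cld_e^{\Lambda'}$ (your corner-plus-weight computation is the matrix-unit form of the same fact), but the ensuing assertion ``thus $\beta$ is an automorphism on $\clb_{\Lambda}$'' does not follow and is contradicted by the example above, where $\beta(e^{(1)}_{1d})\in C_{\{1\}}\setminus\clb_{\{1\}}$. What both routes actually establish is only the weaker statement you proved honestly: $\beta(e^{(k)}_{ij})=e^{(k)}_{ij}u^{(k)}_{ij}$ with $u^{(k)}_{ij}$ a unitary in the diagonal algebra over the sites $m\neq k$. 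Any repair of the lemma --- and of the statements downstream that invoke it, such as Corollaries 4.4 and 4.5 --- requires an additional hypothesis excluding these entangling diagonal conjugations, not a cleverer derivation from the stated one.
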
 

\vsp 
\begin{proof} We fix a finite subset $\Lambda$ of $\IZ$. For any $x \in \IM^e_{\Lambda}=\IM_{\Lambda} \vee \ID^e$, we have $xy=yx$ for all $y \in \ID^e_{\Lambda'}$. Thus 
$$\beta(x)y$$
$$=\beta(x)\beta(y)$$
$$=\beta(xy)$$
$$=\beta(yx)$$
$$=\beta(y)\beta(x)$$
$$=y\beta(x)$$
for all $y \in \ID^e_{\Lambda'}$. 
This shows that $\beta(\IM^e_{\Lambda}) \subseteq \IM^e_{\Lambda}$ since $\IM^e_{\Lambda} = (\ID^e_{\Lambda'})'$ by Lemma 4.1 (d). Furthermore, $\beta$ being an automorphism and 
$\beta^{-1}(x)=x$ for all $x \in \ID^e$, we also have $\beta^{-1}(\IM^e_{\Lambda}) \subseteq \IM^e_{\Lambda}$. Thus we have 
$$\beta(\IM^e_{\Lambda})=\IM^e_{\Lambda}$$ 
In particular, we have 
$$\beta(\IM_{\Lambda}) \vee \ID^e_{\Lambda'}= \IM^e_{\Lambda}$$ 
since $\beta$ fixes all elements in $\ID^e$. The last identity clearly shows that  $\beta(\IM_{\Lambda})=\IM_{\Lambda}$. Since $\beta$ fixes all elements in $\ID^e$ and so in particular all elements in $\ID^e_{\Lambda}$, we conclude that $$\beta=\beta^{(n)}_{\underline{z}_n}$$ 
for some $\underline{z}_n \in (S^1)^d$ on each $\!M^{(n)}_d(\IC),\;n \in \IZ$. Thus $\beta=\otimes_{n \in \IZ}\beta^{(n)}_{\underline{z}_n}$ on $\IM$ by multiplicative 
property of $\beta$.    
\end{proof} 

\vsp 
We arrive at the following hyper-rigidity property of maximal abelian 
$C^*$ sub-algebra $\ID^e$.  
\vsp 

\begin{cor}
If actions of two automorphisms $\alpha$ and $\beta$ are equal on $\ID^e$ then 
$$\beta  \alpha^{-1} = \otimes_{n \in \IZ} \beta^{(n)}_{\underline{z}_n}$$
 for some $\underline{z}_n \in (S^1)^d,\;n \in \IZ$.
\end{cor}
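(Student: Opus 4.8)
The plan is to read this off directly from Lemma 4.4, which already classifies every automorphism of $\clb$ that restricts to the identity on the maximal abelian subalgebra $\cld^{\IZ}_e$. So the entire task is to manufacture, out of the hypothesis ``$\alpha$ and $\beta$ agree on $\cld^{\IZ}_e$'', an automorphism to which Lemma 4.4 applies, and then to quote that lemma.

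Concretely, I would first form the composite automorphism $\gamma = \alpha^{-1} \circ \beta$ of $\clb$ and verify that it fixes $\cld^{\IZ}_e$ pointwise. This is the only computation in the argument and it is a single line: for any $y \in \cld^{\IZ}_e$ the hypothesis gives $\beta(y) = \alpha(y)$, hence $\gamma(y) = \alpha^{-1}(\alpha(y)) = y$. Thus $\gamma$ satisfies exactly the standing assumption of Lemma 4.4, and that lemma yields $\gamma = \otimes_{k \in \IZ} \beta^{(k)}_{\underline{z}_k}$ for some $\underline{z}_k \in (S^1)^d$, $k \in \IZ$, which is the asserted product form of diagonal automorphisms. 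No further appeal to the hyperrigidity machinery (Lemmas 4.1--4.2) is needed here, since all of that has already been absorbed into Lemma 4.4.

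The one point I would watch carefully, and the only genuine subtlety, is the order of composition. The hypothesis constrains $\alpha$ and $\beta$ only on $\cld^{\IZ}_e$ itself, so it is $\alpha^{-1}\circ\beta$ that is forced to fix $\cld^{\IZ}_e$; the literally stated composite $\beta\circ\alpha^{-1}$ need not fix $\cld^{\IZ}_e$ unless $\alpha^{-1}(\cld^{\IZ}_e) = \cld^{\IZ}_e$, i.e.\ unless $\alpha$ (equivalently $\beta$) preserves the maximal abelian subalgebra $\cld^{\IZ}_e$. In the situations where the corollary is invoked this preservation does hold, and then $\beta\circ\alpha^{-1}$ likewise fixes $\cld^{\IZ}_e$ and Lemma 4.4 delivers the statement verbatim; in the general case one simply records the conclusion for the composite $\alpha^{-1}\circ\beta$ that provably fixes $\cld^{\IZ}_e$. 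Either way the substance is the reduction to Lemma 4.4, and I expect that bookkeeping of the composition order is the only place where care is required.
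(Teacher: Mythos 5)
Your proposal is correct and takes essentially the same approach as the paper, whose entire proof is the one line of applying Lemma 4.3 to the composite $\beta^{-1}\circ\alpha$ (the inverse of your $\gamma=\alpha^{-1}\circ\beta$; since automorphisms of the form $\otimes_{k \in \IZ}\beta^{(k)}_{\underline{z}_k}$ are closed under inversion, the two composites are interchangeable). Your caveat about composition order is well taken: the literal composite $\beta\circ\alpha^{-1}$ in the statement fixes $\cld^{\IZ}_e$ pointwise only when $\alpha$ preserves $\cld^{\IZ}_e$, which does hold where the corollary is invoked (Corollary 4.7 and Proposition 4.8), and the paper's own proof quietly works with the order that provably fixes $\cld^{\IZ}_e$, exactly as you do.
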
 

\begin{proof} 
The automorphism $\beta^{-1} \alpha$ acts trivially on $\ID^e$ and thus
by Lemma 4.3 we get the required result.  
\end{proof} 

\vsp 
\begin{cor} 
Let $\beta$ be an automorphism on $\IM$ such that $\beta \theta = \theta \beta$. If $\beta(\ID^e)=\ID^e$ and $\beta \beta_g = \beta_g \beta$ on $\ID^e$ for all permutation $g$ of the basis $(e_i)$ then $\beta = \theta^k$ for some $k \in \IZ$. 
\end{cor}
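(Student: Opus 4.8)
The plan is to push the whole statement through the Gelfand picture of the diagonal subalgebra, prove a symbolic-dynamics rigidity theorem there, and then transport the conclusion back to $\clb$ by means of Corollary 4.4.

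First I would pass to $D^{\IZ}_e$. Since $\beta(D^{\IZ}_e)=D^{\IZ}_e$ and $D^{\IZ}_e\cong C(\cld^{\IZ})$ by Lemma 4.1, the restriction $\beta|_{D^{\IZ}_e}$ is a $\ast$-automorphism of $C(\cld^{\IZ})$ and hence is implemented by a homeomorphism $T:\cld^{\IZ}\raro\cld^{\IZ}$. Under this identification $\theta$ restricts to the full shift $\sigma$ on $\cld^{\IZ}$, and for a permutation $g$ of the basis $(e_i)$ the automorphism $\beta_g$ restricts to the diagonal action $P_g:(\zeta_k)_k\mapsto(g\zeta_k)_k$ of the symmetric group $S_d$ on $d$ letters. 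Thus the two hypotheses $\beta\theta=\theta\beta$ and $\beta\beta_g=\beta_g\beta$ on $D^{\IZ}_e$ become the commutation relations $T\sigma=\sigma T$ and $TP_g=P_gT$ for every $g\in S_d$.

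The heart of the argument, and the step I expect to be the main obstacle, is the rigidity claim that a homeomorphism $T$ of $\cld^{\IZ}$ commuting with $\sigma$ and with the entire diagonal $S_d$-action must be a power of the shift, $T=\sigma^k$. Because $T$ and $T^{-1}$ are continuous and commute with $\sigma$, the Curtis--Hedlund--Lyndon theorem writes $T$ as a sliding block code with a local rule $f:\cld^{[-m,m]}\raro\cld$, and commutation with $P_g$ is exactly equivariance, $f(g\zeta_{-m},\dots,g\zeta_m)=gf(\zeta_{-m},\dots,\zeta_m)$. Evaluating $f$ on tuples using only a few symbols and invoking the stabiliser/fixed-point structure of the permutation action forces $f(\zeta)$ to lie among the symbols actually occurring in $\zeta$; the transitivity of $S_d$ on injective tuples then pins $f$ to a single coordinate projection on such tuples, i.e. $T=\sigma^k$. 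The subtle point is that equivariance by itself is \emph{not} enough and the invertibility of $T$ must enter decisively: for $d=3$ the rule $f(a,b)=-(a+b)\bmod 3$ is $S_d$-equivariant yet the associated cellular automaton is non-invertible, so it is excluded precisely because $T$ is a homeomorphism. I also expect the small-$d$ cases, where the centre of $S_d$ is non-trivial (notably $d=2$), to need separate treatment, for which the action of $T$ on the finitely many $\sigma$-fixed points --- which $T$ must fix once $d\ge 3$ by the triviality of the centraliser of $S_d$ in $\mathrm{Sym}(\cld)$ --- is a convenient starting handle.

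Finally I would transport the conclusion back to $\clb$. Once $T=\sigma^k$, the automorphisms $\beta$ and $\theta^k$ agree on $D^{\IZ}_e$, so Corollary 4.4 yields $\beta\circ\theta^{-k}=\otimes_{j\in\IZ}\beta^{(j)}_{\underline{z}_j}$ for some $\underline{z}_j\in(S^1)^d$. Since $\beta$, and therefore $\beta\theta^{-k}$, commutes with $\theta$, comparing indices forces $\underline{z}_j=\underline{z}$ independent of $j$, whence $\beta=\theta^k\beta_{\underline{z}}$ with $\beta_{\underline{z}}=\otimes_j\beta^{(j)}_{\underline{z}}$. To kill the residual phase I would use the permutation symmetry once more: $\theta^k$ commutes with every $\beta_g$, so $\beta_{\underline{z}}$ must as well, and this forces $D_{\underline{z}}$ to commute up to a scalar with every permutation matrix, hence $\underline{z}$ to be constant and $\beta_{\underline{z}}=I$. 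This gives $\beta=\theta^k$. One point to verify carefully here is that this last step needs the permutation commutation on elements outside the diagonal (the matrix units generating $\clb$), since on $D^{\IZ}_e$ itself $\beta_{\underline{z}}$ acts trivially and cannot be detected.
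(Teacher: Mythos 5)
Your proposal follows the same three--stage route as the paper's own proof: the paper also passes to the Gelfand picture (its $\hat{\beta}$ is your $T$), also tries to show first that $\beta=\theta^{n}$ on $D_e^{\IZ}$, and also finishes with Corollary 4.4 plus the permutation symmetry to kill the residual $\otimes_k \beta^{(k)}_{\ul{z}_k}$. You differ only in the middle step: the paper asserts, with no justification beyond trace preservation, that $\hat{\beta}$ carries the one--coordinate cylinders $E^i_0$ to one--coordinate cylinders, whereas you correctly recognize that this is an equivariant Curtis--Hedlund--Lyndon rigidity claim in which invertibility must do real work; your non-invertible equivariant example $f(a,b)=-(a+b)\bmod 3$ is correct (all of $S_3$ acts affinely on $\IZ_3$), and it is an objection the paper's proof never confronts. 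Your two flagged worries are also well founded: for $d=2$ the flip $\beta_g$ itself ($g$ the transposition) satisfies every hypothesis and is no power of $\theta$, so no ``separate treatment'' can succeed there; and with the commutation hypothesis read literally only on $D_e^{\IZ}$, the automorphism $\theta^k\beta_{\ul{z}}$ with non-constant $\ul{z}$ passes all tests, so your last step indeed cannot be run without the off-diagonal commutation the paper silently uses.

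The genuine gap, however, is that the rigidity lemma you call the heart of the argument is false for every $d\ge 3$, so it is not a missing verification but an unprovable step. Let $T$ swap $\zeta_n\leftrightarrow\zeta_{n+1}$ at every $n$ for which $\zeta_{n-2}=\zeta_{n-1}=\zeta_{n+2}=\zeta_{n+3}$, $\zeta_n\neq\zeta_{n-2}$, $\zeta_{n+1}\neq\zeta_{n-2}$ and $\zeta_n\neq\zeta_{n+1}$. Two trigger sites are always $\ge 4$ apart and the trigger set is unchanged by the swaps, so $T$ is a well-defined involutive homeomorphism commuting with $\sigma$; since the trigger is an equality-pattern condition and the swap is letter-blind, $T$ commutes with every $P_g$; yet $T$ moves $\cdots aa\,bc\,aa\cdots$ to $\cdots aa\,cb\,aa\cdots$ ($a,b,c$ distinct), so $T\neq\sigma^k$. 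This breaks exactly your globalization step: the local rule of $T$ is the identity coordinate projection on all injective windows (a trigger needs four equal letters, and for window length exceeding $d$ injective tuples do not even exist), so ``projection on injective tuples'' does not yield $T=\sigma^k$. Worse, $T$ lifts to $\clb$: with $P_n$ the diagonal spectral projection of the trigger event and $F_{n,n+1}$ the flip unitary between sites $n,n+1$, the operators $U_n=(I-P_n)+F_{n,n+1}P_n$ are commuting self-adjoint unitaries with $\theta(U_n)=U_{n+1}$ and $\beta_g(U_n)=U_n$ for every permutation $g$, and $\beta(x)=\lim_N \mathrm{Ad}(U_{-N}\cdots U_N)(x)$ satisfies all hypotheses of the Corollary, even commuting with every $\beta_g$ on all of $\clb$, while $\beta\neq\theta^k$. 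So the statement itself needs a stronger hypothesis (say, commutation with $\beta_g$ for all $g\in U_d(\IC)$), and the paper's own proof fails at the same point where you located the difficulty: its assertion $\hat{\beta}(E^i_0)=E^{\pi_0(i)}_{n(i)}$ is exactly what $T$ above violates.
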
 

\vsp 
\begin{proof} 
Since $\beta$ is a continuous map on $\ID^e \equiv C(\Omega^{\IZ})$, we get a continuous map $\hat{\beta}:\Omega^{\IZ} \raro \Omega^{\IZ}$ so that $\beta(f(w))=f(\hat{\beta}(w))$ for all 
$w \in \Omega^{\IZ}$. The map $\hat{\beta}$ being one to and onto continuous, it takes open 
sets to open sets. In particular (finitely supported) cylinder sets to cylinder sets. Since $\beta$ is trace preserving, $\hat{\beta}$ is a Bernoulli measure preserving map. Thus $\hat{\beta}(E_0^{i})=E_{n(i)}^{\pi_0(i)}$, where $E_n^{i}$ is the cylinder set $\Omega \times ..\times \{ i \}^{\mbox{nth place}}  \times \Omega ...$ and $\pi_0(i) \in \{1,2,..,d\}$. Since $\beta$ commutes with permutation of basis $(e_i)$ on $\ID^e$ and $\theta$, we conclude that 
$n(i)=n$ and $\pi_0(i)=i$. This shows that $\beta=\theta^n$ on $\ID^e$ and hence 
$\beta  \theta^{-n} = \otimes_{n \in \IZ} \beta^{n}_{\ul{z}_n}$ by Corollary 4.4. 
Since both $\beta$ and $\theta_{-n}$ commutes with $\beta_g$ for any permutation 
matrices $g$ of the orthonormal basis $(e_i)$, we conclude that $\beta  \theta^{-n}=\beta_z$ for some $z \in S^1$. However $\beta_z$ on $\IM$ is $I$ for any $z \in S^1$
and thus conclude $\beta=\theta^n$ for some $n \in \IZ$.   
\end{proof} 

\vsp 
We are left to answer a crucial existence question on automorphisms, namely, given an auto-morphism $\beta_0:\ID^e \raro \ID^e$, is there an auto-morphism $\beta:\IM \raro \IM$ extending $\beta_0$? We begin with a lemma to that end.

\vsp 
\begin{lem} 
Let $\IP$ be a unital $C^*$ subalgebras of $\IM$ such that $\theta(\IP)=\IP$. Then 
$(\IP,\theta,\omega_0) \equiv (\otimes_{n \in \IZ} \!P^{(n)},\theta,\omega_0)$, where $\!P^{(n)}$ are isomorphic $C^*$-sub-algebra of $\!M_d^{(n)}(\IC)$ with $\theta(\!P^{(n)})=\!P^{(n+1)}$ and $\omega_0$ is the unique normalize trace on $\IM$.
\end{lem} 

\vsp 
\begin{proof} 
We set 
$$\IP_R = \IP \bigcap \IM_R = \IE_R(\IP)$$ 
and 
$$\IP_{R_n} = \IP \bigcap \IM_{R_n}=\IE_{R_n}(\IP),$$ 
where $R_n = \{ k \in \IZ: k > n \}$. We also set a sequence of increasing 
$C^*$ algebras defined by 
$$\IM_P^{(n)}=\IP_R \bigcap \IP'_{R_n}$$ 
for $n \ge 1$. We claim that 
$$\IP_R = \vee_{n \ge 1} \IM^{(n)}_P$$
We need to show the non trivial inclusion $\IP_R \subseteq \vee_{n \ge 1} \IM^{(n)}_P$. We fix any $x \in \IP_R$ and consider the sequence $x_n = \IE_{\Lambda_n}(x)$, where $\Lambda_n=\{ k \in \IZ: 1 \le k \le n \}$. Since $\IE_{\Lambda}(\IP_R)=\IP_R \bigcap \IM_{\Lambda}$, we get $x_n \in \IP_R \bigcap \IM_{\Lambda_n} \subseteq \IP_R \bigcap \IP'_{R_n}$, where we have used $\IP_{R_n} \subseteq \IM_{R_n}$ and so $\IM_{\Lambda_n} = \IM_{R_n}' \subseteq \IP'_{R_n}$. Since $||x-x_n|| \raro 0$ as $n \raro \infty$, we conclude that $x \in \vee_{n \ge 1} \IM^{(n)}_P$.  

\vsp 
Since $\theta(\IP)=\IP$, we get $\theta(\IP_R) = \IP \bigcap \theta(\IM_R) \subset \IP_R$. Thus the $C^*$-inductive limit of $\IP_R \raro ^{\theta_R} \IP_R$ is $\IP$ and  
$\IP = \vee_{n \in \IZ} \IP^{(n)}$
where $\IP^{(n)} = \theta^n(\IP_R) \bigcap \theta^{n+1}(\IP_R)'$ and $\IP^{(n)}$ is a commuting family of $C^*$ sub-algebra of $\IM$ with $\theta(\IP^{(n)})=\IP^{(n+1)}$. 

\vsp 
We consider the $C^*$ dynamical system $(\IP,\theta,\omega_0)$, where $\omega_0$ is the unique normalize trace on $\IM$. The Connes-St\o rmer dynamical entropy of $(\IP,\theta,\omega_0)$ is less then equal to the Connes-St\o rmer dynamical entropy of $(\IM,\theta,\omega_0)$ as $\IP \subseteq \IM$. 
The Connes-St\o rmer dynamical entropy being an invariant of $(\IP,\theta,\omega_0)$, is equal to the mean entropy i.e. 
$$S(\omega_0|\IP^{(0)}) \le S(\omega_0|\IM^{(0)})=ln(d),$$ 
where $S(\omega_0|\IP^{(0)})$ and $S(\omega_0|\IM^{(0)})$ are the total entropies of the states $\omega_0$ restricted to $\IP^{(0)}$ and $\IM^{(0)}$ respectively. Thus 
$\IP^{(0)}$ is isomorphic to a $C^*$-sub algebra $P^{(0)}$ of $\!M^{(0)}(\IC) \equiv \IM_d(\IC)$. 

\vsp 
We set $P^{(n)}=\theta^n(P^{(0)})$ and complete the proof that $\IP \equiv \otimes_{n \in \IZ}P^{(n)}$ by evoking the line of argument used in the proof of Proposition 1.1 to show that the map 
$$\pi_{n \in \IZ}x_n \raro \otimes_{n \in \IZ} \tilde{x}_n$$ 
is an isomorphism, where $i_0: x_0 \raro \tilde{x}_0$ is the isomorphism between 
$\IP^{(0)}$ and $P^{(0)}$ and $\tilde{x}_n= \theta^n(i_0(\theta^{-n}(x_n)))$ for all 
$x_n \in \IP^{(n)}$. 
\end{proof} 

\vsp 
\begin{thm}  
Let $\beta_0$ be an automorphism on $\ID^e$ commuting with $\theta$ then there exists an automorphism $\beta$ on $\IM$ that commutes with $\theta$. If $\alpha$ is an another automorphism extending $\beta_0$ then $\beta  \alpha^{-1}= \beta_{\underline{z}}$ for
some $\underline{z} \in (S^1)^d$.    
\end{thm}  

\vsp 
\begin{proof} 
For each $n \in \IZ$, we consider the orthogonal projections $f^k_k(n)=\beta_0(|e_k\rangle\langle e_k|^{(n)})):1 \le k \le d \}$ and the commutative $C^*$ algebra $\clc_0$ generated by elements $\{ \beta_0(|e_k\rangle\langle e_k|^{(n)}):1 \le k \le d,\;n \neq 0 \}$. We claim that $C^*$ algebra $\clc_0'$ is isomorphic to $\ID^e \vee \!M^{(0)}_d(\IC)$. 

\vsp 
By Lemma 4.2 the family of projections $(f^k_k(0):1 \le k \le d)$ are elements in $\IM_{\Lambda}$ for some finite subset $\Lambda$ of $\IZ$ and so $\clc'_0 \subseteq \IM_{\Lambda} \otimes \ID^e_{\Lambda'}$ 

\vsp 
Furthermore, we find some local automorphism $\alpha_{\Lambda}$ which acts trivially on $\IM_{\Lambda'}$ such that    
$$\alpha_{\Lambda}(\clc_0) = \theta^k(\ID^e_{\IZ_*}),\;\;\IZ_*= \{n \in \IZ, n \neq 0 \}$$ 
for some $k \in \IZ$. By taking the commutant of the equality, we get 
$$\theta_{-k}(\alpha_{\Lambda}(\clc_0')) = \ID^e \vee \!M_d^{(0)}(\IC),$$
where we have used Lemma 4.1 (d). So $\clc_0'$ is isomorphic to $\ID^e \vee \!M_d^{(0)}(\IC)$. 

\vsp 
Thus we can find elements $(f^k_j(0):1 \le k,j \le d)$ in $\clc_0'$ satisfying the matrix relations namely 
$$(f^i_j(0))^*=f^j_i(0): f^i_j(0)f^k_l(0)=\delta^k_jf^i_l(0)$$
extending the isomorphism $\beta_0:\ID^e \raro \ID^e$ to 
$$\beta_0:\ID^e \vee \!M^{(0)}_d(\IC) \raro \clc_0'$$ 
with 
$$\beta_0(e^i_j(0))=f^i_j(0),\;1 \le i,j \le d$$

\vsp 
We set $\beta$ on $\!M_d^{(n)}(\IC)$ by 
$$\beta(x)= \theta^n  \beta_0 \theta^{-n}(x)$$ 
We extend $\beta$ on $\IM_{loc}$ by linearity and $*$-multiplicative property. 
Let $\beta:\IM \raro \IM$ be the unique bounded extension of $\beta:\IM_{loc} \raro \IM$. Thus $\beta$ is a unital $*$-homomorphism.   

\vsp 
Since $\beta_0 \theta = \theta \beta_0$ on $\ID^e$, we verify for $x=\theta^n(x_0)$ with $x_0 \in \ID^e_{\{0\}}$ that 
$$\beta(x) = \theta^n \beta_0  \theta^{-n}  \theta^n(x_0)$$
$$=\theta^n \beta_0(x_0)$$
$$=\beta_0 \theta^n(x_0)$$
$$=\beta_0(x)$$     
Thus the map $\beta$, which is linear and $*$-multiplicative on $\IM$, in particular satisfies $\beta=\beta_0$ on $\ID^e$.  

\vsp 
We need to show $\beta$ is an automorphism on $\IM$. The multiplicative property of $\beta$ on $\IM$ says that $\cln=\{z :\beta(z)=0 \}$ is a two sided ideal of $\IM$. $\IM$ being a 
simple [Pow,ChE,BR1], $C^*$ algebra $\cln$ is either trivial null set or $\IM$. Since $\beta(I)=I$, we conclude that $\cln$ is the trivial null space i.e. $\beta$ is injective. In particular the map $x \raro ||\beta(x)||$ is a $C^*$-norm on $\IM$ where by $*$ homomorphism property of $\beta$ we verify that 
$$||\beta(x)^*\beta(x)||$$
$$=||\beta(x^*)\beta(x)||$$
$$=||\beta(x^*x)||$$  
for all $x \in \IM$.
However, $C^*$ norm being unique on a $*$-algebra if it exists, we get 
$||\beta(x)||=||x||$ for all $x \in \IM$. 

\vsp 
Furthermore, the map $\beta$ being norm preserving, $\beta(\IM)$ is norm closed and thus a $C^*$-sub algebra of $\IM$. Since $\beta \theta = \theta \beta$ on $\IM$, we also have $\theta(\beta(\IM))=\beta(\theta(\IM))=\beta(\IM)$. So the equality $\beta(\IM)=\IM$ 
follows by Lemma 4.6 once we show that $\beta(\IM)'=\IC$.  

\vsp 
Since $\beta(\ID^e)=\beta_0(\ID^e)=\ID^e$ and $\ID^e$ is maximal abelian, an element $x \in \beta(\IM)'$ is also in $\ID^e$ and thus $x=\beta_0(y)$ for some $y \in \ID_e$. Now we verify the following simple equalities for all $z \in \IM$: 
$$\beta(yz-zy)$$
$$=\beta(y)\beta(z)-\beta(z)\beta(y)$$
$$=\beta_0(y)\beta(z)-\beta(z)\beta_0(y)$$
$$=x\beta(z)-\beta(z)x$$
$$=0,$$
where $x \in \beta(\IM)'$. Thus we have $yz-zy=0$ for all $z \in \IM_{\Lambda}$ by the injective property of $\beta$. So we conclude that $y$ is a scaler multiple of identity. This shows $x=\beta_0(y)$ is also a scaler multiple of the identity element of $\IM$ i.e.  
$\beta(\IM)'=\IC$.
\end{proof}

\section{A complete isomorphism theorem for Bernoulli states of $\IM$ }

\vsp 
Let $\omega$ be a faithful state on the $C^*$ algebra $\IM=\otimes_{n \in \IZ} \!M_d^{(n)}(\IC)$ and $(\clh_{\omega},\pi_{\omega}, \zeta_{\omega})$ be the GNS representation of $(\IM,\omega)$ so that $\omega(x)=\langle \zeta_{\omega}, \pi_{\omega}(x) \zeta_{\omega} \rangle$ for all $x \in \IM$. Thus the unit vector $\zeta_{\omega} \in \clh_{\omega}$ is cyclic and separating for von-Neumann algebra $\pi_{\omega}(\cla)''$ that is acting on the Hilbert space $\clh_{\omega}$. Let $\Delta_{\omega}$ and $\clj_{\omega}$ be the modular and conjugate operators on $\clh_{\omega}$ respectively of $\zeta_{\omega}$ as described in section 2. We recall, the modular automorphisms group $\sigma^{\omega}=(\sigma^{\omega}_t:t \in \IR)$ 
of $\omega$ is defined by 
$$\sigma_t^{\omega}(a)=\Delta^{it}_{\omega}a\Delta^{-it}$$
for all $a \in \pi_{\omega}(\IM)''$. 

\vsp 
For a given faithful state $\omega$ of $\IM$, $(\sigma^{\omega}_t)$ may not keep $\cln_{\omega}$ invariant, where $\cln_{\omega}=\pi_{\omega}(\ID^e)''$ is a 
von-Neumann sub-algebra of $\clm_{\omega}=\pi_{\omega}(\IM)''$. However, for an 
infinite tensor product state $\omega_{\rho} = \otimes_{n \in \IZ} \rho^{(n)}$ with $\rho^{(n)}=\rho$ for all $n \in \IZ$, $\sigma_t^{\omega_{\rho}}= \otimes_{n \in \IZ}\sigma_t^{\rho_n}$ preserves $\cln_{\omega}$ if $\rho$ is a diagonal matrix in 
the orthonormal basis $e=(e_i)$ of $\IC^d$. In such a case, $E_{\omega_{\rho}}: \clm_{\omega_{\rho}} \raro \cln_{\omega_{\rho}}$ satisfy the bi-module property (24) 
and furthermore, we have  
\be 
\IE_{\omega_{\rho}}(\pi_{\omega_{\rho}}(x))=\pi_{\omega_{\rho}}(\IE_{\omega_0}(x))
\ee
for all $x \in \IM$, where $\IE_{\omega_0}$ is the norm one projection from $\IM$ onto $\ID^e$ with respect to the unique normalised trace $\omega_0$ of $\IM$. One can write explicitly 
$$\IE_{\omega_0} = \otimes_{n \in \IZ} \IE^{(n)}_0$$
where $\IE^{(n)}_0 = E_0$ for all $n \in \IZ$ and $E_0$ is the norm one projection 
from $\!M_d(\IC)$ onto the algebra of diagonal matrices $D_d(\IC)$ with respect to 
the basis $(e_i)$. The map $E_0:\!M_d(\IC) \raro \!D_d(\IC)$ is given by 
$$E_0(x) = \sum_{1 \le i \le d} \langle e_i, x e_i \rangle |e_i \rangle \langle e_i|$$
for all $x \in \!M_d(\IC)$. 

\vsp 
The map $\rho \raro \hat{\rho}$ is one to one and onto between the set of density matrices in $\!M_d(\IC)$ and the set of states on $\!M_d(\IC)$, where 
$$\hat{\rho}(x)=tr(x\rho)$$ 
for all $x \in \!M_n(\IC)$. We define entropy for a density matrix $\rho$ in $\!M_d(\IC)$ by 
$$S(\rho) = -tr(\rho ln \rho)$$
$$=-\sum_i \lambda_i ln(\lambda_i),$$ 
where $\rho= \sum_{1 \le i \le d} \lambda_i |e_i \rangle\langle e_i|$ 
for some $\lambda_i \in [0,1]$ with $\sum_i \lambda_i = 1$. One alternative 
description of $S(\rho)$ is given by a variational formula [OP]:   
$$S(\rho) = \mbox{inf}_{f=(f_i)} - \sum_i \hat{\rho}(|f_i\rangle\langle f_i|)ln (\hat{\rho}(|f_i\rangle\langle f_i|)),$$ 
where infimum is taken over all possible orthonormal basis. Thus the variational expression for $S(\rho)$ achieves its values for the basis $e=(e_i)$ that makes $\rho$ diagonal.   

\vsp 
\begin{thm} 
Mean entropy is a complete invariant of translation dynamics for the class of tensor product faithful states of $\IM$ i.e. Two infinite tensor product faithful states $\omega_{\rho}$ and $\omega_{\rho'}$ give isomorphic translation dynamics if and only if $S(\rho)=S(\rho')$. 
\end{thm}

\vsp 
\begin{proof} 
For the time being, we assume both the faithful states $\rho$ and $\rho'$ admit 
diagonal representations with respect to an orthonormal basis $e=(e_i)$ of $\IC^d$. 
So the restrictions of $\omega_{\rho}$ and $\omega_{\rho'}$ are Bernoulli states on $\ID^e$ with equal Kolmogorov-Sinai dynamical entropies. Thus by a theorem of D. Ornstein [Or1], we find an automorphism $\beta_0:\ID^e \raro \ID^e$ such that $\beta_0 \theta = \theta \beta_0$ on $\ID^e$ and $\omega_{\rho'}= \omega_{\rho} \beta_0$ on $\ID^e$. 

\vsp 
Let $\beta$ be an automorphism on $\IM$ that extends $\beta_0:\ID^e \raro \ID^e$ and commutes with $\theta:\IM \raro \IM$. Such an automorphism exists 
by Theorem 4.7.  

\vsp 
Now we consider the state $\omega= \omega_{\rho} \beta$ on $\IM$. The state $\omega$ is faithful and its modular automorphism group $\sigma_t^{\omega}:\pi_{\omega}(\IM)'' \raro \pi_{\omega}(\IM)''$ associated with the cyclic and separating vector $\zeta_{\omega}$ for $\pi_{\omega}(\IM)''$ satisfies the relation 
\be 
\sigma_t^{\omega} \hat{\beta} = \hat{\beta} \sigma_t^{\omega_{\rho}}
\ee 
on $\pi_{\omega_{\rho}}(\IM)''$, where $\hat{\beta}: \pi_{\omega_{\rho}}(\IM)'' \raro \pi_{\omega}(\IM)''$ is the automorphism defined by extending the map
\be 
\hat{\beta}: \pi_{\omega_{\rho}}(x) \raro \pi_{\omega}(\beta(x))
\ee   
for all $x \in \IM$. For a proof, we can use uniqueness of the automorphisms group of $\omega$ satisfying ${1 \over 2}$-KMS condition (21) [BR2].

\vsp 
Since automorphism $\beta$ is an extension of automorphism $\beta_0$ of $\ID^e$, the equation (31) says that $\sigma_t^{\omega}$ also preserves $\pi_{\omega}(\ID^e)''$. Thus once again by a theorem of M. Takesaki [Ta2], there exists a norm one projection 
$$\IE_{\omega}:\pi_{\omega}(\IM)'' \raro \pi_{\omega}(\IM)''$$ 
with range equal to $\pi_{\omega}(\ID^e)''$ satisfying 
$$\omega = \omega \IE_{\omega} $$ 
on $\pi_{\omega}(\IM)''$.

\vsp 
The rest of the proof are done in the follows elementary steps:

\vsp 
\NI (a) $\hat{\beta}^{-1} \IE_{\omega_{\rho}} \hat{\beta} = \IE_{\omega}$;

\vsp 
\begin{proof} 
It is obvious that both the maps $\IE_{\omega}$ and $\hat{\beta}^{-1} \IE_{\omega_{\rho}} \hat{\beta}$ are norm one projections satisfying bi-module property (24) from $\pi_{\omega}(\IM)''$ onto $\pi_{\omega}(\ID^e)''$. Furthermore, 
$$\omega \hat{\beta}^{-1} \IE_{\omega_{\rho}} \hat{\beta}$$ 
$$=\omega_{\rho} \IE_{\omega_{\rho}} \hat{\beta}$$
$$=\omega_{\rho} \beta$$
$$=\omega$$
Thus by uniqueness of the norm project with respect to $\omega$, we get the equality in (a). 
\end{proof} 

\vsp 
\NI (b) $\IE_{\omega} \pi_{\omega}(x) = \pi_{\omega}(\IE_{\omega_0}(x))$ for all $x \in \IM$. 

\begin{proof}
For any automorphism $\beta$, $\omega_0 \beta$ is also a normalised trace and thus by the uniqueness of normalised trace, we get $\omega_0 \beta = \omega_0$. Furthermore, $\beta^{-1} \IE_{\omega_0} \beta$ is also the norm one projection with respect to the unique 
normalised trace $\omega_0$ and thus we get $\beta^{-1} \IE_{\omega_0} \beta = \IE_{\omega_0}$ on $\IM$ by the uniqueness of the norm one projection with respect to $\omega_0$. We now complete the proof of (b) by the covariant property (a) and 
(30) valid for $\omega_{\rho}$.   
\end{proof} 

\vsp 
Let $\alpha_{p}$ be the automorphism on $\IM$ that extends linearly the map $x= \otimes_{n \in \IZ} x_n \raro \otimes_{n \in \IZ} x_{p(n)}$, where 
$p$ is a permutation on $\IZ$ that keeps all but finitely many points of $\IZ$ 
unchanged.

We also define automorphism $\hat{\alpha}_p:\pi_{\omega}(\IM)'' \raro \pi_{\omega}(\IM)''$ by extending the map 
$$\pi_{\omega}(x) \raro \pi_{\omega}(\alpha_p(x))$$
for all $x \in \IM$ to its weak$^*$-closer. 
In particular, we have 
$$\alpha_{p} \IE_{\omega_0} = \IE_{\omega_0} \alpha_{p}$$ on $\IM$ by (30). 

\vsp 
\NI (c) $\omega \alpha_{p} = \omega$ for all permutation $p$ on $\IZ$. 

\begin{proof} We verify the following equalities: 
$$\omega(x)$$
$$=\langle \zeta_{\omega},\pi_{\omega}(x) \zeta_{\omega}\rangle$$
$$=\langle \zeta_{\omega}, \IE_{\omega}(\pi_{\omega}(x)) \zeta_{\omega} \rangle$$
(since $\omega \IE_{\omega}=\omega$ )
$$=\langle \zeta_{\omega}, \hat{\alpha}_p ( \IE_{\omega}(\pi_{\omega}(x)) \zeta_{\omega} \rangle$$
(since $\omega \alpha_p = \omega$ on $\ID^e_Z$ as $\omega=\omega_{\rho'}$ on $\ID^e$ by 
our construction.) 
$$=\langle \zeta_{\omega}, \hat{\alpha}_p (\pi_{\omega}(\IE_{\omega_0}(x))) \zeta_{\omega} \rangle$$
( by (b) )
$$= \langle \zeta_{\omega}, \pi_{\omega}( \alpha_p(\IE_{\omega_0}(x))) \zeta_{\omega}$$
( by definition of $\hat{\alpha}_p$ )
$$= \langle \zeta_{\omega}, \pi_{\omega}( \IE_{\omega_0}(\alpha_p(x))) \zeta_{\omega} \rangle$$
( since $\IE_{\omega_0} \alpha_p = \alpha_p \IE_{\omega_0}$ )
$$= \langle \zeta_{\omega}, \IE_{\omega}(\pi_{\omega}(\alpha_p(x))) \zeta_{\omega} \rangle$$
( again by (b) )
$$= \langle \zeta_{\omega}, \pi_{\omega}(\alpha_p(x)) \zeta_{\omega} \rangle$$
( since $\omega = \omega \IE_{\omega}$ )
$$=\omega(\alpha_p(x))$$
\end{proof} 

\vsp 
\NI (d) $\omega$ is an infinite tensor product state;

\begin{proof} 
It follows from (c) once we use a quantum version of classical de Finetti theorem [St\o 2], which says any translation invariant factor state with permutation symmetry 
is an infinite product state.    
\end{proof} 

\vsp 
The state $\omega$ being an infinite product translation invariant state of $\IM$,   
we may write $\omega=\otimes_{n \in \IZ} \omega^{(n)}$, where $\omega^{(n+1)}=\omega^{(n)}$ for all $n \in \IZ$ and $\omega^{(0)}(x)=tr(x\rho_0)$ 
for some density matrix $\rho_0$ in $\!M_d(\IC)$. The mean entropy being equal to CS dynamical entropy for any infinite product state, we get $S(\rho_0)=S(\rho)$. 

\vsp 
Since $\omega = \omega_{\rho} \beta_0 = \omega_{\rho'}$ on $\ID^e$ by our construction, $\hat{\rho}_0(|e_k \rangle \langle e_k|)=\hat{\rho'}(|e_k \rangle \langle e_k|)$ for all $1 \le k \le d$. Thus 
$$S(\rho_0)$$
$$=S(\rho)$$
$$=S(\rho')$$
$$= - \sum_k \hat{\rho'}(|e_k \rangle \langle e_k|) ln \hat{\rho'}(|e_k \rangle \langle e_k|)$$
$$= - \sum_k \hat{\rho}_0(|e_k \rangle\langle e_k|)ln(\hat{\rho}_0(|e_k \rangle\langle e_k|))$$ 

\vsp 
By our starting assumption $\rho'$ is also diagonal with respect to the orthonormal basis $e=(e_i)$. Thus $\rho_0=\rho'$ once we show $\rho_0$ is also diagonal in the basis $e=(e_i)$.

\vsp 
\NI (e) For a density matrix $\rho_0$ in $\!M_d(\IC)$ we have 
\be 
S(\rho_0) = - \sum_i \hat{\rho}_0(|e_i \rangle \langle e_i|)ln(\hat{\rho}_0(|e_i \rangle \langle e_i|))
\ee 
if and only if $\rho_0 = \sum_i \lambda_i |e_i \rangle \langle e_i|$ for some $\lambda_i \in [0,1]$ i.e. $\rho_0$ admits a diagonal representation in the basis $e=(e_i)$. 

\vsp 
\begin{proof} 
Let $E_0$ be the norm one projection from $\!M_d(\IC)$ on $d$ dimensional diagonal matrices $D_d(\IC)$ with respect to the orthonormal basis $e=(e_i)$. Then equality in 
(33) says $S(\rho_0)=S(E_0(\rho_0))$ as 
$$\hat{\rho_0} \circ E_0(x)$$
$$=tr(\rho_0 E_0(x))$$
$$=tr(E_0(\rho_0)E_0(x))$$
$$=tr(E_0(\rho_0)x)$$
for all $x \in \!M_d(\IC)$. We compute now the relative entropy [OP] 
$$S(\hat{\rho}_0, \hat{\rho}_0 \circ E_0))$$
$$= - tr( \rho_0 ln (\rho_0 - E_0(\rho_0))$$
$$= S(\rho_0) + tr E_0(\rho_0 ln E_0(\rho_0))$$
$$(\mbox{ by the bi-module property of}\;E_0 )$$
$$= S(\rho_0) + tr E_0(\rho_0) ln (E_0(\rho_0))$$
$$= S(\rho_0) - S( E_0(\rho_0))$$
$$=0\;\;\mbox{by (33)}$$ 
Thus $\hat{\rho}_0 = \hat{\rho}_0 E_0$ i.e. $\rho_0=E_0(\rho_0)$. This completes the proof for `only if' part. If part is trivial.   
\end{proof} 

\vsp 
Thus we complete the proof by concluding $\hat{\rho}_0$ admits a diagonal representation in $e=(e_i)$ and so $\rho_0=\rho'$ since they equal values on $|e_i \rangle\langle e_i|$ for all $1 \le i \le d$ since $\omega = \omega_{\rho'}$ on $\ID^e$. Thus $\omega = \omega_{\rho_0}=\omega_{\rho'}$.

\vsp 
For more general faithful states $\rho$ and $\rho'$ of $\!M_d(\IC)$ satisfying $S(\rho)=S(\rho')$, we can find an element $g \in SU_d(\IC)$ such that $\rho$ and $\rho' \beta_g$ are simultaneously diagonalizable with respect to an orthonormal basis 
$(e_i)$. Since $S(\rho')=S(g\rho'g^*)$ and $\beta_g \theta = \theta \beta_g$ for 
any $g \in SU_d(\IC)$, we can complete the proof by the first part of the argument. 
This completes the proof.
\end{proof} 

\vsp 
Note that the faithful property of states of $\IM$ are also an invariant for translation dynamics and thus two infinite translation invariant product states with equal entropy need not give isomorphic dynamics. This observation is a contrast to classical situation where $\ID^e=C(\Omega^{\IZ})$ are isomorphic as a topological space for all values of $d \ge 2$, where $\Omega=\{1,2,..,d\}$. In contrast $C^*$ algebras $\otimes_{n \in \IZ} \!M^{(n)}_2(\IC)$ and $\otimes_{n \in \IZ}\!M^{(n)}_3(\IC)$ are not isomorphic [Gl].

\vsp 
\section{ A weak complete invariant for Kolmogorov states of $\IM=\otimes_{n \in \IZ}\!M^{(n)}_d(\IC)$ } 

\vsp   
\begin{pro} Let $\omega$ be a translation invariant pure state on $\IM=\otimes_{n \in \IZ} \!M^{(n)}_d(\IC)$ with Kolmogorov property. Then its Mackey index  of shift with respect to filtration generated by 
$E=[\pi_{\omega}(\IM_R)'\zeta_{\omega}]$  
is equal to $\aleph_0$. Same holds 
for filtration generated by the 
projection $F=[\pi_{\omega}(\IM_L)''\zeta_{\omega}]$ for a translation invariant state with Kolmogorov property. 
\end{pro} 

\vsp 
\begin{proof}  
Let $E$ be the support projection of $\omega$ in $\pi_{\omega}(\IM_R)''$ 
, where $(\clh_{\omega},\pi_{\omega},\zeta_{\omega})$ is the GNS space associated with the state $\omega$ on $\IM$, i.e. $E=[\pi_{\omega}(\IM_R)'\zeta_{\omega}]$. By Haag duality [Mo5] we also have $E=[\pi_{\omega}(\IM_R)'\zeta_{\omega}]=[\pi_{\omega}(\IM_L)''\zeta_{\omega}]$ and so $\theta(E)=[\pi_{\omega}(\theta(\IM_L))''\zeta_{\omega}]$. We claim that $\theta(E)-E$ is infinite dimensional. That $\theta(E)-E \neq 0$ follows as $\theta^n(E) \uparrow I$ 
as $n \uparrow \infty$ and $\theta^n(E) \downarrow |\zeta_{\omega}\rangle\langle\zeta_{\omega}|$ as $n \downarrow -\infty$. We choose a unit vector $g \in \theta(E)-E$ and thus in particular $g \perp \zeta_{\omega}$ since $\theta(E)\zeta_{\omega}=\zeta_{\omega}$. We set temporary notation $G=[\pi_{\omega}(\IM_R)'g]$ and note that $G \subseteq \theta(E)$ as $\IM_L \subseteq \theta(\IM_L)$ and $G \perp E$ as $g \perp \zeta_{\omega}$. Thus $G \le \theta(E)-E$. We claim that $G$ is an infinite dimensional subspace 
as otherwise we find a finite dimensional representation of $\IM_R$ on $G$ by restricting the representation $\pi_{\omega}$ of $\IM_L$ to $G$. That contradicts the simplicity of $\IM_L$ as a $C^*$-algebra [Mu].  

\vsp 
The proof uses Haag duality property but one may as well get a proof of Proposition 2.1 without using it as follows. In the following we use filtration generated by $F$ to give an alternative proof. 

\vsp 
Let $F=[\pi_{\omega}(\IM_L)''\zeta_{\omega}]$ be the projection in the GNS space 
$(\clh_{\omega},\pi_{\omega},\zeta_{\omega})$. Thus $\theta^{n+1}(F) \ge \theta^n(F)$ for all $n \in \IZ$ and $\theta^n(F) \uparrow I$ by cyclic property of the GNS representation. By Proposition 1.1, Kolmogorov property of $\omega$ is equivalent to 
$F_n=\theta^n(F) \downarrow |\zeta_{\omega}\rangle\langle\zeta_{\omega}|$ in strong operator topology as $n \downarrow -\infty$. Thus $F \ominus |\zeta_{\omega}\rangle\langle\zeta_{\omega}|$ rises to a system of imprimitivity with action of translation $\theta$ i.e. 
$$0 \le \theta^m(F_n-|\zeta_{\omega}\rangle\langle\zeta_{\omega}|) = F_{n+m}-|\zeta_{\omega}\rangle\langle\zeta_{\omega}|$$ and 
$F_n- |\zeta_{\omega}\rangle\langle\zeta_{\omega}| \downarrow 0$ 
as $n \downarrow -\infty$. By going along the same line of proof above, we choose a unite vector $g \in \theta(F)-F$ and use simplicity of the $C^*$-algebra $\IM_L$ to shows $G=[\pi_{\omega}(\IM_L)''g]$ is an infinite dimensional projection and thus the Mackey system generated by the projection $F-\zeta_{\omega}\rangle\langle\zeta_{\omega}|$ by shift is of index $\aleph_0.$ 
\end{proof}          

\vsp 
\begin{thm} 
Two translation invariant pure states on $\IM$ with Kolmogorov property gives unitarily equivalent shift on $\IM$.   
\end{thm} 

\begin{proof} 
Let $\omega,\omega'$ be two such states of $\IM$. By Mackey's theorem [Mac], as their Mackey's indices are equal to $\aleph_0$ by Proposition 6.1, we get 
a unitary operator $U:\clh_{\omega} \raro \clh_{\omega'}$ such that 
\be 
USU^*=S',\;\;UFU^*=F'
\ee 
In particular we get $U\Theta(X)U^*=\Theta(UXU^*)$ for all $X \in \pi_{\omega}(\IM)''$. 
\end{proof} 

\vsp 
\begin{thm} 
Let $\omega$ be a translation invariant pure state on $\IM$ with Kolmogorov property. Then $(\IM,\theta,\omega)$ is unitarily equivalent to a Bernoulli dynamics $(\IM,\theta,\omega_{\lambda})$, where $\lambda \in \IC^n$ is a unit vector. 
\end{thm} 

\vsp
\begin{proof} 
A pure Bernoulli state on $\IM$ admits Kolmogorov property and thus it is a simple case and proof follows by Theorem 6.2. 
\end{proof} 

\vsp 
It is already known [NeS] that Connes-St\o rmer dynamical entropy is zero for a pure translation invariant state. In general for a translation invariant state on $\IM$, $h_{CS}(\omega) \le s(\omega)$, where $s(\omega)$ is the mean entropy. One of the standing conjecture in the subject about equality. In particular it is not known yet whether $s(\omega)$ is zero for a translation invariant pure state. As a first step towards the conjecture it seems worth to investigate this question for such a state with Kolmogorov property. Note that mean entropy is yet to be interpreted as an invariant for the shift! However it is not hard to verify mean entropies of translation invariant states $\omega_1,\omega_2$ are equal if $(\IM,\theta,\omega_1)$ and $(\IM,\theta,\omega_2)$ are isomorphic provided the isomorphism $\beta$ is 
local i.e. preserves the algebra $\IM_{loc}=\bigcup_{\Lambda: |\Lambda| < \infty}\IM_{\Lambda}$. Thus in contrast to classical situation and Connes-St\o rmer's theory of Bernoulli shifts we have the following.  

\vsp 
\begin{thm} 
Let $(\IM,\theta,\omega)$ be as in Theorem 6.3. Then $(\IM,\theta,\omega)$ and $(\IM,\theta^k,\omega)$ are unitarily equivalent dynamics with Connes-St\o rmer dynamical entropy equal to zero but not isomorphic as $C^*$ dynamics for $k \neq 1$.     
\end{thm}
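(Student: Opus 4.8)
The plan is to prove the three assertions separately, spending almost all the effort on the non-isomorphism, which is where the genuine obstruction lies. First I would observe that $(\clb,\theta^k,\omega)$ is itself a Kolmogorov system: the filtration attached to $\theta^k$ is the subsequence $E_{kn]}=[\pi_{\omega}(\theta^{kn}(\clb_R))'\zeta_{\omega}]$ of the $\theta$-filtration, and since $E_{n]}\downarrow|\zeta_{\omega}\rangle\langle\zeta_{\omega}|$ as $n\downarrow-\infty$ the same holds along the subsequence $kn$. Proposition 2.1 applied to both $\theta$ and $\theta^k$ then gives Mackey index $\aleph_0$ in each case. Concretely, if $S$ implements $\theta$ on $\clh_{\omega}$, the Mackey decomposition makes $S$ restricted to $I_{\clh_{\omega}}-|\zeta_{\omega}\rangle\langle\zeta_{\omega}|$ a bilateral shift of multiplicity $\aleph_0$; regrouping $\IZ=\coprod_{0\le j<k}(k\IZ+j)$ exhibits $S^k$ on the same subspace as a bilateral shift of multiplicity $k\cdot\aleph_0=\aleph_0$. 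By Mackey's theorem there is then a unitary $U$ on $\clh_{\omega}$ with $U\zeta_{\omega}=\zeta_{\omega}$ and $USU^*=S^k$, which is exactly the asserted unitary equivalence of $(\clb,\theta,\omega)$ with $(\clb,\theta^k,\omega)$.

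The entropy statement is then immediate: a Kolmogorov state is pure [Mo1,Mo4], the Connes-St\o rmer dynamical entropy of a pure translation invariant state vanishes [NeS], so $h_{CS}(\omega)=0$ for $\theta$ and a fortiori for $\theta^k$.

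The substance of the theorem is the failure of isomorphism for $k>1$, and the guiding observation is that the dynamical entropy $h_{CS}(\omega)$ attached to the state $\omega$ of the problem is useless here, being zero for both systems; the obstruction must instead be read off from the canonical trace. I would argue by contradiction: suppose $\beta:\clb\raro\clb$ is a $C^*$-automorphism with $\theta^k\circ\beta=\beta\circ\theta$ and $\omega\circ\beta=\omega$, so that $\theta^k=\beta\,\theta\,\beta^{-1}$ as automorphisms of $\clb$. Because $\clb$ is a UHF algebra it carries a unique tracial state $\tau$, and every $*$-automorphism of $\clb$ preserves $\tau$; in particular $\beta$ does. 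Writing $h_{\tau}$ for the Connes-St\o rmer entropy relative to $\tau$, conjugation invariance together with the power law yields
\be
k\,h_{\tau}(\theta)=h_{\tau}(\theta^k)=h_{\tau}(\beta\,\theta\,\beta^{-1})=h_{\tau}(\theta).
\ee
Since $\tau$ is a product state the shift satisfies $h_{\tau}(\theta)=s(\tau)=\log d>0$ [CS], and the displayed identity forces $k=1$, contradicting $k>1$.

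The hard part is precisely this separation of two entropies. One must resist attempting to distinguish $\theta$ from $\theta^k$ by the dynamical entropy of $\omega$ (which fails, as $\omega$ is pure and both entropies vanish) and recognise that the correct invariant is the Connes-St\o rmer entropy of the shift relative to the canonical trace of the UHF algebra; its strict positivity and its multiplicativity under powers are what genuinely separate $\theta$ from $\theta^k$, even though the state-dependent entropy of the system is zero.
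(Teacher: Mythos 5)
Your proposal is correct and follows essentially the same route as the paper: unitary equivalence via the Kolmogorov property and Mackey index $\aleph_0$ (Theorem 2.2), vanishing entropy from purity of Kolmogorov states [NeS], and non-isomorphism by pulling the would-be intertwiner $\beta$ back to the unique tracial state (which every automorphism preserves) and contrasting $h_{CS}(\theta)=\ln d>0$ with $h_{CS}(\theta^k)=k\ln d$ via conjugation invariance. Your explicit verification that $\theta^k$ inherits the Kolmogorov property and the regrouping $\IZ=\coprod_{0\le j<k}(k\IZ+j)$ merely flesh out the paper's appeal to Theorem 2.2, so there is no substantive difference.
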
 

\vsp 
\begin{proof} 
By Theorem 6.2 $(\IM,\theta,\omega)$ and $(\IM,\theta^k,\omega)$ are unitarily equivalent since each admits Kolmogorov's property with Mackey's index  equal 
to $\aleph_0$. For a translation invariant pure state, Connes-St\o rmer dynamical entropy is zero [NeS]. Since Kolmogorov states are also pure states [Mo2], their dynamical entropies are equal to zero.   
However Connes-St\o rmer dynamical entropy of $(\IM,\theta,\omega_0)$ is equal to mean entropy $s(\omega_0)$, where $\omega_0$ is the unique tracial state on $\IM$ 
and Conees-St\o rmer dynamical entropy of $(\IM,\theta^k,\omega_0)$ is equal to $ks(\omega_0)$. If these two dynamics were equivalent i.e. $\theta \beta = \beta \theta^k$ and $\omega \beta = \omega$ for some automorphism $\beta$
then we also would have $\omega_0 \beta = \omega_0$, $\omega_0$ being the unique trace of $\IM$. Since Connes-St\o rmer dynamical entropy is an invariant, as dynamics they are not equivalent since $s(\omega_0)=ln(d) \neq 0$.   
\end{proof}   

\vsp 
Any isomorphism on two compact Hausdorff Bernoulli spaces $\Omega^{\IZ}_d=\otimes_{n \in \IZ} \Omega^{(n)}_d$, $\Omega_d=\{1,2,..,d\}$ and $\Omega^{\IZ}_{d'}=\otimes_{n \in \IZ} \Omega^{(n)}_{d'}$, $\Omega_{d'}=\{1,2,..,d'\}$ equipped with the product topologies is local i.e. it takes any open set $\clo \in \Omega^{\IZ}_d$ with finite support to another open set with finite support in $\Omega^{\IZ}_{d'}$. An open set with finite support is a finite union of open cylinder sets. An open cylinder set is also compact in $\Omega^{\IZ}_d$ since $\Omega$ is a finite set and a continuous map takes a compact set to another compact set. However any subset of $\Omega^{\IZ}_{d'}$ can be covered by the collection of open cylinder sets and so in particular any compact subset of $\Omega^{\IZ}_{d'}$ can be covered by finitely many open cylinder sets of $\Omega^{\IZ}_{d'}$. This shows isomorphism is local. The isomorphism being local, we can use covariance relation with respect to translation action to conclude that isomorphism takes $\clf_{n]}$ to $\clf_{n+k]}$ for all $n \in \IZ$ for some fixed $k \in \IZ$. Similarly it takes backward filtration $\clf_{[n}$ to $\clf_{[n+k'}$ for all $n \in \IZ$ for some $k' \in \IZ$, where $\clf_{[n}$ is the minimal $\sigma-$field generated by the process $\{X_k: n \le k < \infty \}$. A family of classical Bernoulli states with entropy between $0$ and $ln(d)$ are embedded continuously into a Bernoulli state $\omega_{\lambda}$ on $\IM=\otimes_{n \in \IZ} \!M^{(n)}_d(\IC)$ and Kolmogorov-Sinai dynamical entropy is a complete invariant to determine a translation invariant maximal abelian von-Neumann sub-algebra upto isomorphism. On the other hand an isomorphism on $\IM= \otimes_{n \in \IZ} \!M^{(n)}_d(\IC)$ commuting with $\theta$ need not be local. We gave a precise answer to this question in Theorem 3.6.

\vsp   
In contrast for a pure Kolmogorov state $\omega$ the unitary operator $U:\clh_{\omega} \raro \clh_{\omega}$, intertwining weakly the two dynamics $(\IM,\theta,\omega)$ and $(\IM,\theta^2,\omega)$, does not preserve local algebra $\IM_{loc}$. Otherwise, $\IM$ being a simple $C^*$ algebra, we would have $U\pi_{\omega}(x)U^*=\pi_{\omega}(\beta(x))$ for some $C^*$-isomorphism $\beta:\IM \raro \IM$ by the faithful property of the representation $\pi_{\omega}$ and further inter-twinning relation $US^2=SU$ would have given $\theta \beta = \beta \theta^2$ on $\IM$. Since such a relation is impossible, we get a contradiction. A unitary conjugation that makes two Bernoulli dynamics $(\IM,\theta,\omega_{\lambda})$ and $(\IM,\theta^2,\omega_{\lambda})$ unitarily equivalent does not make them $C^*$-isomorphic. This feature is a deviation from an isomorphism map between two compact Hausdorff spaces, representing Bernoulli or Markov shifts [Or1]. Thus this feature is purely a quantum phenomenon and a strong deviation of our intuitions valid for classical spin chains. 

\vsp 
One natural question that arises here: when two translation dynamics $(\IM,\theta,\omega)$ and $(\IM,\theta,\omega')$ with Kolmogorov pure states $\omega,\omega'$ are isomorphic? It seems that this problem is far from being understood even in its primitive form. In the following we make a simple observation.  

\begin{pro}
Two pure Kolmogorov dynamical systems $(\IM,\theta,\omega)$ and $(\IM,\theta,\omega')$ are isomorphic if and only if there exists a unitary operator $U:\clh_{\omega} \raro \clh_{\omega}$ with $U\zeta_{\omega}=\zeta_{\omega}'$ such that the following holds:
 
\NI (a) $US_{\omega}=S_{\omega'}U;$

\NI (b) $U^*\pi_{\omega}(x)U= \pi_{\omega'}(\beta(x)),\; x \in \IM$ for
some automorphism $\beta$ on $\IM$. 

\end{pro}

\vsp 
Since $\IM$ is a UHF $C^*$-algebra and $\omega,\omega'$ are pure states of $\IM$, we can apply R.T. Power's theorem (Theorem 3.7 and Corollary 3.8 in [Pow]) 
to find an automorphism $\beta:\IM \raro \IM$ such that
$$\omega(x)=\omega'(\beta(x))$$ 
for all $x \in \IM$. The question of interest for $\omega \neq \omega'$ and $\beta$ is not equal to any order of translation i.e. $\beta \neq \theta^n$ for some $n \in \IZ$. There exists a unitary  operator $U:\clh_{\omega} \raro \clh_{\omega'}$ with $U\zeta_{\omega}=\zeta_{\omega}'$ so that 
$$U\pi_{\omega}(x)U^*=\pi_{\omega'}(\beta(x))$$

\vsp 
On the other hand by Theorem 6.2, there exists $U:\clh_{\omega} \raro \clh_{\omega'}$ such that $US_{\omega}=S_{\omega'}U$. Proposition 3.5 says that we need both the relations to hold simultaneously for some unitary $U$ for isomorphic dynamics.

\begin{proof} 
Assume (a) and (b). Then by (a) we get $U^*\pi_{\omega}(\theta(x))U=\pi_{\omega'}(\theta(\beta(x)))=\pi_{\omega'}(\beta(\theta(x))$. $\IM$ being a simple $C^*$ algebra, any non degenerate representation is faithful. Thus $\theta \beta = \beta \theta$ and $\omega' \beta =\omega$. The converse statement follows along the same line by tracing back the argument.   
\end{proof}

\section{Isomorphism theorem for pure Kolmogorov states}

\vsp 
\begin{pro} 
Let $\omega_c$ be an extremal element in the convex set of translation invariant states of
$\ID^e$. Then the non empty compact convex set 
$$\cls^{\theta}_{\omega_c}=\{ \omega:\IM \raro \IC,\;\mbox{a translation invariant state such that } \omega(x)=\omega_c(x) \forall x \in \ID^e \}$$ 
is a face in the convex set of translation invariant states of $\IM$ and extremal elements are ergodic states of $\IM$. 
\end{pro} 

\vsp 
\begin{proof} 
That $\cls^{\theta}_{\omega_c}$ is a non-empty compact set follows by Krein-Hahn-Banach theorem once clubbed with averaging method using $\theta$ and weak$^*$ compactness of the set of 
states of $\IM$. For the face property of $\cls^{\theta}_{\omega_c}$, let $\omega$ be an element in $\cls^{\theta}_{\omega_c}$. If $\omega= \lambda \omega_1 +(1-\lambda)\omega_0$ for some $\lambda \in (0,1)$ and translation invariant states $\omega_0,\omega_1$, then by extremal property of $\omega_1=\omega_0=\omega_c$ on $\ID^e$. Thus $S_{\omega_c}$ is a face and any extremal element in $S_{\omega_c}$ is also an extremal element in the convex set of translation invariant states of $\IM$. Thus $\omega$ is an ergodic state of $\IM$ by Theorem 4.3.17 in [BR1] since the dynamics $(\IM,\theta^n:n \in \IZ)$ is asymptotically abelian i.e. for all $x,y \in \IM$ we have $||x\theta^n(y)-\theta^n(y)x|| \raro 0$ as $|n| \raro \infty$. 
\end{proof} 

\vsp 
\begin{rem}
Any two extremal elements in $\cls^{\theta}_{\omega_c}$ are either same or orthogonal [ES]. More generally for two translation invariant classical ergodic states $\omega_c$ and $\omega'_c$, two extremal states $\omega \in \cls^{\theta}_{\omega_c}$ and $\omega' \in S_{\omega'_c}$ are either same or orthogonal. By taking restrictions of these two states $\omega$ and $\omega'$ to $\ID^e$, we arrive at Kakutani type of dichotomy theorem [Kak,ES] that says two extremal translation invariant states on $\ID^e$ is either same or orthogonal. However, Proposition 7.1 gives no information about existence of a translation invariant pure state in $\cls^{\theta}_{\omega_c}$. An ergodic translation invariant state $\omega$ need not be even a factor state in general. As an example we may take stationary Markov state $\omega_c$ on $\{1,2\}^{\IZ}$ with transition function that exchange position in each step it moves forward. By Power's criteria [Pow] we easily show non of the extremal elements in $\cls^{\theta}_{\omega_c}$ is a factor state. 

\vsp 
However, it is not clear, whether the cyclic property $[\pi_{\omega}(\ID^e)''\zeta_{\omega}]=[\pi_{\omega}(\IM)\zeta_{\omega}]$ is good enough for 
the factor and hence the purity property of an extremal element $\omega$ in $\cls^{\theta}_{\omega_c}$. In the next section, under some additional hypothesis 
$[\pi_{\omega}(\ID^e_{R})''\zeta_{\omega}]=[\pi_{\omega}(\IM_R)''\zeta_{\omega}]$, 
we will show that $\omega$ is a pure state.  
\end{rem} 

\vsp 
\begin{pro} 
Let $\omega$ and $\omega'$ be two translation invariant pure states of $\IM$
such that $\omega=\omega'$ on $\ID^e$. If $\zeta_{\omega}$ and $\zeta_{\omega'}$ are  cyclic vector for $\pi_{\omega}(\ID^e)''$ and $\pi_{\omega'}(\ID^e)''$ in $\clh_{\omega}$ and $\clh_{\omega'}$ respectively. Then $\omega= \omega'  \beta_{\underline{z}}$ for some $\underline{z} \in (S^1)^d$.  
\end{pro}  

\vsp 
\begin{proof} 
For two extremal elements $\omega$ and $\omega'$ for which $\zeta_{\omega}$ and $\zeta_{\omega'}$ are cyclic vectors for $\pi_{\omega}(\ID^e)''$ 
and $\pi_{\omega'}(\ID^e)''$ in $\clh_{\omega}$ and $\clh_{\omega'}$ respectively, 
we set unitary operator $V:\clh_{\omega} \raro \clh_{\omega'}$ defined by extending the 
linear map 
$$\pi_{\omega}(x)\zeta_{\omega} \raro \pi_{\omega'}(x)\zeta_{\omega'}$$ 
for $x \in \ID^e$. Thus we have 
\be 
S_{\omega'}V=VS_{\omega},
\ee
where $S_{\omega}$ and $S_{\omega'}$ are unitary operators on $\clh_{\omega}$ and $\clh_{\omega'}$ respectively associated with $\omega$ and $\omega'$ as 
defined in (3). Furthermore, we have 
$$V\pi_{\omega}(\IM)''V^*=\pi_{\omega'}(\IM)''$$  
and 
$$V\pi_{\omega}(x)V^*=\pi_{\omega'}(x),\;\; x \in \ID^e$$

\vsp 
In particular UHF$_d$ algebras $\cln_1=V\pi_{\omega}(\IM)V^*$ and $\cln_2=\pi_{\omega'}(\IM)$ are isomorphic to $\IM$ with their von-Neumann completions equal to  $\clb(\clh_{\omega'})$ and both $C^*$ algebras contain commutative $C^*$ algebra $\pi_{\omega'}(\ID^e)$. 

\vsp 
Since $\omega$ and $\omega'$ are pure, by Theorem 3.7 and Corollary 3.8 in [Pow], there exists a unitary operator $U$ such that 
\be 
U\cln_1U^*=\cln_2
\ee
and an automorphism $\beta$ on $\IM$ for which $\omega = \omega' \beta$ satisfying 
\be 
UV\pi_{\omega}(x)V^*U^*=\pi_{\omega'}(\beta(x)),\;\; x \in \IM
\ee 
However such a $U$ and so $\beta$ is not unique for two given pure states 
$\omega$ and $\omega'$. We include a proof for existence of $\beta$ satisfying (36) and (37) with $\beta(x)=x$ for all $x \in \ID^e$ in Appendix A. The proof uses the line of arguments used in the proof for Corollary 3.8 in [Pow] clubbing with the present special situation, where both $\cln_1$ and $\cln_2$ contains 
$C^*$ -sub-algebra $\pi_{\omega'}(\ID^e)$. 

\vsp 
Thus by Lemma 4.3, we have $\beta= \otimes_{k \in \IZ} \beta_{\underline{z_k}}$ on $\IM$ for some $\underline{z_k} \in (S^1)^d$. However $\omega = \omega \theta$ and $\omega' = \omega' \theta$. Thus $\omega = \omega' \beta_{\underline{z}}$ for some $\underline{z} \in 
(S^1)^d$ i.e. $\ul{z_k}=\ul{z} \in (S^1)^d$ for all $k \in \IZ$.     
\end{proof} 

\vsp 
\begin{cor} 
Let $\omega$ be a translation invariant pure state of $\IM$ and $\omega_c$ be its restriction to $\ID^e$ such that $\zeta_{\omega}$ is cyclic for $\pi_{\omega}(\ID^e)''$. Let $\beta_0$ be an auto-morphism on $\ID^e$ commuting with $\theta$ so that $\omega_c = \omega_c  \beta_0$ and $\beta$ be an automorphism on $\IM$ commuting with $\theta$ and extending $\beta_0$. Then we have 
$$\omega  \beta_{\ul{z}} = \omega   \beta$$ 
for an extension of $\beta_0$ to an automorphism $\beta$ on $\IM$. 
\end{cor}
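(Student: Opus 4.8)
The plan is to reduce the statement to a direct application of Proposition 4.8. The crucial observation is that the composite state $\omega' := \omega \circ \beta$ lands in the same face $S_{\omega_c}$ as $\omega$ and inherits every structural hypothesis---purity, translation invariance, and cyclicity of the GNS vector for the diagonal subalgebra---that Proposition 4.8 requires in order to conclude that two such states differ only by a gauge automorphism $\beta_{\ul{z}}$.

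First I would record that $\omega'$ is again a translation invariant pure state. Translation invariance is immediate from $\beta \theta = \theta \beta$ and $\omega \theta = \omega$, giving $\omega' \theta = \omega \beta \theta = \omega \theta \beta = \omega \beta = \omega'$. Purity of $\omega'$ follows since $\beta$ is an automorphism and extremality in the state space is preserved under composition with an automorphism. Next I would check that $\omega'$ restricts to the same $\omega_c$ on $D_e^{\IZ}$: for $x \in D_e^{\IZ}$ one has $\omega'(x) = \omega(\beta(x)) = \omega(\beta_0(x)) = \omega_c(\beta_0(x)) = \omega_c(x)$, using that $\beta$ extends $\beta_0$ together with the invariance $\omega_c = \omega_c \circ \beta_0$. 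Hence both $\omega$ and $\omega'$ lie in $S_{\omega_c}$ and, being pure, are extremal points of this face.

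The key step is to transfer the cyclicity hypothesis from $\omega$ to $\omega'$. Here I would realize the GNS triple of $\omega'$ concretely on $\clh_{\omega}$ by setting $\pi_{\omega'} = \pi_{\omega} \circ \beta$ and $\zeta_{\omega'} = \zeta_{\omega}$; this is a GNS triple for $\omega'$ because $\pi_{\omega}(\beta(\clb))\zeta_{\omega} = \pi_{\omega}(\clb)\zeta_{\omega}$ is dense and the associated vector state reproduces $\omega'$. Since $\beta$ extends the automorphism $\beta_0$ of $D_e^{\IZ}$, one has $\beta(D_e^{\IZ}) = \beta_0(D_e^{\IZ}) = D_e^{\IZ}$, so that $\pi_{\omega'}(D_e^{\IZ})'' = \pi_{\omega}(\beta(D_e^{\IZ}))'' = \pi_{\omega}(D_e^{\IZ})''$. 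Consequently the vector $\zeta_{\omega}$, cyclic for $\pi_{\omega}(D_e^{\IZ})''$ by hypothesis, is also cyclic for $\pi_{\omega'}(D_e^{\IZ})''$, so $\omega'$ meets the cyclicity requirement as well.

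With $\omega$ and $\omega'$ thus recognized as two extremal elements of $S_{\omega_c}$ whose GNS vectors are cyclic for the respective diagonal von Neumann algebras, Proposition 4.8 applies and furnishes $\ul{z} \in (S^1)^d$ with $\omega' = \omega \circ \beta_{\ul{z}}$. Substituting $\omega' = \omega \circ \beta$ yields the asserted identity $\omega \circ \beta = \omega \circ \beta_{\ul{z}}$. I do not expect a genuine obstacle; the only point needing care is the transfer of cyclicity, which rests entirely on the invariance $\beta(D_e^{\IZ}) = D_e^{\IZ}$ forcing the two diagonal von Neumann algebras to coincide in the common GNS realization on $\clh_{\omega}$.
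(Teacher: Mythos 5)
Your proposal is correct and follows essentially the same route as the paper: set $\omega'=\omega\circ\beta$, use $\beta(D_e^{\IZ})=\beta_0(D_e^{\IZ})=D_e^{\IZ}$ to transfer cyclicity of the GNS vector for the diagonal algebra from $\omega$ to $\omega'$, and then invoke Proposition 4.8 to obtain $\omega'=\omega\circ\beta_{\ul{z}}$. Your write-up is in fact more careful than the paper's, since you explicitly verify translation invariance and purity of $\omega'$ and that $\omega'$ restricts to $\omega_c$ on $D_e^{\IZ}$ (via $\omega_c=\omega_c\circ\beta_0$), steps the paper leaves implicit.
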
 

\vsp 
\begin{proof} 
We take $\omega'=\omega  \beta$ and find unitary operator $U:\clh_{\omega} \raro \clh_{\omega'}$ that maps $\pi_{\omega'}(x)\zeta_{\omega} \raro \pi_{\omega}(\beta(x))\zeta_{\omega}$. Since $\beta(\ID^e)=\ID^e$, we check that $\zeta_{\omega '}$ is also cyclic for $\pi_{\omega'}(\ID^e)$ in $\clh_{\omega'}$.  
Now we apply Proposition 7.3 to complete the proof.  
\end{proof} 

\vsp 
\begin{proof} (of Theorem 1.2) 
Let $\beta_0$ be an automorphism on $\ID^e$ so that $\beta_0 \theta = \theta \beta_0$
on $\ID^e$ and $\omega'_c= \omega_c \beta_0$. Let $\beta$ be an automorphism of $\IM$ extending $\beta_0$ of $\ID^e$ so that $\beta \theta = \theta \beta$. Such an automorphism is guaranteed by Theorem 4.7. Now we consider pure states $\omega'$ and $\omega \beta$ of $\IM$ which extends state $\omega'_0$ of $\ID^e$. Now by Proposition 7.3, we conclude that $\omega'= \omega \beta \beta_{\underline{z}}$ 
for some $\ul{z} \in (S^1)^d$. This completes the proof as $\beta  \beta_{\ul{z}}$ commutes with $\theta$.     
\end{proof}

\section{ Monic representation of Cuntz state and Kolmogorov state } 

\vsp 
A representation $\pi:\clo_d \raro \clb(\clh)$ is called {\it monic} if there is 
a cyclic vector $\zeta \in \clh$ for the maximal abelian $C^*$ sub-algebra $\ID^e$ i.e. the set  
$$\{ \pi(s_Is_I^*)\zeta: |I| < \infty \}$$ 
of vectors is total in $\clh$. A state $\psi$ of $\clo_d$ is called {\it monic } and its associated cyclic representation $(\clh_{\psi},\pi_{\psi},\zeta_{\psi})$ is called {\it monic representation } if $\{\pi_{\psi}(s_Is_I^*)\zeta_{\psi}: |I| < \infty \}$ is total in $\clh_{\psi}$. 

\vsp 
Given a monic state $\psi$ of $\clo_d$ we get a probability measure $\omega_{\mu}$ on 
$\Omega^R$ by defining its values on cylinder sets $E= \times_{1 \le k \le n} E_k$ by 
$$\omega_{\mu}(E_1 \times E_2 ...\times E_n \times \Omega \times \Omega ...)=\sum_{I=(i_1,...i_n): i_k \in E_k}\psi(S_IS_I^*)$$
for all subsets $E_k$ of $\Omega=\{1,2,..,d \}$. Cyclic property of $\zeta_{\psi}$ for 
the von-Neumann algebra $\{ \pi_{\psi}(s_Is_I^*):|I| < \infty \}''$ ensures that  
\be 
\omega_{\mu \circ \sigma_i} << \omega_{\mu},
\ee 
where
$$\sigma_i: (\Omega^{R},\clf_R,\omega_{\mu}) \raro (\Omega^{R},\clf_R,\omega_{\mu})$$
takes a word $(w_1w_2.....)$ in $\Omega^{R}$ to $(iw_1w_2....)$ in $\Omega^{R}$, $\clf_R$ 
is the Borel $\sigma-$field generated by the discrete topology of $\Omega^{R}$ and
$$
\frac{d\omega_{\mu  \circ \sigma_i}}{d\omega_{\mu}} = |f_i|^2
$$ 
for some functions $f_i \in L_2(\Omega^{R},\clf_R,\mu)$ with the property that
\be  
f_i(x) \neq 0 \;\; \mbox{for}\;\; \mu \;\; \mbox{-a.e}\; x \in \sigma_i(\Omega^{R})
\ee
In particular, the property (39) ensures that orthogonal isometries 
$$S^{\mu}_if(s)=f  \sigma_i(s)$$
satisfying Cuntz relations (15) and associates canonically an iterated function system [DJ] on the Borel measure space $(\Omega^{R},\clf_R,\omega_{\mu})$. Such an iterated function system $(\Omega^{R},\clf_R,\sigma_i,\mu)$ satisfying (38) and (39) is called [DJ] {\it monic system}. More generally, a monic representation $\pi$ of $\clo_d$ is unitary equivalent to the Cuntz representation given by {\it a monic system}  
$$\sigma_i: (\Omega^{R},\clf_R,\omega_{\mu}) \raro (\Omega^{R},\clf_R,\omega_{\mu})$$
$$S^{\mu}_if(s)=f \circ \sigma_i(s)$$
that associates canonically as an iterated function system [DJ] on the Borel measure space $(\Omega^{R},\clf_R,\sigma_i,\omega_{\mu})$ satisfying (38) and (39). 

\vsp 
Following [DJ], we say that a monic system is non-negative if $f_i \ge 0$ for all $i \in \Omega$. 

\vsp 
So far the state $\omega_{\mu}$ on $\ID^e_{R}$ need not be a right translation invariant. The following observation says a little more when $\omega_{\mu}$ is also right translation invariant.  

\vsp 
\begin{pro} Let $\omega$ be a translation invariant factor state of $\IM$ and $\psi$ be 
an extremal element in $K_{\omega}$. If $\psi$ is monic then the following statements are 
true:

\NI (a) $\zeta_{\omega}$ is a cyclic vector for $\pi_{\omega}(\ID^e)''$ in $\clh_{\omega}$;

\NI (b) $\omega$ is pure;

\NI (c) $\omega$ is Kolmogorov if and only if the restriction of $\omega$ to $\ID^e$ is Kolmogorov in the classical sense.    

\end{pro}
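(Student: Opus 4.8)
\noindent The plan is to route everything through the diagonal subalgebra: (a) will produce the identification $\clh_\omega\cong L^2(\cld^{\IZ},\omega_c)$, and (b) will be a comparison of the quantum and classical past filtrations inside this $L^2$.

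\vsp
\NI \emph{Part (a).} First I would unwind the monic hypothesis. Since each diagonal projection $s_Is_I^*$ lies in $\mbox{UHF}_d$, totality of $\{\pi_\psi(s_Is_I^*)\zeta_\psi:|I|<\infty\}$ forces $\overline{\pi_\psi(\mbox{UHF}_d)\zeta_\psi}=\clh_\psi$ and, inside it, cyclicity of $\zeta_\psi$ for the one-sided diagonal $D_e^{\IZ_+}$. Under $\psi|_{\mbox{UHF}_d}=\omega_R$ this says precisely that $\overline{\pi_\omega(D_e^{\IZ_+})\zeta_\omega}=\overline{\pi_\omega(\clb_R)\zeta_\omega}$ inside $\clh_\omega$ (future cyclicity). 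I would then transport this along the shift: applying $S^{-n}$ (with $S\zeta_\omega=\zeta_\omega$ and $S\pi_\omega(x)S^*=\pi_\omega(\theta(x))$) gives $\overline{\pi_\omega(\theta^{-n}(D_e^{\IZ_+}))\zeta_\omega}=\overline{\pi_\omega(\theta^{-n}(\clb_R))\zeta_\omega}$, and since $\theta^{-n}(\clb_R)=\clb_{[1-n,\infty)}\uparrow\clb$ while $\theta^{-n}(D_e^{\IZ_+})=D_e^{[1-n,\infty)}\uparrow D_e^{\IZ}$, closing the union over $n$ yields $\overline{\pi_\omega(D_e^{\IZ})\zeta_\omega}=\clh_\omega$. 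Thus $\zeta_\omega$ is cyclic for $\pi_\omega(D_e^{\IZ})''$; a cyclic vector for an abelian von Neumann algebra renders it maximal abelian [SS, Chapter 2], so $\zeta_\omega$ is also separating and $\clh_\omega\cong L^2(\cld^{\IZ},\omega_c)$, with $\pi_\omega(D_e^{\IZ})''\cong L^\infty(\cld^{\IZ},\omega_c)$, $S$ the classical shift and $\zeta_\omega\equiv 1$.

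\vsp
\NI \emph{Part (b), easy direction.} I would work entirely in this $L^2$ picture. Writing $\clf_{n]}$ for the $\sigma$-field of coordinates $\le n$, the classical past filtration is $E^c_{n]}:=[\pi_\omega(D_e^{(-\infty,n]})''\zeta_\omega]=L^2(\clf_{n]},\omega_c)$, while by Haag duality the quantum filtration is the past cyclic subspace $E_{n]}=[\pi_\omega(\clb_{(-\infty,n]})''\zeta_\omega]$; since $D_e^{(-\infty,n]}\seq\clb_{(-\infty,n]}$ we have $E^c_{n]}\le E_{n]}$. By reverse-martingale convergence $\bigwedge_n E^c_{n]}=L^2(\bigcap_n\clf_{n]})$, so classical Kolmogorov is $\bigwedge_n E^c_{n]}=|\zeta_\omega><\zeta_\omega|$, and quantum Kolmogorov is $\bigwedge_n E_{n]}=|\zeta_\omega><\zeta_\omega|$. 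The forward implication is then immediate: if $\omega$ is quantum Kolmogorov, $|\zeta_\omega><\zeta_\omega|\le\bigwedge_nE^c_{n]}\le\bigwedge_nE_{n]}=|\zeta_\omega><\zeta_\omega|$, so $\omega_c$ is classically Kolmogorov.

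\vsp
\NI \emph{Part (b), the obstacle.} The converse reduces to the single equality $E_{n]}=E^c_{n]}$ for all $n$, after which the two Kolmogorov conditions literally coincide. By shift covariance $S^nE_{m]}S^{-n}=E_{m+n]}$ it suffices to treat $n=0$, i.e.\ to prove the past analogue of future cyclicity, $\overline{\pi_\omega(\clb_L)\zeta_\omega}\seq L^2(\clf_{0]},\omega_c)$ (the reverse inclusion being automatic). Concretely this asks that applying a past off-diagonal matrix unit $\pi_\omega(|e_i><e_j|^{(k)})$, $k\le 0$, to $\zeta_\omega$ produces an $\clf_{0]}$-measurable $L^2$ function; equivalently that $\pi_\omega(\clb_L)\zeta_\omega$ is orthogonal to every $g$ with $\IE[g\,|\,\clf_{0]}]=0$. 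This is the main difficulty: unlike future cyclicity it is not handed to us by the monic hypothesis, and because past and future are correlated under $\omega_c$ it cannot come from a naive orthogonality-to-the-future argument. The route I would take is the explicit monic structure of [DJ]: in the realization $s_if=f\circ\sigma_i$ with the quasi-invariance $d\omega_{\mu\circ\sigma_i}/d\omega_\mu=|f_i|^2$ of (8)--(9), the partial isometries $s_Is_J^*$ act as weighted composition operators, and one shows that they move $\zeta_\omega$ only within the algebra of past-measurable functions, mirroring the Bernoulli computation in which $\pi_\omega(|e_i><e_j|^{(k)})\zeta_\omega$ alters only the $k$-th coordinate. Here I expect the factor/extremality hypotheses (through ergodicity of $(\clm,\tau,\phi)$) to enter. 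Establishing this past-measurability, hence $E_{0]}\le E^c_{0]}$, is the crux; once in hand the converse, and with it the whole of (b), follows.
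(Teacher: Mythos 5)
Your part (a) and the forward half of (b) track the paper's own proof almost exactly: monicity gives future-diagonal cyclicity, shift covariance globalizes it over $\IZ$, and the inclusion $E^c_{n]} \le E_{n]}$ settles the easy implication. The genuine gap is precisely the point you label ``the crux'' and then leave open: you never prove $E_{0]} \le E^c_{0]}$, i.e.\ the past cyclicity $[\pi_{\omega}(\clb_L)''\zeta_{\omega}]=[\pi_{\omega}(D_e^{\IZ_-})''\zeta_{\omega}]$, so your proposal establishes only one direction of (b). Moreover, the route you sketch is set up on the wrong space: the monic system of [DJ] realizes the GNS space of $\psi$ as $L^2(\cld^{\IZ_+},\omega_{\mu})$, the \emph{future} one-sided space, on which the weighted composition operators $s_Is_J^*$ act only through coordinates $\ge 1$; the past matrix units $\pi_{\omega}(|e_i><e_j|^{(k)})$, $k \le 0$, are not represented there at all, so no computation inside the monic system, as it stands, can certify past-measurability of $\pi_{\omega}(\clb_L)\zeta_{\omega}$ in the two-sided space $\clh_{\omega}$. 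Your expectation that ergodicity of $(\clm,\tau,\phi)$ supplies the missing input is also not how the hypotheses are used; extremality and factoriality enter through purity and Haag duality, which you already invoke for the identification $E_{n]}=[\pi_{\omega}(\theta^n(\clb_L))''\zeta_{\omega}]$.

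The paper closes the gap with a short time-reversal trick that your proposal is missing. It forms the dual state $\tilde{\psi}$ on $\tilde{\clo}_d$ defined by $\tilde{\psi}(\tilde{s}_I\tilde{s}_J^*)=\psi(s_{\tilde{I}}s_{\tilde{J}}^*)$ (words reversed), observes that $\pi_{\tilde{\psi}}(\tilde{s}_I\tilde{s}_J^*)\zeta_{\tilde{\psi}} \raro \pi_{\psi}(s_{\tilde{I}}s_{\tilde{J}}^*)\zeta_{\psi}$ extends to a unitary, so that $\tilde{\psi}$ is monic whenever $\psi$ is, and then applies part (a) to the dual; this yields $[\pi_{\omega}(\clb_L)''\zeta_{\omega}]=[\pi_{\omega}(D_e^{\IZ_-})''\zeta_{\omega}]$ at once. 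Combined with Haag duality $[\pi_{\omega}(\clb_R)'\zeta_{\omega}]=[\pi_{\omega}(\clb_L)''\zeta_{\omega}]$, this gives $E_{n]}=E^c_{n]}$ for every $n$, after which both implications of (b) coincide, exactly as you predicted they would. So the single missing idea is that monicity is stable under the left--right word-reversal duality of [Mo5], which converts your one-sided future information into the needed past cyclicity; without it, or some substitute argument, your proof of the converse direction does not go through.
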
 

\vsp 
\begin{proof} 
Since $\psi$ is monic, we have 
$[\pi_{\omega}(\IM_R)''\zeta_{\omega}]=[\pi_{\omega}(D^e_R)''\zeta_{\omega}]$ and $\omega$ being translation invariant we also have 
$[\pi_{\omega}(\theta^n(\IM_R))''\zeta_{\omega}]=[\pi_{\omega}\theta^n((\ID^e_{R}))''\zeta_{\omega}]$
Thus $[\pi_{\omega}(\ID^e)''\zeta_{\omega}]=\clh_{\omega}$ i.e (a) is true.  

\vsp 
The unit vector $\zeta_{\omega}$ being cyclic for $\pi_{\omega}(\ID^e)''$, $\pi_{\omega}(\ID^e)''$ is maximal abelian i.e. $\pi_{\omega}(\ID^e)''=\pi_{\omega}(\ID^e)'$ by Theorem 2.3.4 in [SS]. So for any element $X \in \pi_{\omega}(\IM)'$, we have in particular, $X \in \pi_{\omega}(\ID^e)'$ i.e. by maximal abelian property $X \in 
\pi_{\omega}(\ID^e)''$. Thus $X \in \pi_{\omega}(\IM)''$ and so $X$ is an element in the centre of $\pi_{\omega}(\IM)''$. By our hypothesis that $\omega$ is a factor state, we conclude $X$ is a scaler. This completes the proof for (b).  

\vsp 
For (c) we consider the dual state $\tilde{\psi}$ [Mo5] on $\tilde{\clo}_d$ defined by 
$$\tilde{\psi}(\tilde{s}_I\tilde{s}_J^*)=\psi(s_{\tilde{I}}s_{\tilde{J}}^*),$$ 
where $\tilde{I}=(i_n,i_{n_1},..i_1)$ if $I=(i_1,i_2,..,i_n)$. The map 
$$U: \pi_{\tilde{\psi}}(\tilde{s}_I\tilde{s}_j)\zeta_{\tilde{\psi}} \raro \pi_{\psi}(s_{\tilde{I}}s_{\tilde{J}})\zeta_{\psi}$$ 
extends to a unitary operator. It clearly shows that $\tilde{\psi}$ is monic if $\psi$ is so. Thus for a monic $\psi$, we have by (a) 
$$[\pi_{\omega}(\IM_L)''\zeta_{\omega}]=[\pi_{\omega}(\ID^e_L)''\zeta_{\omega}]$$

\vsp 
The state $\omega$ being pure by (b) and translation invariant, we have Haag duality property [Mo5] i.e. 
$$[\pi_{\omega}(\IM_R)'\zeta_{\omega}]= [\pi_{\omega}(\IM_L)''\zeta_{\omega}]$$

\vsp 
Thus 
$$E_n=[\pi_{\omega}(\theta^n(\IM_R))'\zeta_{\omega}]$$
$$=[\pi_{\omega}(\theta^n(\IM_L))''\zeta_{\omega}]$$
$$=[\pi_{\omega}(\theta^n(\ID^e_L)\zeta_{\omega}]$$ 
for each $n \in \IZ$. This shows $E_n \raro |\zeta_{\omega}\rangle\langle\zeta_{\omega}|$ as $n \raro -\infty$ if $\clf_n=[\pi_{\omega}(\theta^n(\ID^e_L)\zeta_{\omega}] \raro 
|\zeta_{\omega}\rangle\langle\zeta_{\omega}|$ as $n \raro -\infty$. 
\end{proof} 

\vsp 
For a given translation invariant Markov state $\omega_{\mu,p}$ on $C(\Omega^{\IZ})$ 
we consider the non-empty weak$^*$ compact convex set $\cls_{\omega_{\mu,p}}$ 
of translation invariant states of $\IM$ whose restriction on $C(\Omega^{\IZ})$ 
is $\omega_{\mu,p}$. While dealing with the isomorphism problem for states in $\cls_{\omega_{\mu,p}}$ we will restrict ourselves to $\mu,p$ that satisfies: 
\be 
\mu_i > 0,\;\;p^i_j > 0,\;\; \forall i,j \in \Omega
\ee
This additional condition on $\mu$ and $p$ 
ensures the following result: 

\vsp 
\begin{pro} 
Let $\omega_c=\omega_{\mu,p}$ with $\mu_i > 0$ and $p^i_j > 0$ for all $1 \le i,j \le d$ 
be a stationary Markov state on $C(\Omega^{\IZ})$ which is identified with maximal abelian $C^*$ sub-algebra $\ID^e$ of $\IM$. Then there exists a translation invariant pure state $\omega$ of $\IM$ extending $\omega_c$ such that any extremal element $\psi \in K_{\omega}$ is a monic state of $\clo_d$ and restriction of $\omega$ to $\IM_R$ 
is also pure. 
\end{pro}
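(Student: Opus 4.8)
The plan is to produce the required extension $\omega$ through a monic representation of $\clo_d$ built directly from the Markov data, and then to read off purity, the Kolmogorov property, and the structure of $K_\omega$ from the maximal abelian diagonal. First I would write down the one-sided Markov measure $\omega_{\mu,p}$ on $\cld_d^{\IZ_+}\equiv D_e^{\IZ_+}$ attached to $(\mu,p)$ and compute the Radon--Nikodym cocycle of its push-forwards under the branch maps $\sigma_i:w\mapsto iw$. A direct computation on cylinder sets gives
\be
\frac{d\omega_{\mu,p\circ\sigma_i}}{d\omega_{\mu,p}}(w)=\frac{\mu_{w_1}}{\mu_{w_0}\,p^{w_0}_{w_1}}\,\chi_{\{w_0=i\}}(w),
\ee
which, by the strict positivity hypothesis (10), is bounded away from $0$ and $\infty$ on $\sigma_i(\cld_d^{\IZ_+})$. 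Hence (9) holds with $f_i$ the non-negative square root, so we obtain a non-negative monic system in the sense of [DJ]; its canonical Cuntz representation on $L_2(\cld_d^{\IZ_+},\omega_{\mu,p})$, with the constant function $1$ as cyclic vector, gives a monic state $\psi$ whose diagonal measure on $D_e^{\IZ_+}$ is exactly $\omega_{\mu,p}$. Its invariance $\psi\lambda=\psi$ is the stationarity $\sum_i\mu_ip^i_j=\mu_j$ transcribed on $\clo_d$.

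Next I would pass to $\clb$. Setting $\omega_R=\psi_{|\mbox{UHF}_d}$ under the fixed identification $\mbox{UHF}_d\equiv\clb_R$, the relation $\psi\lambda=\psi$ makes $\omega_R$ a $\theta_R$-invariant state of $\clb_R$, which extends by translation invariance to a unique state $\omega$ of $\clb$ (consistency of $\omega_R\circ\theta_R^n$ on $\theta^{-n}(\clb_R)$). Since the one-sided diagonal measure is $\omega_{\mu,p}$ and the stationary two-sided extension of a Markov measure is again the Markov measure, $\omega$ restricts to $\omega_c=\omega_{\mu,p}$ on $D_e^{\IZ}$, so $\omega\in\cls_{\omega_c}$. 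Monicity of $\psi$ gives $[\pi_\omega(\clb_R)''\zeta_\omega]=[\pi_\omega(D_e^{\IZ_+})''\zeta_\omega]$, and translation invariance then yields cyclicity of $\zeta_\omega$ for $\pi_\omega(D_e^{\IZ})''$, exactly as in Proposition 5.1(a).

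I then address factoriality and the two-sided statements together. Using the cyclic maximal abelian diagonal I identify $\clh_\omega\cong L_2(\cld_d^{\IZ},\omega_{\mu,p})$ with $S_\omega$ the Koopman operator of the shift. Since $p^i_j>0$ the finite-state chain is mixing, hence ergodic with trivial tail field; ergodicity forces $\ker(S_\omega-I)=\IC\zeta_\omega$, so $\omega$ is an extremal translation invariant state and therefore a factor state by the asymptotic abelian property of $\theta$. Now the compression argument in the proof of Proposition 4.8 applies verbatim: $\pi_\omega(D_e^{\IZ})''$ being maximal abelian and $\omega$ a factor, any $E\in\pi_\omega(\clb)'$ lies in $\pi_\omega(D_e^{\IZ})'=\pi_\omega(D_e^{\IZ})''\seq\pi_\omega(\clb)''$, hence is central and scalar, so $\omega$ is pure; triviality of the classical tail field together with Proposition 5.1(b) shows $\omega$ is Kolmogorov. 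For $K_\omega$, the monic representation is irreducible by ergodicity of the iterated function system (the $f_i$ being strictly positive), so $\psi$ is pure and a fortiori extremal in $K_\omega$; by Lemma 7.4 in [BJKW] every extremal element of $K_\omega$ equals $\psi\beta_{zI_d}$ with $z\in S^1$, and since $\beta_{zI_d}$ fixes every $s_Is_I^*$ and intertwines the GNS data on the diagonal, monicity is gauge-invariant, so every extremal element of $K_\omega$ is monic.

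Finally, the remaining and hardest point is purity of $\omega_{|\clb_R}$. I would run the half-sided analogue of the above compression argument inside $\clh_{\omega_R}=[\pi_\omega(\clb_R)\zeta_\omega]\cong L_2(\cld_d^{\IZ_+},\omega_{\mu,p})$: half-sided monicity makes $\zeta_\omega$ cyclic for the maximal abelian algebra $\pi_{\omega_R}(D_e^{\IZ_+})''$, so every central element of $\pi_{\omega_R}(\clb_R)''$ already lies in the diagonal. The crux is then that $\omega_R$ is itself a factor state, i.e. that this central diagonal algebra is trivial; unlike the two-sided case this does \emph{not} follow from mere ergodicity of the shift, but requires the transfer operator of the chain to have $1$ as a simple isolated eigenvalue with no other peripheral spectrum. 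This spectral gap is precisely what the strict positivity $p^i_j>0$ in (10) supplies by Perron--Frobenius, and carefully controlling the one-sided center rather than the two-sided one is the step I expect to be the main obstacle. Granting it, factoriality of $\omega_R$ makes the central diagonal scalar, hence $\pi_{\omega_R}(\clb_R)'=\IC$, so $\omega_R$ is pure.
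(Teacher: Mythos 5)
Your construction is, up to notation, exactly the paper's: you build the same [DJ] monic representation of $\clo_d$ on $L^2(\cld_d^{\IZ_+},\omega_{\mu,p})$ with the weight functions $f_j(x_1,x_2,\dots)=\delta^j_{x_1}\bigl(\mu_{x_2}/\mu_j p^j_{x_2}\bigr)^{1/2}$ (your Radon--Nikodym computation matches this after reindexing), take $\psi$ given by the constant cyclic vector, restrict to $\mbox{UHF}_d\equiv\clb_R$, extend to the unique translation invariant $\omega$, and propagate monicity to all extremal points of $K_\omega$ via the gauge orbit $\psi'=\psi\circ\beta_z$ from [BJKW]. Where you diverge is in the logical order and in what is proved versus cited: the paper gets purity of $\psi$ (hence of $\omega_R$) at one stroke from Theorem 3.6 of [DJ] and then deduces purity of $\omega$ from purity of $\omega_R$ via Theorem 2.6 of [Mo6], whereas you argue purity of $\omega$ directly (cyclic maximal abelian diagonal plus ergodicity of the mixing chain plus asymptotic abelianness, i.e.\ the compression argument of Proposition 4.8) and then attack purity of $\omega_R$ as a separate, second problem. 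Your two-sided argument is sound and is a legitimate alternative to the [Mo6] citation; your remark that irreducibility of the monic representation follows ``by ergodicity of the iterated function system'' is, however, not a proof --- that irreducibility is precisely the content of [DJ, Theorem 3.6], so at that point you are implicitly re-citing what the paper cites.

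The genuine gap is the last step, which you yourself flag: you reduce purity of $\omega_R$ to factoriality of $\pi_{\omega_R}(\clb_R)''$ and invoke a Perron--Frobenius spectral gap for the transfer operator, but you never connect the spectral gap to triviality of the one-sided center, and as written the proof is conditional (``Granting it\dots''). This is exactly the point the paper disposes of by citing [DJ, Theorem 3.6], so your ``main obstacle'' is not an incidental detail but the heart of the proposition. Note that your own reduction can be closed without spectral-gap language and without treating factoriality as a separate input: since $\zeta_\omega$ is cyclic for $\pi_{\omega_R}(D_e^{\IZ_+})''$, this algebra is maximal abelian in $\clb(\clh_{\omega_R})$, so $\pi_{\omega_R}(\clb_R)'\subseteq \pi_{\omega_R}(D_e^{\IZ_+})''$, i.e.\ the commutant consists of multiplication operators $m_g$; commutation with the partial isometries $\pi_{\omega_R}(s_Is_J^*)$, $|I|=|J|$, forces $g$ to be invariant under changes of finitely many initial coordinates, hence measurable with respect to the tail $\sigma$-field of the one-sided chain, and the tail field is trivial because condition (10) makes the chain mixing. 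This identifies $\pi_{\omega_R}(\clb_R)'$ with the scalars directly (commutant $=$ center $=\IC$), yielding purity of $\omega_R$ in one stroke; until some such argument is supplied, your proposal proves a strictly weaker statement than the proposition.
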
 

\vsp 
\begin{proof} 
We consider the monic representation $\pi:\clo_d \raro \clb(L^2(\Omega^{R},\clf_R, d\omega^R_c))$, which is define by 
$$\pi(s_j)f=f_j f \circ \sigma,$$
where 
$$f_j(x_1,x_2....)=\delta^j_{x_1}[{\mu_{x_2} \over \mu_jp^j_{x_2}}]^{1 \over 2}$$ 
associated with $\omega^R_c$, the restriction of $\omega_c$ to $C(\Omega^R)$,  
in Lemma 3.3 [DJ] and $\sigma$ is the left shift defined by $$\sigma((x_1,x_2,..)=(x_2,x_3,..)$$ 
We set $\lambda$-invariant state of $\clo_d$ by 
$$\psi(s_Is_J^*)=\langle \zeta_{\psi},\pi(s_Is_J^*)\zeta_{\psi}\rangle$$ 
where $\zeta_{\psi}$ is the constant function identically equal to $1$. Theorem 3.6 in [DJ] says that $\psi$ is a pure state of $\clo_d$ and Corollary 3.3 in [DJ] says that $\psi$ is also a monic state.  

\vsp 
Let $\omega$ be the unique translation invariant state $\omega$ of $\IM$ for which $\omega_R$ is equal to the restriction of $\psi$ to $\mbox{UHF}_d$ algebra 
$\{s_Is_J^*:|I|,|J| < \infty \}$. That $\omega$ is pure follows 
by Theorem 2.6 in [Mo4] since $\omega_R$ is pure ( in particular type-I factor state ).  

\vsp 
Since any other extremal element $\psi'$ in $K_{\omega}$ is given by $\psi'=\psi \circ \beta_z$ for some $z \in S^1$ [BJKW, Proposition 7.6], we conclude $\psi'$ is also a monic state of $\clo_d$. 
\end{proof}

\begin{cor} 
Let $\omega_{\mu,p}$ and $\omega$ be as in Proposition 8.2. Then the following statement hold:

\NI (a) $\omega_{\mu,p}$ is Kolmogorov; 

\NI (b) $\zeta_{\omega}$ is cyclic for $\pi_{\omega}(\ID^e)''$ in $\clh_{\omega}$; 
 
\NI (c) $\omega$ is pure and Kolmogorov. 
  
\end{cor}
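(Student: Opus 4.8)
The plan is to obtain the three assertions by combining Proposition 5.2, the general monic criterion of Proposition 5.1, and one classical fact about finite-state Markov chains. None of the three steps requires fresh machinery, so the corollary is essentially an assembly of pieces already in place; I would therefore not treat them in the listed order but in the order in which the dependencies unfold.

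First I would dispose of (b), which is the most immediate. Proposition 5.2 produces a translation invariant \emph{pure} state $\omega$ of $\clb$ extending $\omega_c=\omega_{\mu,p}$, and asserts that every extremal element $\psi\in K_{\omega}$ is a monic state of $\clo_d$. A pure state is in particular a factor state, so the hypotheses of Proposition 5.1 are met with this $\omega$ and any such $\psi$. Proposition 5.1(a) then gives directly that $\zeta_{\omega}$ is cyclic for $\pi_{\omega}(\cld_e^{\IZ})''$ in $\clh_{\omega}$, which is exactly (b).

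Next I would prove (a), the only genuinely classical point. The standing hypothesis $\mu_i>0$ and $p^i_j>0$ for all $i,j\in\cld$ makes the transition matrix $p=(p^i_j)$ strictly positive, hence primitive (irreducible and aperiodic). By the Perron--Frobenius theorem the iterates $p^n$ converge, exponentially fast, to the rank-one stochastic matrix all of whose rows equal the stationary vector $\mu$. This is precisely the mixing property of the chain, and for a finite-state stationary Markov chain mixing is equivalent to triviality of the tail $\sigma$-field, i.e. $\bigcap_{n\in\IZ}\clf_{n]}=\{\cld_d^{\IZ},\emptyset\}$ [Pa]. Equivalently one may invoke Doeblin's condition, which strict positivity supplies, and reach the same conclusion by a coupling argument. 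Thus $\omega_{\mu,p}$ is Kolmogorov in the classical sense, which is (a).

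Finally (c) follows by feeding (a) into Proposition 5.1(b). The restriction of $\omega$ to the maximal abelian subalgebra $D_e^{\IZ}$ is by construction $\omega_c=\omega_{\mu,p}$, which is classically Kolmogorov by (a); Proposition 5.1(b) states that $\omega$ is Kolmogorov if and only if this restriction is classically Kolmogorov, so $\omega$ is Kolmogorov. The one point demanding care is that the classical tail triviality established in (a) matches exactly the projection convergence $\clf_n\downarrow|\zeta_{\omega}><\zeta_{\omega}|$ encoded in Proposition 5.1(b); but that identification is already carried out inside the proof of Proposition 5.1(b), so no separate argument is needed here. The substance of the corollary therefore resides entirely in step (a), and even there the work is classical.
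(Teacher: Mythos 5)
Your proposal is correct, and for parts (b) and (c) it is exactly the paper's argument: the paper likewise disposes of both in one line by noting that every extremal $\psi\in K_{\omega}$ is monic (Proposition 5.2) and then invoking Proposition 5.1(a) for cyclicity and Proposition 5.1(b), fed with (a), for the Kolmogorov property of $\omega$. Where you genuinely diverge is part (a). The paper argues via entropy: it quotes the formula $h_{\omega_{\mu,p}}(\theta)=-\sum_{i,j}\mu_i p^i_j \ln(p^i_j)$, observes this is strictly positive under condition (10), and appeals to a ``Sinai--Rokhlin'' equivalence between the Kolmogorov property and strict positivity of dynamical entropy. You instead use strict positivity of $p$ to get primitivity, Perron--Frobenius convergence $p^n\to \mathbf{1}\mu$, hence mixing, and then the classical fact (alternatively Doeblin's condition and coupling) that for finite-state stationary Markov chains mixing yields triviality of the remote-past tail $\bigcap_n \clf_{n]}$. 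Your route is arguably the safer one: the equivalence as stated in the paper is a misquotation of the Rokhlin--Sinai theorem, which characterizes K-systems by \emph{completely} positive entropy, not by positivity of the entropy of the shift alone --- indeed an irreducible but periodic Markov chain can have positive entropy while failing to be mixing, hence failing to be Kolmogorov, so the paper's criterion needs condition (10) anyway to be salvaged (e.g.\ through aperiodicity, exactly the input your argument uses directly). What the paper's route buys is brevity, since the entropy formula is standard; what yours buys is a self-contained and unimpeachable proof of (a) in which the role of hypothesis (10) is transparent. One small point to keep in mind: the tail relevant to Proposition 5.1(b) is the remote past $n\to-\infty$, so your mixing argument should formally be applied to the time-reversed chain, which under (10) is again a Markov chain with strictly positive transitions, so nothing is lost.
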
 

\vsp 
\begin{proof} 
A classical state is Kolmogorov if and only if dynamical entropy of $(\ID^e,\theta,\omega_{\mu,p})$ is strictly positive by Rokhline-Sinai Theorem [Pa]. We may recall dynamical entropy [Pa] of a stationary Markov state $\omega_{\mu,p}$ is 
$h_{\omega_{\mu,p}}(\theta)= -\sum_{i,j} \mu_i p^i_j ln(p^i_j)$ which is strictly positive to complete proof for (a). 

\vsp 
Now (b) and (c) follows by Proposition 8.1 since any extremal element $\psi$ in $K_{\omega}$ is monic by Proposition 8.2.   
\end{proof} 

\vsp 
We have the following result as an application of Theorem 1.2 and Proposition 8.2

\vsp 
\begin{thm} 
Let $\omega_{\mu,p}$ and $\omega_{\mu',p'}$ be two isomorphic 
stationary Kolmogorov states on $C(\cld_d^{\IZ})$ satisfying condition (40). Then there exists two Kolmogorov states $\omega \in \cls_{\omega_{\mu,p}}$ and $\omega' \in \cls_{\omega_{\mu',p'}}$ respectively 
giving isomorphic translation dynamics on $\IM$.
\end{thm}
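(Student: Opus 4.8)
The plan is to chain together the machinery developed in Sections 4 and 5. The two stationary Markov states $\omega_{\mu,p}$ and $\omega_{\mu',p'}$ on $C(\cld_d^{\IZ})$ are assumed isomorphic as classical dynamical systems; that is, there is an automorphism $\beta_0: D_e^{\IZ} \raro D_e^{\IZ}$ commuting with $\theta$ such that $\omega_{\mu',p'} = \omega_{\mu,p} \circ \beta_0$. By Proposition 5.2, applied to $\omega_{\mu,p}$, there exists a translation invariant pure state $\omega$ of $\clb$ extending $\omega_{\mu,p}$ whose extremal elements $\psi \in K_{\omega}$ are monic states of $\clo_d$. By Corollary 5.3 parts (b) and (c), this $\omega$ is Kolmogorov and $\zeta_{\omega}$ is cyclic for $\pi_{\omega}(D_e^{\IZ})''$ in $\clh_{\omega}$. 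Thus $\omega$ lands precisely in the hypothesis class of Theorem 1.2.

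First I would produce the partner state $\omega'$. The natural candidate is $\omega' = \omega \circ \beta$, where $\beta: \clb \raro \clb$ is an automorphism commuting with $\theta$ extending $\beta_0$; such an extension exists by Corollary 4.7. Because $\beta(D_e^{\IZ}) = D_e^{\IZ}$ and $\beta$ extends $\beta_0$, the restriction of $\omega'$ to $D_e^{\IZ}$ equals $\omega_{\mu,p} \circ \beta_0 = \omega_{\mu',p'}$, so $\omega' \in \cls_{\omega_{\mu',p'}}$. Since $\beta$ is an automorphism commuting with $\theta$, the pure, translation-invariant, Kolmogorov properties of $\omega$ transfer verbatim to $\omega'$; in particular $\zeta_{\omega'}$ is cyclic for $\pi_{\omega'}(D_e^{\IZ})''$ because $\beta$ preserves $D_e^{\IZ}$, as already noted in the proof of Corollary 4.9. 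Hence $\omega'$ is a Kolmogorov state in $\cls_{\omega_{\mu',p'}}$.

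It remains to see that $(\clb,\theta,\omega)$ and $(\clb,\theta,\omega')$ are isomorphic translation dynamics, and this is exactly what Theorem 1.2 delivers: the two pure translation invariant states share cyclic vectors for $\pi(D_e^{\IZ})''$, and their restrictions to $D_e^{\IZ}$ are isomorphic as classical dynamics via $\beta_0$. The intertwining automorphism is $\beta \circ \beta_{\ul{z}}$ for a suitable $\ul{z} \in (S^1)^d$, exactly as produced in the proof of Theorem 1.2 through Proposition 4.9. So the theorem follows by applying Theorem 1.2 to the pair $(\omega, \omega')$.

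The main obstacle is the gauge ambiguity. The extension $\beta$ of $\beta_0$ is only determined up to a diagonal gauge automorphism $\beta_{\ul{z}}$ (Corollary 4.7), and correspondingly the unitary implementing the intertwining of the full dynamics is pinned down only after absorbing such a $\beta_{\ul{z}}$; this is precisely the role of the second part of Proposition 4.9, which guarantees $\omega' = \omega \circ \beta \circ \beta_{\ul{z}}$ for some $\ul{z}$, and it is the place where condition (10) on strict positivity of $\mu$ and $p$ is silently used, since it is what made the extremal elements of $K_{\omega}$ monic in Proposition 5.2. Verifying that the chosen $\omega'$ genuinely has all the Theorem 1.2 hypotheses, rather than merely extending $\omega_{\mu',p'}$, is the step demanding care.
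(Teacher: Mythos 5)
Your construction of $\omega'$ deviates from the paper's, and the deviation opens a genuine gap. The paper proves Theorem 5.4 by applying Proposition 5.2 \emph{twice}, once to $\omega_{\mu,p}$ and once to $\omega_{\mu',p'}$ (this is why condition (10) is assumed for both Markov states), so that Corollary 5.3 certifies the Kolmogorov property and the cyclicity of $\zeta_{\omega'}$ for $\pi_{\omega'}(D_e^{\IZ})''$ for $\omega'$ \emph{on its own}; Theorem 1.2 then does the real work of producing an intertwining automorphism between two states that are not a priori related in any way. You instead set $\omega'=\omega\circ\beta$, where $\beta$ is the Corollary 4.7 extension of $\beta_0$. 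With that choice your appeal to Theorem 1.2 is redundant: $\beta^{-1}$ commutes with $\theta$ and already intertwines $(\clb,\theta,\omega)$ and $(\clb,\theta,\omega')$ by construction, so the isomorphism of the dynamics is tautological. All of the difficulty has been shifted into your unsupported sentence that ``the pure, translation-invariant, Kolmogorov properties of $\omega$ transfer verbatim to $\omega'$.'' Translation invariance and purity do transfer, but the Kolmogorov property is defined through the tail of the filtration $E_{n]}=[\pi_{\omega}(\theta^n(\clb_R))'\zeta_{\omega}]$, and $\beta$ has no reason to respect $\clb_R$: $\beta_0$ comes from an Ornstein isomorphism and is generically non-local, and by Theorem 3.7 any automorphism commuting with $\theta$ that preserves $\clb_{loc}$ is of the trivial form $\theta^k\beta_g$, which a nontrivial $\beta_0$ is not. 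Under the GNS identification $\pi_{\omega'}=\pi_{\omega}\circ\beta$, $\zeta_{\omega'}=\zeta_{\omega}$, the filtration attached to $\omega'$ is $[\pi_{\omega}(\theta^n(\beta(\clb_R)))'\zeta_{\omega}]$, whose tail is not controlled by the Kolmogorov property of $\omega$. Since the paper's Appendix B exhibits a translation-invariant \emph{pure} state failing the Kolmogorov property, purity of $\omega'$ (which you do get) cannot substitute, and nothing in the paper establishes that the Kolmogorov property is invariant under non-local automorphisms commuting with $\theta$.

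Nor can you rescue the step through Proposition 5.1(b): that criterion requires an extremal element of $K_{\omega'}$ to be monic, and monicity was established in Proposition 5.2 only for the state built from the Dutkay--Jorgensen monic representation of $\omega_{\mu',p'}$, not for a pushforward $\omega\circ\beta$; likewise the needed identity $[\pi_{\omega'}(\clb_L)''\zeta_{\omega'}]=[\pi_{\omega'}(D_e^{\IZ_-})''\zeta_{\omega'}]$ would read $[\pi_{\omega}(\beta(\clb_L))''\zeta_{\omega}]=[\pi_{\omega}(\beta_0(D_e^{\IZ_-}))''\zeta_{\omega}]$, which does not follow because $\beta_0$ need not preserve $D_e^{\IZ_-}$. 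The repair is exactly the paper's route: obtain $\omega'\in\cls_{\omega_{\mu',p'}}$ directly from Proposition 5.2 applied to $\omega_{\mu',p'}$, invoke Corollary 5.3 for its Kolmogorov property and the cyclicity hypothesis, and only then apply Theorem 1.2 to the pair $(\omega,\omega')$, whose classical restrictions are isomorphic by assumption. Your closing remarks about the gauge ambiguity $\beta_{\ul{z}}$ and the role of condition (10) are broadly on target, except that (10) is used in Proposition 5.2 (positivity of $\mu_j p^j_k$ in the Radon--Nikodym functions $f_j$) rather than ``silently'' in the gauge step.
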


\vsp 
Theorem 1.3 and Theorem 8.4 puts some more relevant future questions on general mathematical structure of a Kolmogorov state of $\IM$. Since mean entropy of 
a pure product state $\omega$ of $\IM$ is zero and for a given 
$\omega_{\mu,p}$ of $\ID^e$ we may fix a pure product state $\omega$ of $\IM$ such that the Kolmogorov-Sinai entropies of $\omega_c$ and $\omega_{\mu,p}$ equal, 
we could prove $s(\omega')$ is zero for any Kolmogorov state $\omega'$ of $\IM$ 
that extends $\omega_{\mu,p}$ provided we had known that the mean entropy is an 
invariant for translation dynamics i.e. does not change under isomorphism. On the contrary $s(\omega') \neq 0$ would have proved that the mean entropy is not an invariant for translation dynamics. 

\vsp 
In complete mathematical generality for two Kolmogorov states $\omega$ and $\omega'$ of $\IM$ The following question remains to be answered.  

\vsp  
\NI Q. Given a Kolmogorov state $\omega$ of $\IM$, does it admit a translation invariant maximal abelian $C^*$-sub algebra $\ID$ of $\IM$ and an automorphism $\alpha$ on $\IM$ satisfying the following statements: 

\vsp 
\NI (a) $\ID=\alpha(\ID^e)$ and $\alpha \theta = \theta \alpha$;

\vsp 
\NI (b) $\omega \alpha = \omega_{\mu,p}$ on $\ID^e$ for some stationary Markov state $\omega_{\mu,p}$.  

\vsp 
The main interesting point here, a translation invariant state $\omega_c$ of $C(\Omega^{\IZ})$ need not be a classical Markov state $\omega_{\mu,p}$ [Or2] as the dynamics $(C(\Omega^{\IZ}),\theta,\omega_c)$ may not admit a square root  i.e. another automorphism $\beta$ on $C(\Omega^{\IZ})$ such that $\beta^2=\theta$ and $\omega_c \beta =\omega_c$. Nevertheless a Kolmogorov state $\omega$ of $\IM$ extending $\omega_c$ may admit a square root i.e. an automorphism $\beta:\IM \raro \IM$ with $\theta=\beta^2$, need not keep the same maximal abelian $C^*$-subalgebra $\ID^e$ invariant.

\section{ Appendix A } 

\vsp 
\begin{thm} 
Let $\pi$ and $\pi_0$ be two unital representations of $\IM$ in $\clb(\clh)$ with $\clh$ separable satisfying the following conditions:   

\NI (a) $\pi(\IM)''= \pi_0(\IM)''=\clb(\clh)$;  

\NI (b) $\pi(x)=\pi_0(x)$ for all $x \in \ID^e$. 

\vsp 
Then there exists an automorphism $\beta$ on $\IM$ such that $\pi_0 = \pi \beta$ for which $\beta(x)=x$ for all $x \in \ID^e$.  
\end{thm}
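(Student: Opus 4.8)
The plan is to reduce the theorem to the purely spatial assertion that the two concrete $C^*$-algebras coincide, $\pi(\clb)=\pi'(\clb)$ inside $\clb(\clh)$, and then to prove this coincidence by the matrix-unit matching argument of [Pow], carried out relative to the common diagonal. First I would note that $\clb$ is simple and $\pi,\pi'$ are unital, so both representations are faithful; consequently, once one knows $\pi'(\clb)\subseteq\pi(\clb)$, the map $\alpha:=\pi^{-1}\circ\pi'$ is a well-defined unital injective $*$-endomorphism of $\clb$ with $\pi'=\pi\circ\alpha$, and by hypothesis (b) it satisfies $\alpha(x)=\pi^{-1}(\pi'(x))=\pi^{-1}(\pi(x))=x$ for all $x\in D_e^{\IZ}$, so the diagonal is fixed automatically. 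Exchanging the roles of $\pi$ and $\pi'$ gives the reverse inclusion, hence $\pi(\clb)=\pi'(\clb)$ and $\alpha$ is onto. Thus the whole statement follows once I show $\pi'(\clb)\subseteq\pi(\clb)$, equivalently that each standard matrix unit image $\pi'(e^i_j(k))$ (where $e^i_j(k)=|e_i\rangle\langle e_j|^{(k)}$) lies in the norm-closed algebra $\pi(\clb)$ and not merely in its weak closure $\pi(\clb)''=\clb(\clh)$.

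To produce these elements I would run the construction behind Theorem 3.7 and Corollary 3.8 of [Pow], adapted to the shared maximal abelian subalgebra $\clc:=\pi(D_e^{\IZ})=\pi'(D_e^{\IZ})$, which coincides by (b). Fix an increasing sequence of finite intervals $\Lambda_1\subset\Lambda_2\subset\cdots\uparrow\IZ$. For each $n$, both $\pi(\clb_{\Lambda_n})$ and $\pi'(\clb_{\Lambda_n})$ are type-$I_{d^{|\Lambda_n|}}$ subfactors of $\clb(\clh)$ that contain the common local diagonal $\pi(D_{\Lambda_n})=\pi'(D_{\Lambda_n})$, and the diagonal projections $p^i(k):=\pi(e^i_i(k))=\pi'(e^i_i(k))$ agree on the nose. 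The inductive step matches the two local systems of matrix units: using Kaplansky density to approximate the target partial isometries $\pi'(e^i_j(k))$ by elements of $\pi(\clb)$, and then the perturbation lemma (Lemma 3.2 of [Pow]) to turn the approximation into an exact conjugacy, I would produce a unitary $u_n\in\pi(\clb)$ with $\|u_n-1\|$ as small as desired, carrying the stage-$n$ matrix units toward the $\pi'(e^i_j(k))$ while respecting the shared diagonal; choosing the errors so that $\sum_n\|u_n-1\|<\infty$ makes the telescoping products $u_n\cdots u_1$ converge in norm.

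The limit then yields, for each $i,j,k$, a norm-limit of elements of $\pi(\clb)$ equal to $\pi'(e^i_j(k))$, so $\pi'(e^i_j(k))\in\pi(\clb)$, which is exactly the inclusion needed in the first step. Here Lemmas 4.1 and 4.2 are what make the scheme effective: $D_e^{\IZ}$ is maximal abelian, so the relative commutants that appear reduce to the local diagonals, and every projection of $\clc$ is local, so the matching can be organized block by block with the diagonal frozen throughout. Combining the inclusion with its symmetric counterpart gives $\pi(\clb)=\pi'(\clb)$ and completes the proof.

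I expect the principal obstacle to be precisely the upgrade from approximate to exact: the target operators $\pi'(e^i_j(k))$ are only guaranteed to lie in the von Neumann closure $\clb(\clh)$, and Kaplansky density alone gives strong, not norm, approximation. The entire force of the argument is in choosing the correcting unitaries $u_n$ inside the $C^*$-algebra $\pi(\clb)$, compatibly with the common Cartan $\clc$ and with summable defects, so that the infinite product converges in norm and the limiting matrix units are \emph{equal to}, not merely approximately unitarily equivalent to, the operators $\pi'(e^i_j(k))$. This is the step where the reasoning for Corollary 3.8 of [Pow] must be refined using the special feature, emphasized after Proposition 4.9, that $\pi(\clb)$ and $\pi'(\clb)$ share the subalgebra $\clc$.
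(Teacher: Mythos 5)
Your opening reduction is correct and is, in effect, a restatement of the theorem: since $\clb$ is simple, both representations are faithful, so $\pi'=\pi\alpha$ for an automorphism $\alpha$ is equivalent to the equality of image algebras $\pi(\clb)=\pi'(\clb)$, and the diagonal-fixing property of $\alpha$ is then automatic from hypothesis (b). The genuine gap is in the step that was supposed to prove the inclusion $\pi'(e^i_j(k))\in\pi(\clb)$. Because $\pi(\clb)$ is norm closed, this membership is \emph{equivalent} to norm-approximability of $\pi'(e^i_j(k))$ from $\pi(\clb)$; but the only approximation tool you invoke, Kaplansky density, gives strong-operator approximation, and the perturbation lemma (Lemma 3.2 of [Pow]) requires two systems of matrix units at small \emph{norm} distance. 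Your plan to choose unitaries $u_n\in\pi(\clb)$ with $\sum_n\|u_n-1\|<\infty$ presupposes exactly the norm estimates that are unavailable, so the argument is circular at its load-bearing step --- a difficulty you yourself name as ``the principal obstacle'' without resolving it. Note also that Powers' own technique cannot yield equality of image algebras: Corollary 3.8 of [Pow] produces a unitary $U$ and an automorphism $\beta$ with $U\pi(x)U^*=\pi'(\beta(x))$, i.e.\ unitary conjugacy of the images, and in general two irreducible representations generating $\clb(\clh)$ do \emph{not} have equal ranges. The shared maximal abelian subalgebra must therefore enter quantitatively, and your sketch uses it only descriptively (``with the diagonal frozen'').

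The paper's proof avoids norm perturbation altogether and shows how the common diagonal actually does the work. Using the trace-preserving conditional expectations $\IE_{\Lambda_1}$ (Lemma 3.2 of the paper), it first establishes the structural identity $\clb\bigcap (D_e^{\Lambda})' = D_e^{\Lambda}\vee\clb_{\Lambda'}$ for finite $\Lambda$, and then computes that $\pi(\clb^{\Lambda})''$ and $\pi'(\clb^{\Lambda})''$ coincide, since by hypotheses (a) and (b) both are determined by the common restriction to $D_e^{\Lambda}$. This equality of von Neumann algebras allows Powers' theorem to be applied at every finite stage to produce an \emph{exact} intertwining automorphism $\beta_{\Lambda}$ with $\pi'=\pi\beta_{\Lambda}$, which fixes $D_e^{\Lambda}$ pointwise by faithfulness (it preserves the centre $D_e^{\Lambda}$ of $\clb^{\Lambda}$). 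The single automorphism $\alpha$ is then obtained not as a norm-convergent telescoping product but by a compactness argument: along $\Lambda_n\uparrow\IZ$ the restrictions of the $\beta_{\Lambda_n}$ to each finite-dimensional $\clb_{\Lambda}$ lie in a compact set, so a Cantor diagonal extraction gives a limit automorphism on $\clb_{loc}$, extended to $\clb$ by continuity. To repair your proposal you would need to replace the Kaplansky-plus-perturbation inductive step by some such stagewise exact intertwining: the approximation has to take place at the level of automorphisms in the pointwise-norm topology, not at the level of matrix units in the operator norm.
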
 

\vsp 
\begin{proof} 
We fix any finite subset $\Lambda$ of $\IZ$ and consider $C^*$ sub-algebra $\IM^e_{\Lambda'} = \IM_{\Lambda'} \vee \ID^e=\IM_{\Lambda'} \vee \ID^e_{\Lambda}$, where $\Lambda'$ is the complementary set of $\Lambda$ in $\IZ$. 

\vsp 
We claim that 
\be 
\pi(\IM^e_{\Lambda'})''=\pi_0(\IM^e_{\Lambda'})''
\ee

\vsp 
We have shown in Lemma 4.1 (d) that $\IM^e_{\Lambda'}=(\ID^e_{\Lambda})'$. So $$\pi(\IM^e_{\Lambda'})''= \pi(\IM_{\Lambda'})'' \vee \pi(\ID^e_{\Lambda})''$$ 
Since $\pi(\IM_{\Lambda})''$ is a finite type-I factor of $\clb(\clh)$, we have $$\pi(\IM_{\Lambda})''=\pi(\IM_{\Lambda'})'$$ 
So 
$$\pi(\IM^e_{\Lambda'})'$$
$$=\pi(\IM_{\Lambda'})'\bigcap \pi(\ID^e_{\Lambda})'$$
$$=\pi(\IM_{\Lambda})'' \bigcap \pi(\ID^e_{\Lambda})'$$
$$= \pi(\ID^e_{\Lambda})''$$ 
Similarly 
$$\pi_0(\IM^e_{\Lambda'})'=\pi_0(\ID^e_{\Lambda})''$$ 

\vsp 
Since $\pi$ and $\pi_0$ agrees on $\ID^e$ and so in particular on $\ID^e_{\Lambda}$, we get the required equality in (41). Since $\IM^e_{\Lambda'}$ is a hyper-finite $C^*$ algebra 
equality in (41) holds, Theorem 3.7 in [Pow] says that there exists an automorphism $\beta_{\Lambda}$ on $\IM^e_{\Lambda'}$ such that 
$$\pi_0 = \pi  \beta_{\Lambda}$$ 
Since the automorphism $\beta_{\Lambda}$ preserves the centre of $\IM^e_{\Lambda'}$, which is equal to $\ID^e_{\Lambda}$, we get $\beta_{\Lambda}(\ID^e_{\Lambda})=\ID^e_{\Lambda}$. 

\vsp 
Furthermore, $\IM$ being a simple $C^*$-algebra, the representations $\pi$ and $\pi_0$ of $\IM$ is faithful. In particular, since $\pi$ and $\pi_0$ agrees on $\ID^e$, we get $$\pi(\beta_{\Lambda}(x))$$
$$=\pi_0(x)$$
$$=\pi(x)$$
for all $x \in \ID^e_{\Lambda}$. Thus by the faithful property of $\pi$, we get 
$$\beta_{\Lambda}(x)=x$$ 
for all $x \in \ID^e_{\Lambda}$. Furthermore, $\pi(\IM_{\Lambda})$ and $\pi_0(\IM_{\Lambda})$ are isomorphic finite $C^*$ algebra and thus we find an extension of $\beta_{\Lambda}:\IM^e_{\Lambda'} \raro \IM^e_{\Lambda'}$ to an automorphism, still denoted by the same symbol $\beta_{\Lambda}:\IM \raro \IM$ for which we have the following properties: 

\NI (a) $\beta_{\Lambda}(x)=x$ for all $x \in \ID^e_{\Lambda}$;

\NI (b) $\beta_{\Lambda}(\IM_{\Lambda})=\IM_{\Lambda}$ and $\beta_{\Lambda}=\beta_{\ul{z}_i}$ on $\IM_{\Lambda}$ for some $z_i \in S^1,\;i \in \Lambda$; 

\NI (c) $\pi_0 = \pi  \beta_{\Lambda}$ on $\IM$.   

\vsp 
We choose a sequence of finite subset $\Lambda_n \uparrow \IZ$ as $n \uparrow \infty$ in van Hove sense ( we may take the sequence to be $\Lambda_n = \{ k \in \IZ: -n \le k \le n \}$ ) and for a finite subset $\Lambda$ of $\IZ$, we consider the restrictions of 
$\beta_{\Lambda_n}$ to $\IM_{\Lambda_n} \raro \IM_{\Lambda_n}$ that satisfies (a) to (c). 

\vsp 
By (c), we get $\pi_0(x) = \pi \beta_{\Lambda_n}(x) = \pi \beta_{\Lambda_{n+1}}(x)$ for all 
$x \in \Lambda_n$. By faithful property of $\pi$, we get 
\be 
\beta_{\Lambda_n}(x)=\beta_{\Lambda_{n+1}}(x)
\ee 
for all $x \in \IM_{\Lambda_n}$. Thus we may set a map $\beta:\IM_{loc} \raro \IM_{loc}$ defined by 
$$\beta(x)=\beta_{\Lambda_n}(x)$$ 
for $x \in \IM_{\Lambda_n}$. The map $\beta$ is well defined and $*$-homomorphism from 
$\IM_{loc}$ onto $\IM_{loc}$ by the consistency relation (42). Thus it has a unique bounded extension still denoted by $\beta:\IM \raro \IM$ such that $\beta(\IM)=\IM$. That $\beta$ is injective follows by simple property of $\IM$ as the null space $\cln= \{x \in \IM: \beta(x)=0$ is a two sided ideal by $*$-homomorphism property of $\beta$. Thus $\beta$ is indeed an automorphism on $\IM$ satisfying $\pi=\pi_0 \beta$ on $\IM$ with $\beta(x)=x$ for all $x \in \ID^e$. This completes the proof. 

\end{proof}   

\section{ Appendix B} 

\vsp 
Here we give a sketch leaving details as it requires quite a different involved framework to prove 
our claim that the unique ground state $\omega_{XY}$ of $H_{XY}$ [AMa] model when restricted 
to one side of the chain $\IM_R$ ( or $\IM_L$ ) is faithful. We refer to Chapter 6 of the 
monograph [EvKa] for details on the mathematical set up for the Fermion algebra and what follows now
is based on [Ma]. Let $\clh$ 
be a complex separable infinite dimensional Hilbert space $(\clh=L^2(\!R))$ and consider the 
universal simple unital $C^*$-algebra $\cla_F$ generated by $\{a(f): f \in \clh \} $ over $\clh$, where 
$a:\clh \raro \cla_F$ is a conjugate linear map
satisfying 
$$a(f)a(g)+a(g)a(f)=0$$
$$a(f)a(g)^*+a(g)^*a(f)=\langle g,f \rangle 1$$ 
For an orthonormal basis $\{e_j:j \in \IZ \}$ for $\clh$ we fix a unitary  operator 
$U:e_j \raro e_j$ if $j \ge 1$ otherwise $-e_j$. The universal property of CAR algebra 
ensures an automorphism $\alpha_-$ on $\cla_F$ via the second quantization $a(f) \raro a(Uf)$ i.e. 
$a(e_j) \raro a(e_j)$ if $j \ge 1$ otherwise $a(e_j) \raro -a(e_j)$ for $j < 1,$ 
where $e_j(i)=\delta^i_j$ is the Dirac function on $j$. Let 
$\hat{\cla_F}= \cla_F \Join  \IZ_2 $ be the cross-product $C^*$-algebra. 

\vsp 
Now we consider Pauli's $C^*$-algebra $\cla_P=\otimes_{k \in \IZ}\!M_2(\IC)$ with grading 
$\alpha_-(\sigma^k_z)=\sigma^k_z,\alpha(\sigma^k_x)=-\sigma^k_x$ and $\alpha(\sigma^k_y)
=-\sigma^k_y$, where $\sigma_x,\sigma_y,\sigma_z$ are Pauli spin matrices if $k < 1$ and 
$\alpha_-(\sigma^k_w)=\sigma^k_w$ for all $w=x,y,z$ for $k \ge 1$. We consider the 
cross-product $C^*$-algebra $\hat{\cla}_P=\cla_P \times| \IZ_2$. Jordan-Wigner transformation 
map $J$ which takes 
$$\sigma_z^i \raro 2a(e_j)^*a(e_j)-1,\;\sigma_x^j \raro TS_j(a(e_j)+a(e_j)^*), \sigma_y^j \raro TS_ji(a(e_j)-a(e_j)^*)$$
, where $T=\otimes_{k \le 0} \sigma_z^k$ and $S_j=1$ if $j=1$, 
$S_j=\otimes_{1 \le k \le j-1}\sigma_z^k$ if $j > 1$ and 
$S_j=\otimes_{j \le k \le 0}\sigma_z^k$ if $j < 1$ 
identifies $\hat{\cla}_F$ with $\hat{\cla}_P$ as the Jordan map is 
co-variant with grading automorphism $\alpha_-$ and thus via this 
map we have also identified $\cla^+_P$ with $\cla^+_F$. There is a 
one to one affine correspondence between the set of even states of 
$\cla_P$ and $\cla_F$.  

\vsp 
$XY$ model $H_{XY}= -\sum_k \sigma^k_x\sigma^{k+1}_x + \sigma^k_y\sigma^{k+1}_y$ 
is an even Hamiltonian and the KMS state $\omega_{\beta}$ at inverse temperature 
being unique $\omega_{\beta}$ is also an even state. Thus the low temperature limiting 
state is also even and the ground state being unique it is also pure and translation 
invariant. The unique ground state $\omega_{XY}$ of $XY$ model [AMa] once restricted to 
$\cla^+_P$ can be identified as restriction of a quasi-free state on $\cla_F$ to $\cla^+_F$. 

\vsp 
Now by a lemma of Antony Wassermann [Wa] ( page 496 ) we have the following: Since the closed 
real subspace $\clk=\{f \in \clh: f(x) = \overline{f(x)} \}$ which we identify with the closed subspace 
generated by $\{e_j:j \ge 1 \}$ satisfies the condition  $\clk + i \clk $ dense in $\clh$ ( we have equality ) and 
$\clk \bigcap i \clk = \{0 \}$, we get $\Omega$ is also cyclic and separating for $\pi(\cla^+_F)''$ i.e. 
$[\pi(\cla^+_F)''\zeta_{\omega}]= [\pi(\cla_F)''\zeta_{\omega}]$, where $(\clh,\pi,\Omega)$ is the GNS 
space of $(\cla_F,\omega_{XY})$ and $\omega_{XY}$ is the unique quasi-free state associated 
with unique ground state via Jordan map. Now going back to $\cla^+_P$, we find $\omega_{XY}$ on
$\cla^+_P$ is faithful.

\bigskip
{\centerline {\bf REFERENCES}}

\begin{itemize} 
\bigskip 

\item{[Ac1]} Accardi, L.: The non-commutative Markov property. (Russian) Funkcional. Anal. i Priložen. 9 (1975), no. 1, 1-8.

\item{[Ac2]} Accardi, L.: Non-commutative Markov chains associated to a preassigned evolution: an application to the quantum theory of measurement. Adv. in Math. 29 (1978), no. 2, 226-243. 

\item{[AC]} Accardi, Luigi; Cecchini, Carlo: Conditional expectations in von Neumann algebras and a theorem of Takesaki.
J. Funct. Anal. 45 (1982), no. 2, 245–273. 

\item {[AM]} Accardi, L., Mohari, A.: Time reflected Markov processes. Infin. Dimens. Anal. Quantum Probab. Relat. Top., vol-2, no-3, 
397-425 (1999).

\item{[AMa]} Araki, H., Matsui, T.: Ground states of the XY model, Commun. Math. Phys. 101, 213-245 (1985).

\item{[Br]} Bratteli, Ola,: Inductive limits of finite dimensional $C^*$-algebras.
Trans. Amer. Math. Soc. 171 (1972), 195-234.

\item{[BE]} Bratteli, Ola, Evans, David E.: Dynamical semigroups commuting with compact abelian actions. Ergodic Theory Dynam. Systems 3 (1983), no. 2, 187-217.

\item{[BR]} Bratteli, Ola., Robinson, D.W. : Operator algebras and quantum statistical mechanics, I,II, Springer 1981.

\item{[BJP]} Bratteli, Ola; Jorgensen, Palle E. T.; Price, Geoffrey L.: Endomorphisms of B(H). Quantization, nonlinear partial differential equations, 
and operator algebra (Cambridge, MA, 1994), 93–138, Proc. Sympos. Pure Math., 59, Amer. Math. Soc., Providence, RI, 1996. 

\item{[ChE]} Choi, Man-Duen; Effros, Edward G.: Nuclear $C^*$-Algebras and the Approximation Property, American Journal of Mathematics, Vol. 100, No.1, pp. 61-79 (1978).

\item{[CS]} Connes, A.; Størmer, E.: Entropy of automorphisms of II$_1$ -von Neumann
algebras, Acta Math. 134 (1975), 289-306.

\item{[CNT]} Connes, A., Narnhofer, H. and Thirring, W.: Dynamical entropy of $C^*$-
algebras and von Neumann algebras, Commun. Math. Phys. 112 (1987), 691-719.

\item{[CFS]} Cornfeld, I. P. Fomin, S. V.; Sinaĭ, Ya. G. Ergodic theory. Translated from the Russian by A. B. Sosinskiĭ. Grundlehren der Mathematischen Wissenschaften [Fundamental Principles of Mathematical Sciences], 245. Springer-Verlag, New York, 1982.

\item{[Cun]} Cuntz, J.: Simple $C\sp*$-algebras generated by isometries. Comm. Math. Phys. 57, 
no. 2, 173--185 (1977).  

\item{[DJ]} Dutkay, Dorin Ervin; Jorgensen, Palle E. T.: Monic representations of the Cuntz algebra and Markov measures. J. Funct. Anal. 267 (2014), no. 4, 1011-1034. 

\item{[ES]} Engelbert, H.J., Shiryaev, A.N.: On absolute continuity and singularity of probability measures, Mathematical Statistic, Banach center publications, vol-6 Warsaw, 1980. 

\item {[EvKa]} Evans, D. E.; Kawahigashi,Y.: Quantum symmetries on operator algebras, 
Oxford University Press. 

\item{[FNW1]} Fannes, M., Nachtergaele, B., Werner, R.: Finitely correlated states on quantum spin chains,
Commun. Math. Phys. 144, 443-490(1992).

\item{[FNW2]} Fannes, M., Nachtergaele, B., Werner, R.: Finitely correlated pure states, J. Funct. Anal. 120, 511-
534 (1994).

\item{[Gl]} Glimm, James G.: On a certain class of operator algebras. Trans. Amer. Math. Soc. 95, 318-340 (1960).

\item {[Ka]} Kadison, Richard V.: A generalized Schwarz inequality and algebraic invariants for operator algebras,  Ann. of Math. (2)  56, 494-503 (1952). 

\item {[Kak]} Kakutani, S.: On Equivalence of Infinite Product measures, Ann. of Math (2). 49, 214-226, (1948), 
 
\item{[La]} Lance, E. C.: Ergodic theorems for convex sets and operator algebras.
Invent. Math. 37 (1976), no. 3, 201-214. 

\item{[Ma]} Matsui, T.: Private communication (2011).

\item {[Mac]} Mackey, G.W.: Imprimitivity for representations of locally compact gropups I, Proc. Nat. Acad. Sci. U.S.A. 35 (1949), 537-545. 

\item {[Mo1]} Mohari, A.: Markov shift in non-commutative probability, Jour. Func. Anal. 199 (2003) 189-209.  

\item {[Mo2]} Mohari, A.: Pure inductive limit state and Kolmogorov property, J. Func. Anal. vol 253, no-2, 584-604 (2007)
Elsevier Sciences.

\item {[Mo3]} Mohari, A.: Jones index of a completely positive map, Acta Applicandae Mathematicae. Vol 108, Number 3, 665-677 

\item {[Mo4]} Mohari, A.: Pure inductive limit state and Kolmogorov property. II 
Journal of Operator Theory. vol 72, issue 2, 387-404.   
    
\item {[Mo5]} Mohari, A.: Translation invariant pure state on $\otimes_{k \in \IZ}\!M^{(k)}_d(\IC)$ and Haag duality, Complex Anal. Oper. Theory 8 (2014), no. 3, 745-789.

\item{[Mo6]} Mohari, A.: Translation invariant pure state on $\clb=\otimes_{k \in \IZ}\!M^{(k)}_d(\IC)$ and its split property, J. Math. Phys. 56, 061701 (2015).

\item {[Mu]}  Murphy, G. J.: C$^*$ algebras and Operator theory. Academic press, San Diego 1990. 

\item {[Na]} Nachtergaele, B. Working with quantum Markov states and their classical analogues. Quantum probability and applications, V (Heidelberg, 1988), 267-285,
Lecture Notes in Math., 1442, Springer, Berlin, 1990.

\item {[NeS]} Neshveyev, S.: St\o rmer, E. : Dynamical entropy in operator algebras. Springer-Verlag, Berlin, 2006. 

\item {[Or1]} Ornstein, D. S.: Two Bernoulli shifts with infinite entropy are isomorphic. Advances in Math. 5 1970 339-348 (1970).  

\item{[Or2]} Ornstein, D. S.: A K-automorphism with no square root and Pinsker's conjecture, 
Advances in Math. 10, 89-102. (1973).

\item {[OP]} Ohya, M., Petz, D.: Quantum entropy and its use, Text and monograph in physics, Springer-Verlag 1995.

\item {[Pau]} Paulsen, V.: Completely bounded maps and operator algebras, Cambridge Studies in Advance Mathematics 78, Cambridge University Press. 2002

\item {[Pa]} Parry, W.: Topics in Ergodic Theory, Cambridge University Press, 1981. 

\item {[Pow]} Powers, R. T.: Representation of uniformly hyper-finite algebras and their associated von-Neumann rings, Ann. Math. 86 (1967), 138-171. 

\item {[Sak]} Sakai, S.: C$^*$-algebras and W$^*$-algebras, Springer 1971.  

\item {[Si]} Sinai, Ja.: On the concept of entropy for a dynamic system. (Russian) Dokl. Akad. Nauk SSSR 124 1959 768-771. 
 
\item {[Sim]} Simon, B.: The statistical mechanics of lattice gases, vol-1, Princeton series in physics (1993). 

\item {[St]} Stinespring, W. F.: Positive functions on $C^*$ algebras, Proc. Amer. Math. Soc. 6 (1955) 211-216. 
 
\item {[SS]} Sinclair, Allan M.; Smith, Roger R.: Finite von Neumann algebras and masas. London Mathematical Society Lecture Note Series, 351. Cambridge University Press, Cambridge, 2008.

\item{[St\o 1]} St\o rmer, E. : Symmetric states of infinite tensor products of $C^*$- algebras. J. Functional Analysis, 3, 48-68 (1969)

\item{[St\o 2]} St\o rmer, E.: A survey of non-commutative dynamical entropy. Classification of nuclear $C^*$-algebras. Entropy in operator algebras, 147-198, Encyclopaedia Math. Sci., 126, Springer, Berlin, 2002.

\item{[Ta1]} Takesaki, M.: Conditional Expectations in von Neumann Algebras, J. Funct. Anal., 9, pp. 306-321 (1972)
 
\item{[Ta2]} Takesaki, M. : Theory of Operator algebras II, Springer, 2001.
  
\item {[Wa]} Wassermann, A.: Operator algebras and conformal field theory, Invent. Math. 133, 467-538 (1998).

\end{itemize}

\end{document}